\newcolumntype{P}[1]{>{\centering\arraybackslash}p{#1}}
\newcolumntype{M}[1]{>{\centering\arraybackslash}m{#1}}
\newcommand{\done}{\item[\checkmark]}
\newcommand{\crossed}{\item[$\times$]}
\renewcommand{\qedsymbol}{\rule{0.7em}{0.7em}}
\newcommand{\into}{\hookrightarrow}
\newcommand{\damping}{\mathbf{D}_{0}^{T}\big(g(s); u_{t}\big)}
\newcommand{\dampinglin}{\mathbf{D}_{0}^{T}\big(s;u_{t}\big)}
\newcommand{\tl}{\tilde}
\newcommand{\sgn}{\operatorname{sgn}}
\newcommand{\intX}{\int_{\Omega}}
\newcommand{\ds}{\displaystyle}
\renewcommand{\phi}{\varphi}
\newcommand{\lam}{\lambda}
\newcommand{\cM}{\mathcal{M}}
\newcommand{\cO}{\mathcal{O}}
\newcommand{\bbR}{\mathbb{R}}
\newcommand{\dfn}{:=}
\newcommand{\dX}{\,{\scriptstyle dX }}
\newcommand{\bfD}{\mathbf{D}}
\newcommand{\bfC}{\mathbf{C}}
\newcommand{\del}{\delta}
\renewcommand{\ss}{\scriptstyle}
\newtheorem{corollary}{Corollary}[section]
\newtheorem{definition}[corollary]{Definition}
\newtheorem{lemma}[corollary]{Lemma}
\newtheorem{prp}[corollary]{Proposition}
\newtheorem{remark}[corollary]{Remark}
\newtheorem{thm}[corollary]{Theorem}
\newtheorem{asmp}[corollary]{Assumption}
\def\RR{{\rm I~\hspace{-1.15ex}R} }
\def\CC{\rm \hbox{C\kern-.56em\raise.4ex
		\hbox{$\scriptscriptstyle |$}\kern+0.5 em }}
\def\cM{{\cal M}}
\def\cO{{\cal O}}
\def\be{\begin{equation}}
\def\ee{\end{equation}}
\def\ds{\displaystyle}
\begin{document}

\begin{frontmatter}



\title{\bf Uniform decay rates for a suspension bridge with locally distributed nonlinear damping }

\author[Domingos Cavalcanti]{Andr\'e D. Domingos Cavalcanti}
\address[Domingos Cavalcanti]{Department of Engineering Chemistry,
	State University of Campinas, 13083-970,
	Campinas, SP, Brazil.}
\ead{andre.delano@hotmail.com}

\author[Cavalcanti]{Marcelo M. Cavalcanti\corref{cor1}\fnref{fn1}}
\address[Cavalcanti]{ Department of Mathematics, State University of
	Maring\'a, 87020-900, Maring\'a, PR, Brazil.}
\fntext[Cavalcanti]{Research of Marcelo M. Cavalcanti partially supported by the CNPq Grant
	300631/2003-0}
\ead{mmcavalcanti@uem.br}
\author[Correa,correa]{Wellington J. ~Corr\^ea}

\address[Correa]{ Academic Department of Mathematics, Federal Technological University of  Paran\'a, Campuses Campo Mour\~{a}o,  87301-899, Campo Mour\~{a}o, PR, Brazil.}
\fntext[correa]{Research of Wellington J. Corr\^{e}a partially supported by the CNPq Grant  438807/2018-9}

\ead{wcorrea@utfpr.edu.br}
\author[Hajjej]{Zayd Hajjej}
\address[Hajjej]{Department of Mathematics, Faculty of Sciences of Gabes, University of Gabes, 6029  Gabes, Tunisia. }
\ead{hajjej.zayd@gmail.com}

\author[ci2ma,Sepulv]{Mauricio Sep\'ulveda Cort\'es}
\address[ci2ma]{Centro de Investigaci\'on en Ingenier\'ia Matem\'atica (CI$^2$MA) \& Departamento de Ingenier\'ia matem\'atica (DIM), Universidad de Concepci\'on, Barrio Universitario, Concepci\'on, Chile.}
\fntext[Sepulv]{Research of Mauricio Sep\'ulveda C. was supported FONDECYT grant no. 1180868, and by CONICYT-Chile through the project AFB170001 of the PIA Program: Concurso Apoyo a Centros Cient\'ificos y
	Tecnol\'ogicos de Excelencia con Financiamiento Basal.}
\ead{mauricio@ing-mat.udec.cl}

\author[ci2ma,RVA]{Rodrigo V\'ejar Asem}
\fntext[RVA]{Rodrigo V\'ejar Asem, PhD student at University of Concepci\'on, acknowledges support by CONICYT-PCHA/Doctorado Nacional/2015-21150799.}
\ead{rodrigovejar@ing-mat.udec.cl}

\cortext[cor1]{Corresponding author}

\begin{abstract}
		We study a nonlocal evolution equation modeling the deformation of a bridge, either a footbridge or a suspension bridge. Contrarily to the previous literature we prove the asymptotic stability of the considered model with a minimum amount of damping which represents less cost of material. The result is also numerically proved.		\end{abstract}

\begin{keyword}
	Suspension bridge \sep  exponential asymptotic \sep  localized damping \sep wellposedness \sep  observability inequality.
	
	AMS Subject Classification: 	74K20 \sep	35Q99 \sep 	35B35



\end{keyword}

\end{frontmatter}
\tableofcontents






\section{Introduction}

\subsection{Statement of the problem and literature overview}

In the present paper, inspired by the works of Al-Gwaiz,  Benci, Ferrero, Gazzola et. al \cite{Gazzola1'}, \cite{Gazzola1},\cite{Gazzola2} (and references therein \cite{Berger}, \cite{Burgreen}, \cite{Knightly}, \cite{Mansfield}, \cite{Ventsel}, \cite{Villaggio}, \cite{Woinowsky-Krieger}) we consider a thin and narrow rectangular plate where the two short edges are hinged whereas the two long edges are free. This plate aims to represent the deck of a bridge, either a footbridge or a suspension bridge. In absence of forces, the plate lies flat horizontally and is represented by the planar domain $\Omega=(0,\pi)\times (-l, l)$ where $l<<\pi,$ with boundary $\Gamma$. Then, the nonlocal evolution equation modeling the deformation of the plate reads as follows:
\begin{equation}\small \label{main problem}
\hspace*{-.2cm}
\begin{cases}
u_{tt}(x,y,t)+\Delta^2 u(x,y,t) + \phi(u) u_{xx} + a(x,y)g(u_t(x,y,t))=h,\quad  \mbox{ in } \;\Omega \times (0,+\infty), \\\\

u(0,y,t)=u_{xx}(0,y,t)=u(\pi,y,t)=u_{xx}(\pi,y,t)=0, \quad \quad  \!\!\!\!(y,t)\in(-l,l)\times (0,+\infty), \\\\

u_{yy}(x,\pm l,t)+\sigma u_{xx}(x,\pm l, t)=0, \quad \qquad \qquad \qquad \qquad \quad (x,t)\in (0,\pi)\times (0, +\infty), \\\\

u_{yyy}(x,\pm l,t)+(2-\sigma)u_{xxy}(x,\pm l, t)=0,
\quad \qquad \qquad \qquad \!\! (x,t)\in(0,\pi)\times(0,+\infty),\\\\

u(x,y,0)=u_0(x,y),\; u_t(x,y,0)=u_1(x,y), \quad \qquad \qquad \qquad \qquad \quad \qquad \qquad \mbox{ in } \;\Omega,

\end{cases}
\end{equation}
where the nonlinear term $\phi$, which carries a nonlocal effect into the model, is defined by
\begin{eqnarray*}
	\phi(u)=-P + S \int_\Omega u_x^2\;dx,
\end{eqnarray*}
where the constant $\sigma$ is the Poisson ratio: for metals its value lies around $0.3$ while for concrete it is between $0.1$ and $0.2$. For this reason we shall assume that $0<\sigma <\frac{1}{2}$,  $a=a(x,y)\in L^{\infty}(\Omega)$ is assumed to be a nonnegative essentially bounded function such that $$a\geq a_0>0\;\;\hbox{ a.e. in } \omega,$$ for some non empty open subset $\omega$ around the boundary $\Gamma$ of $\Omega$ and some positive constant $a_0>0$ and the function $g$ verifies such conditions that be announced in the third section. \\
Here $S>0$ depends on the elasticity of the material composing the deck, the term $S \displaystyle{\int_\Omega} u_x^2\;dx$ measures the geometric nonlinearity of the plate due to its stretching, and $P>0$ is the prestressing constant: one has $P>0$ if the plate is compressed and $P<0$ if the plate is stretched. The function $h$ represents the vertical load over the deck and may depend on time.\\  Early results concerning suspension bridges go back \cite{Glover} and for rigid suspension bridges it is worth mentioning \cite{Lazer}. Mckenna and Walter \cite{McKenna}, \cite{McKenna2} investigated the nonlinear oscillations of suspension bridges and the existence of traveling wave solutions have been established. To achieve this, they considered the suspension bridge as a vibrating beam as in the present paper.  {\color{black} Recently, there has been a lot of work on the bridge configuration [\cite{Gazzola1'}, \cite{Gazzola1},\cite{Gazzola2}] with this type of Berger's nonlinearity. The key feature in the present paper is the localized damping and the rectangular geometry. Also, the nonlinearity acts as a beam (only in the span direction), but the model does allow for dynamics in the torsional sense.}

We start talking about the resonance phenomenon in bridges and buildings and the importance of dampers to prevent dangers and faults in constructions's structures . Resonance is the reinforcement or prolongation of sound by reflection from surface or by the synchronous vibration of a neighboring object. In simpler terms, the conditions which the frequency of a wave equals the resonant frequency of the waves medium. Mechanical resonance occurs when there is transfer of energy from one object to another with the same natural or resonant frequency. Strong vibrations can cause lots of damage to structures and can be used to break materials apart. The main reason for the Tacoma Narrows Bridge collapse (see figure \ref{tacoma}) was the sudden transition from longitudinal to torsional oscillations caused by resonance phenomenon. Several other bridges collapsed for the same reason (see \cite{Amman}, \cite{Scott}). In \cite{Gazzola2} the authors analyze in detail how a solution of (\ref{main problem}) initially oscillating in an almost purely longitudinal fashion can suddenly start oscillating in a torsional fashion, even without the interaction of external forces, that is, when $h=0$. For this reason we shall consider $h=0$ in the present manuscript. {\color{black} As a matter of fact, although the collapse Tacoma bridge's is a matter of debate until now, the resonance phenomenon causes irreparable damages in constructions and the presence of dampers play an essential role in stabilizing bridges and other constructions whatever the reason which the bridge's collapse.}

\begin{figure}[h!]\begin{center}
		\includegraphics[scale=0.5]{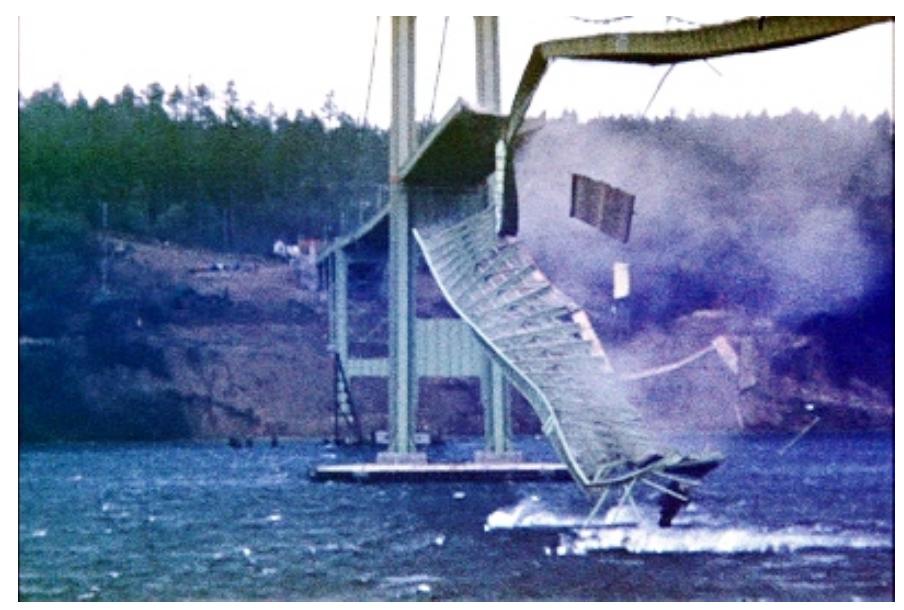}
		\caption{Tacoma Narrows Bridge collapse}
		\label{tacoma}
	\end{center}
\end{figure}

To limit unwanted vibrations and preventing structures from resonating with frequencies during earthquakes, features and modifications such as {\em dampers} are designed to help us in that way (see figure \ref{damper}). They help save buildings or bridges from damage costs, and lives of people. Understanding how vibrations work can help us prevent dangers and faults in structures and natural disasters. Over the year engineers have discovered ways and have made design modifications to bridges and buildings to help limit undesired vibrations. This helps structures from shaking too much and causing them to be unsafe or from collapsing due to strong natural forces. One way to limit vibrations include the use of dampers. Damping is the reduction in the amplitude of a wave as a result of energy absorption destructive interference. Seismic dampers are a type of dampers and are mechanical devices to dissipate kinetic energy of seismic waves penetrating a building or a bridge structure. Tune dampers are another kind of dampers, also known as a harmonic absorber, is a device mounted in structures to reduce the amplitude of mechanical vibrations. Their application can prevent discomfort, damage, or outright structural failure. They are frequently used in power transmission, bridges and buildings. {\color{black} From the mathematical and physical point of view the damping term above mentioned is represented by the term $a(x,y) g(u_t)$ where the function $a(x,y)$, assumed to be non negative, is effectively responsible by the location where the nonlinear damping $g(u_t)$ acts on the structure, that is, $a(x,y) >0$ in a neighbourhood $\omega$ around the boundary $\Gamma$ of $\Omega$ where the damping term is effective and $a(x,y)=0$ in $\Omega\backslash \omega$ so that no mechanism of damping is acting in the structure.}

\begin{figure}[h!]\begin{center}
		\includegraphics[scale=0.3]{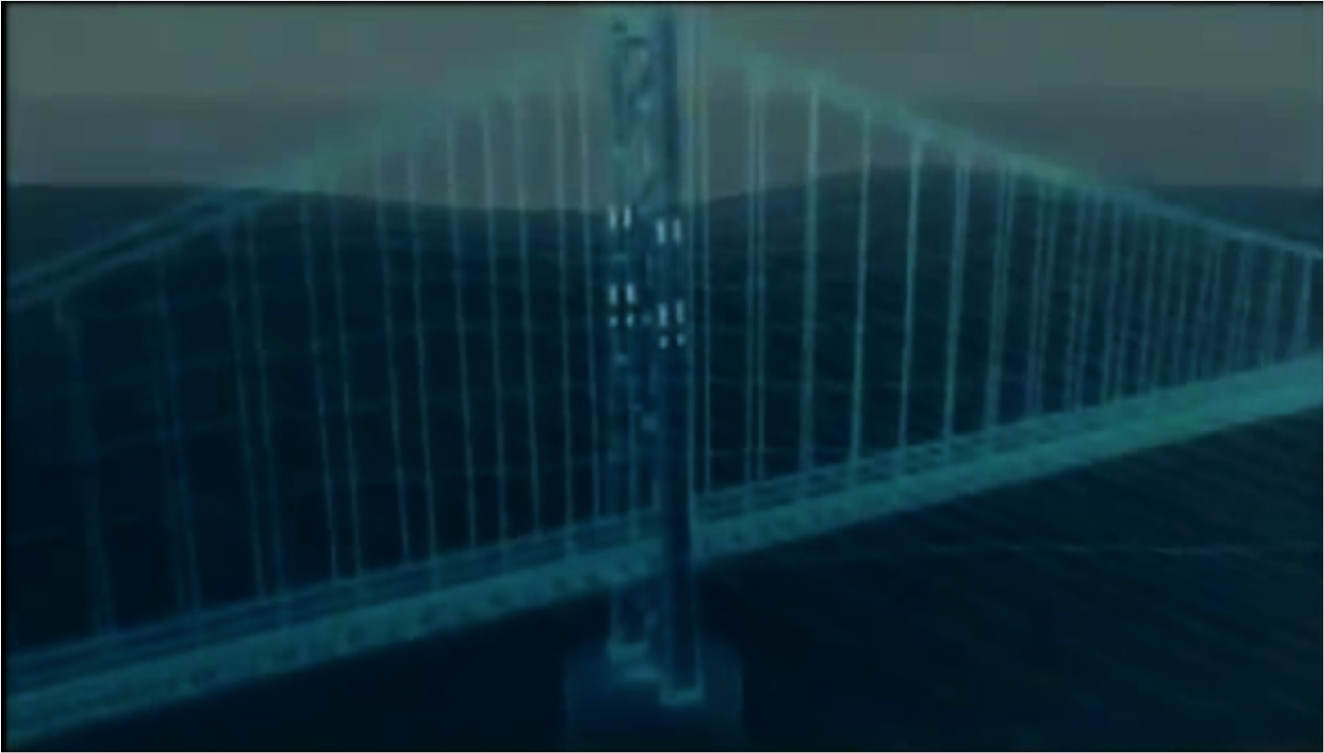}\quad
		\includegraphics[scale=0.3]{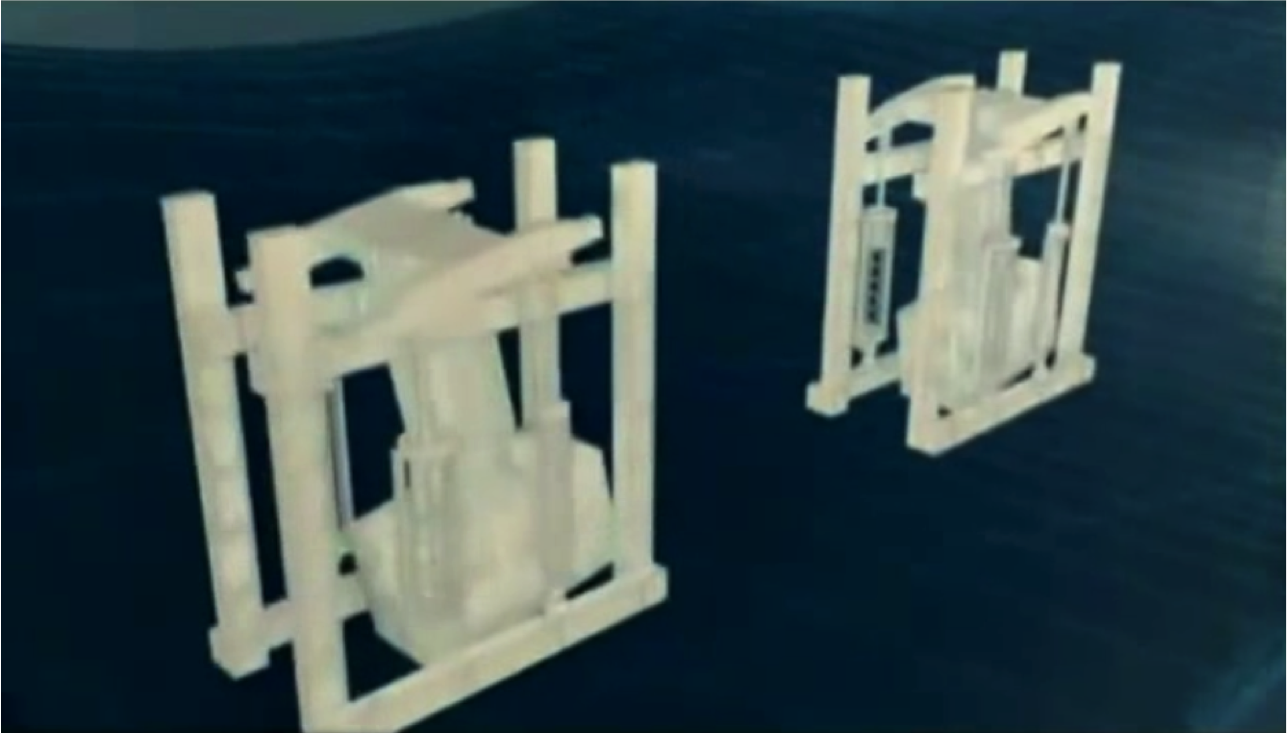}
		\caption{Dampers prevent sudden transition from longitudinal to torsional oscillations caused by resonance phenomenon}
		\label{damper}
	\end{center}
\end{figure}

\subsection{Contribution of the present article}

The main goal of the present article is to establish uniform decay rates estimates to problem (\ref{main problem}) with a minimum amount of damping which represents less cost of material.  {\color{black} This minimum refers a small `collar' $\omega$ around the whole boundary $\Gamma$ of $\Omega$.  In addition, the nonlinear feedback $a(x,y) g(u_t)$ can be superlinear, sublinear or linearly bounded at infinity according to the terminology given in (\ref{i:order})}. \cite{Bochicchio} considered a similar model as in (\ref{main problem}) where a full damping is in place and they established a well-posedness result as well as the existence of a global attractor.  \cite{Messaoudi} reformulate (\ref{main problem}), with a different kind of nonlinearity, into a semigroup setting and then make use of the semigroup theory to establish the well-posedness. They also use the multiplier method to prove an exponential stability result to problem (\ref{main problem}) when also a full damping is in place, namely, when $\delta(x,y)=\delta >0$. More recently,   \cite{Gazzola2} study the same nonlocal evolution equation (with $\delta(x,y)=\delta >0$) and prove existence, uniqueness and asymptotic behavior for the solutions for all initial data in suitable functional spaces. Further, the authors prove results on the stability/instability of simple models motivated by a phenomenon which is visible in actual bridges and they complement their study with some numerical experiments.

As far as we are concerned there are few papers which deal with the asymptotic dynamics to problem (\ref{main problem}) and the present paper seems to be the pioneer in investigating the asymptotic stability to problem (\ref{main problem}) with a nonlinear damping locally distributed just around a neighbourhood $\omega$ of the boundary $\Gamma$ of $\Omega$. First we prove the observability inequality associated to the linear model without damping.  For this purpose we make use of the multiplier method see, for instance, \cite{Komornik}, \cite{Lions}, usually adopted for plates and beam equations, now adjusted to the present model, which brings new difficulties to be overcome {\color{black} because of the `hard terms' which come from the boundary conditions and mainly due to the lack of an unique continuation principle for domains with non smooth boundary}. Second, exploiting the observability inequality above mentioned we deduce uniform decay rate estimates of the energy correspondent nonlinear model. For the nonlinear model we borrow ideas firstly introduced in  \cite{Tucsnak} mainly how to be succeed in using a unique continuation property due to  \cite{Kim}. However, due to the shape of our domain (non smooth boundary) new difficulties appear which were overcome by using of geometric tools. {\color{black} Indeed, when a full damping is in place, as have been considered previously in the literature so far, the multipliers used to derive decay rate estimates are easier to be controlled and no unique continuation principle is required. However, when one has a nonlinear damping locally distributed in a `collar' of the boundary, we need to consider non radial multipliers as previously considered by \cite{Tucsnak} for the nonlinear beam equation subject to boundary conditions $u=\partial_\nu u=0 $ on $\Gamma \times \mathbb{R}_+$ where $\Gamma$ is smooth. But the boundary conditions concerned to the present article are very complicated to the handled from the technical point of view. Furthermore, definitively one the major difficulty found in the present article was to extend the unique continuation principle proved in \cite{Kim} for domains with smooth boundary to the present case where the boundary contains corners. The strategy used to overcome this difficulty is to consider a sequence of sub domains $\Omega_{\epsilon_n}$ with smooth boundary where the unique continuation principle holds and since $\Omega_{\epsilon_n}$ converges to $\Omega$ uniformly when $\epsilon_n$ converges to zero the unique continuation principle remains valid for the rectangle $\Omega$ ( see figure \ref{fig 5}). We believe that the strategy used in the present article will be useful for other models in which the boundary is not necessarily smooth. Summarizing, the main contribution of the present work is represented by the technical challenges induced by the configuration ({\em free boundary condition}) and the {\em rectangular geometry} of the problem.} The second section of the present paper is devoted to the linear case while the third one we analyze the nonlinear model. {\color{black} The fourth section proves the energy decay estimates. The fifth section replicates the second main contribution of this paper using a finite difference scheme, whose main advantage relies on a practical and simple way to implement it using a matrix-based programming language like \texttt{MATLAB}. The last section is an appendix containing the geometric tools needed for the proof.}


\section{The linear Model}

\subsection{Notation and Preliminary Results}

We  consider the following system

\begin{equation}\small\label{1}\hspace*{-.1cm}
\begin{cases}
u_{tt}(x,y,t)+\Delta^2 u(x,y,t)= 0, &\mbox{ in }\Omega \times (0,+\infty), \\\\

u(0,y,t)=u_{xx}(0,y,t)=u(\pi,y,t)=u_{xx}(\pi,y,t)=0, &(y,t)\in(-l,l)\times (0,+\infty), \\\\

u_{yy}(x,\pm l,t)+\sigma u_{xx}(x,\pm l, t)=0, &(x,t)\in (0,\pi)\times (0, +\infty), \\\\

u_{yyy}(x,\pm l,t)+(2-\sigma)u_{xxy}(x,\pm l, t)=0, &(x,t)\in(0,\pi)\times(0,+\infty),\\\\

u(x,y,0)=u_0(x,y),\; u_t(x,y,0)=u_1(x,y), &\mbox{ in } \;\Omega.
\end{cases}
\end{equation}

We introduce the space
$$ H^2_*(\Omega)=\{w\in H^2(\Omega): w=0 \;{\text on}\; \{0,\pi\}\times (-l, l)\},$$
together with the inner product
$$(u,v)_{H^2_*}=\int_{\Omega} F(u,v) dx dy,$$
where
\begin{equation}\label{01}
F(u,v)=u_{xx}v_{xx}+u_{yy}v_{yy}+\sigma (u_{xx}v_{yy}+u_{yy}v_{xx})+2(1-\sigma)u_{xy}v_{xy}.
\end{equation}
It is well known that $(H^2_*(\Omega), (\cdot, \cdot)_{H^2_*})$ is a Hilbert space, and the norm $\Vert . \Vert^2_{H^2_*} $ is equivalent
to the usual $H^2$ norm (see \cite{Gazzola1}).\\

Introducing the following notation
\begin{eqnarray}\label{001}
\left\{
\begin{array}{lcr}
u_{xx}(0,y)=u_{xx}(\pi,y)=0, \\

u_{yy}(x,\pm l)+\sigma u_{xx}(x,\pm l)=0, \\

u_{yyy}(x,\pm l)+(2-\sigma)u_{xxy}(x,\pm l)=0.
\end{array}
\right.
\end{eqnarray}

We have
\begin{lemma}[\cite{Messaoudi}]
	\begin{equation}\label{02}
	(\Delta^2 u, v)_{L^2(\Omega)}= (u,v)_{H^2_*(\Omega)}=\int_\Omega F(u, u)\;dx\;dy,
	\end{equation}
	$\forall\;u\in H^4(\Omega)\cap H^2_*(\Omega)\;\;{\text satisfying}\;\;(\ref{001}), \;{\text and}\; v\in H^2_*(\Omega).$
\end{lemma}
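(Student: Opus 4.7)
The plan is to prove the identity by applying Green's second identity for the biharmonic operator and then reconciling the resulting $\int_\Omega \Delta u\,\Delta v\,dx\,dy$ with the bilinear form $\int_\Omega F(u,v)\,dx\,dy$. Concretely, I would first write
\begin{equation*}
\int_\Omega \Delta^2 u\cdot v\,dx\,dy = \int_\Omega \Delta u\,\Delta v\,dx\,dy + \int_\Gamma \bigl[(\partial_\nu \Delta u)\,v - \Delta u\,\partial_\nu v\bigr]\,d\Gamma,
\end{equation*}
which comes from two successive applications of the divergence theorem and is valid because $u\in H^4(\Omega)$ and $v\in H^2_*(\Omega)\subset H^2(\Omega)$.

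Next I would dispose of the boundary contributions by splitting $\Gamma$ into the short edges $\{0,\pi\}\times(-l,l)$ and the long edges $(0,\pi)\times\{\pm l\}$. On the short edges the membership $v\in H^2_*(\Omega)$ forces $v\equiv 0$, and differentiating this vanishing in the tangential $y$-direction gives $v_{yy}=0$ there; combined with $u_{xx}=0$ and $u\equiv 0$ in $y$ (so $u_{yy}=0$) one reads off $\Delta u\equiv 0$ on these edges, hence both $(\partial_\nu \Delta u)\,v$ and $\Delta u\,\partial_\nu v$ vanish pointwise. On the long edges I would use the free boundary conditions \eqref{001} to rewrite $\Delta u=(1-\sigma)u_{xx}$ and $\partial_\nu(\Delta u)=\pm(u_{xxy}+u_{yyy})=\mp(1-\sigma)u_{xxy}$, reducing the boundary contribution there to
\begin{equation*}
\mp(1-\sigma)\int_0^\pi \bigl[u_{xxy}\,v + u_{xx}\,v_y\bigr]\,dx \qquad \text{on } y=\pm l .
\end{equation*}

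To finish, I would exploit the pointwise identity
\begin{equation*}
\Delta u\,\Delta v = F(u,v) + (1-\sigma)\bigl[u_{xx}v_{yy}+u_{yy}v_{xx}-2u_{xy}v_{xy}\bigr],
\end{equation*}
whose verification is a direct expansion against the definition \eqref{01} of $F$. The bracket on the right is a Hessian null-Lagrangian and admits the divergence form
\begin{equation*}
u_{xx}v_{yy}+u_{yy}v_{xx}-2u_{xy}v_{xy} = \partial_x\bigl(u_{xx}v_y-u_{xy}v_x\bigr)+\partial_y\bigl(u_{yy}v_x-u_{xy}v_y\bigr),
\end{equation*}
so integrating it over $\Omega$ turns it into a boundary integral over $\Gamma$. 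I would then check edge by edge that this new boundary integral, scaled by $(1-\sigma)$, exactly cancels the residual produced above from the long edges, using again $v=0$ (so $v_x=0$) on the short edges and the Kirchhoff-type free conditions on the long edges. What remains is precisely $\int_\Omega F(u,v)\,dx\,dy=(u,v)_{H^2_*(\Omega)}$, giving the claim.

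The technical heart of the argument, and the main obstacle, is the bookkeeping of boundary traces at the four edges and especially near the corners of the rectangle: unlike the clamped case where Dirichlet conditions kill every boundary contribution outright, here the free conditions on the long edges conspire with the Navier-type conditions on the short edges in a delicate way, and the cancellation only materialises once the Poisson ratio $\sigma$ multiplies both residuals identically. The fact that the corners lie in $H^2_*(\Omega)$-traces (rather than in smoother spaces) is harmless because all manipulations are integrations of $L^1$-densities against $L^2$-boundary data, but one has to argue this by approximation of $v$ by smooth functions vanishing on the short edges.
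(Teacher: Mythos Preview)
The paper does not prove this lemma at all; it merely cites \cite{Messaoudi} and moves on. So there is no ``paper's own proof'' to compare against, and your proposal is in fact supplying the missing argument.

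Your approach is essentially correct and is the standard way to establish such Kirchhoff-plate integration-by-parts identities. One small slip: the divergence form you wrote,
\[
\partial_x\bigl(u_{xx}v_y-u_{xy}v_x\bigr)+\partial_y\bigl(u_{yy}v_x-u_{xy}v_y\bigr),
\]
does not reduce to $u_{xx}v_{yy}+u_{yy}v_{xx}-2u_{xy}v_{xy}$ because third derivatives of $u$ survive. The correct identity swaps the outer derivatives,
\[
u_{xx}v_{yy}+u_{yy}v_{xx}-2u_{xy}v_{xy}=\partial_y\bigl(u_{xx}v_y-u_{xy}v_x\bigr)+\partial_x\bigl(u_{yy}v_x-u_{xy}v_y\bigr),
\]
and with this fix the boundary bookkeeping goes through exactly as you describe: on the short edges $v=0$ and $v_y=0$ kill the $\partial_x$-flux, and on the long edges the $\partial_y$-flux produces precisely $(1-\sigma)\int_0^\pi[u_{xx}v_y-u_{xy}v_x]\,dx$, which after one tangential integration by parts in $x$ (using $v=0$ at $x=0,\pi$) cancels the residual you found. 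Note also the harmless typo in the displayed statement: the final integrand should be $F(u,v)$, not $F(u,u)$.
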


Problem (\ref{1}) can be written
\begin{eqnarray*}
	\left\{
	\begin{aligned}
		& U_t + A U =0,\\
		& U(0)=U_0,
	\end{aligned}
	\right.
\end{eqnarray*}
where
\begin{eqnarray*}
	U=\left(
	\begin{aligned}
		&u\\
		&v
	\end{aligned}
	\right);\quad
	A U:=\left(
	\begin{aligned}
		& -v\\
		& \Delta^2u
	\end{aligned}
	\right);\quad
	U_0=\left(
	\begin{aligned}
		&u_0\\
		&v_0
	\end{aligned}
	\right).
\end{eqnarray*}

We define the Hilbert space $\mathcal{H}:= H_*^2(\Omega)\times L^2(\Omega)$ endowed with the inner product
\begin{eqnarray*}
	(U,V)_{\mathcal{H}}=(u,\tilde u)_{H^2_*(\Omega)} + (v, \tilde v)_{L^2(\Omega)},
\end{eqnarray*}
where $U=(u,v)^T$; $V=(\tilde u, \tilde v)^T\in \mathcal{H}$. The domain of the operator $A$ is defined by
\begin{eqnarray*}
	D(A):= \{(u,v)\in \mathcal{H}: u\in H^4(\Omega) \hbox{ satisfying } \eqref{001}, v\in H^2_*(\Omega) \}.
\end{eqnarray*}

The wellposedness of problem (\ref{1}) can be  studied  as in \cite{Messaoudi}. Indeed, let $U_0 \in \mathcal{H}$ given, Then as in  \cite{Messaoudi} (Theorem 3.1) problem (\ref{1}) possesses a unique solution $U\in C([0,\infty);\mathcal{H})$. In addition, if $U_0  \in D(A)$, then problem (\ref{1}) has a unique regular solution $U\in C([0+\infty);D(A)) \cap C^1([0,\infty);\mathcal{H})$.

\subsection{Observability Inequality}

We define the energy of solutions of system (\ref{1}) by:
$$E(t)=\frac{1}{2} \Vert u_t (t)\Vert^2_{L^2}+\frac{1}{2} \Vert u (t)\Vert^2_{H^2_*}, ~t\geq 0.$$

The following identity holds
$$\frac{d E(t)}{dt}= 0,~\hbox{ for all }t\geq 0,$$
from which we deduce the identity of the energy
\begin{eqnarray}\label{Linear Idend Ener}
E(t)=E(0), \hbox{ for all }t\geq 0.
\end{eqnarray}

The aim of this section is to give sufficient conditions on  $\omega$ ensuring the observability inequality holds
for every solution of (\ref{1}), when $\omega$ is a neighbourhood of the boundary $\Gamma$.\\
Namely, it suffices to prove the existence of a positive constants $C, T_0$ such that
\begin{eqnarray}\label{3}
E(0)\leq C\int_0^T\int_{\Omega} \chi_\omega \vert u_t(x,y,t)\vert^2 dx\;dy\;dt, ~\forall\; T\geq T_0,
\end{eqnarray}
where $\chi_\omega$ represents the characteristic function of $\omega$. We have the following theorem:
\begin{thm}\label{Teo1}
	For any $L>0$ there exist positive constants $C$ and $T_0$, such that, if $E(0)\leq L$ then (\ref{3}) holds true.
\end{thm}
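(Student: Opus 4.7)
The plan is to apply the multiplier method tailored to the hinged/free rectangular plate, and then to absorb the lower-order terms by a compactness--uniqueness argument, with the closing step resting on the unique continuation principle of Kim extended to the rectangle via the corner-regularisation scheme advertised in the introduction.

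First, I would use a Rellich-type multiplier $m\cdot\nabla u$ with $m(x,y) = (x-\pi/2,\,y)$, multiplied by a smooth cut-off $\psi$ that equals $1$ on $\Omega\setminus\omega$ and is supported away from the damping region's outer boundary. Multiplying the equation $u_{tt}+\Delta^{2}u=0$ by $\psi\,m\cdot\nabla u$ and integrating over $\Omega\times(0,T)$, using the Green identity (\ref{02}) together with the energy conservation $E(t)\equiv E(0)$, I would derive an inequality of the form
$$
T\,E(0)\;\le\;C\int_{0}^{T}\!\!\int_{\omega}|u_t|^{2}\,dx\,dy\,dt\;+\;(\text{boundary terms})\;+\;(\text{lower-order terms}).
$$

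Second, the boundary contributions are delicate: the hinged edges $x=0,\pi$ are handled by standard computations that kill most traces, but the free edges $y=\pm l$ carry the coupled conditions (\ref{001}) involving $\sigma$, and the four corners of $\Omega$ can, a priori, generate singular contributions. The right object to work with is the symmetric form $F(u,\,m\cdot\nabla u)$ rather than $\Delta u\,\Delta(m\cdot\nabla u)$, so that the free-edge conditions align with the natural integration by parts associated with the inner product on $H^{2}_{*}(\Omega)$; arranging the cut-off $\psi$ so that its support meets $\Gamma$ only inside $\omega$ then pushes the surviving boundary traces into the observation region, where they are dominated by the damping term. This reduces the proof to an estimate
$$
T\,E(0)\;\le\;C\,\|u_t\|_{L^{2}(\omega\times(0,T))}^{2}\;+\;C'\,\|u\|_{L^{2}(0,T;H^{s}(\Omega))}^{2}
$$
for some $s<2$.

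Third, I would remove the lower-order term by a contradiction-compactness argument. Assuming (\ref{3}) fails, pick a sequence $(u^{n})$ of solutions with $E^{n}(0)=1$ (which is consistent with the hypothesis $E(0)\le L$, used precisely to keep the sequence in a fixed ball and produce a uniform $C$) and $\|u^{n}_{t}\|_{L^{2}(\omega\times(0,T))}\to 0$. Using the compactness of $H^{2}_{*}(\Omega)\hookrightarrow H^{s}(\Omega)$ and energy conservation, a subsequence converges to a finite-energy solution $u^{\infty}$ of (\ref{1}) satisfying $u^{\infty}_{t}\equiv 0$ on $\omega\times(0,T)$. Differentiating the plate equation in $t$ and invoking Kim's unique continuation property on the smooth subdomains $\Omega_{\epsilon_{n}}$ that exhaust $\Omega$ (as pictured in the introduction), I can pass to the limit $\epsilon_{n}\to 0$ to conclude $u^{\infty}\equiv 0$, contradicting $E^{\infty}(0)=1$. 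The main obstacle I expect is precisely the boundary bookkeeping in the second step: the free boundary conditions mix tangential and normal second derivatives through $\sigma$, and the corner points of the rectangle threaten to produce boundary terms that the classical smooth-domain Rellich identity does not handle cleanly, so the algebraic reorganisation via $F(\cdot,\cdot)$ together with the smooth-subdomain approximation will do most of the work.
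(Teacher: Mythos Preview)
Your overall architecture (Rellich multiplier plus compactness--uniqueness) is the right one, but there is a genuine gap in the multiplier step. With your cutoff $\psi$ equal to $1$ on $\Omega\setminus\omega$ and vanishing near $\Gamma$, the identity for $\psi\,m\cdot\nabla u$ indeed carries no boundary traces; what it \emph{does} carry are interior commutator terms, supported in $\operatorname{supp}(\nabla\psi)\subset\omega$, containing $|\Delta u|^{2}$ and products like $\Delta u\cdot\nabla u$. These are at energy level, not lower order, and are not ``dominated by the damping term'' $\int_\omega|u_t|^{2}$ as you claim. One still needs a local equipartition step showing $\int_{0}^{T}\!\int_{\omega'}F(u,u)\le C\int_{0}^{T}\!\int_{\omega}|u_t|^{2}+\text{l.o.t.}$ for a slightly smaller collar $\omega'$. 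The paper does this (its Step~2) by multiplying by $\eta u$ for a second cutoff $\eta$; the cross terms like $\int\Delta u\,\eta_{x}u_{x}$ can only be absorbed via Young's inequality if $|\nabla\eta|^{2}/\eta$ and $|\nabla^{2}\eta|^{2}/\eta$ stay bounded near the zero set of $\eta$, which forces a careful construction $\eta\sim\mathrm{dist}^{4}(\cdot,\partial V)$ carried out in a geometric appendix. Your sketch omits this mechanism entirely.

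Two further points. First, the paper handles the complicated free-edge and corner boundary terms by a different device from yours: it applies the Lions identity \emph{twice}, once with $q=m$ and once with $q=\psi m$ where $\psi\equiv 1$ on $\Gamma$ (not $0$); since the two boundary integrals are then identical, combining the identities cancels them exactly, without any algebraic reorganisation via $F(u,m\cdot\nabla u)$ and without ever confronting the corners. Second, your contradiction ``$E^{\infty}(0)=1$'' does not follow, since the energy is not preserved under weak limits; the actual contradiction comes from feeding the limit back into the preliminary inequality (the paper arranges this via a two-case split with a second normalisation by the lower-order norm). For the linear equation Holmgren's theorem already supplies the unique continuation, so Kim's result and the subdomain approximation $\Omega_{\epsilon_n}$ are unnecessary here; they are invoked only in the nonlinear section.
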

\begin{remark}\label{remark 1}
	It is worth mentioning that we could carry in the linear problem (\ref{1}), as well as in Theorem \ref{Teo1}, the linear part $- P u_{xx}$ of the nonlinear term $\phi(u)u_{xx}:= - P u_{xx} + S \displaystyle{\int_\Omega} u_x^2\,dx \,u_{xx}$ since it does not affect the proof given in the sequel. For simplicity we decided to remove it.
\end{remark}
\noindent{\bf Proof:}
	The proof of Theorem \ref{Teo1} consists of three steps.\\
	\underline{Step 1}\;\; We shall work with regular solutions and by standard density arguments the inequality (\ref{3})
	remains valid for weak solutions as well. Let us multiply the first equation in (\ref{1}) by  $q(x,y)\,\cdot\,\nabla u(x,y,t)$ where $q\in (W^{2,\infty})^2$ (we denote by $\cdot$ the scaler product in $\mathbb{R}^2$).\\
	Following the integrations by parts of Lemma 3.3 (see \cite{Lions} p. 244, see also \cite{Tucsnak}) adapted to the present case, we obtain:
{\small	\begin{eqnarray}
&&		\hspace*{-.2cm}	\left[ \int_\Omega (u_t q\,\cdot\,\nabla u)dx\;dy \right]^T_0 +\frac{1}{2}\int_Q div(q)  \vert u_t\vert^2\; dx\;dy\;dt +\int_Q (\Delta q\,\cdot\,\nabla u)\Delta u\;dx\;dy\;dt\nonumber\\
	&&		+2\sum_{j,k} \int_Q \frac{\partial {q_k}}{\partial x_k} \Delta u \frac{\partial^2 u}{\partial x_k \partial x_j} \; dx\;dy\; dt -\frac{1}{2}\int_Q div(q)  \vert\Delta u\vert^2\; dx\;dy\;dt\nonumber\\
	&&		=\frac{1}{2}\int_{\Sigma} (q\,\cdot\,\nu) \vert u_t\vert^2 d\Gamma\; dt -\frac{1}{2}\int_{\Sigma} (q\,\cdot\,\nu) \vert\Delta u\vert^2\;d\Gamma\;dt-\int_{\Sigma} \partial_\nu \Delta u (q\,\cdot\,\nabla u)  d\Gamma\; dt \nonumber\\
	&&	+\sum_{k=1}^{2} \int_{\Sigma} \partial_\nu q_k \Delta u \frac{\partial u}{\partial x_k}  d\Gamma\; dt + \int_{\Sigma}\Delta u (q\,\cdot\,\partial_\nu \nabla u)  d\Gamma\; dt\,.\label{4}
	\end{eqnarray}}
	where $Q=\Omega\times (0,T)$ and $\Sigma=\Gamma\times (0,T)$.\\
	
	Applying identity (\ref{4}) with $q(x,y)=m(x,y)=X-X_0$ for some $X_0\in\mathbb{R}^2$, where $X=(x,y)$, we obtain
{\small\hspace*{-.5cm}	\begin{eqnarray}
	&&	\!\!\!	\left[ \int_\Omega (u_t m\,\cdot\,\nabla u)dx\;dy \right]^T_0 +\int_Q  \vert u_t\vert^2\; dx\;dy\;dt + \int_Q  \vert \Delta u\vert^2 \; dx\;dy\; dt\nonumber\\ 
	&&		=\frac{1}{2}\int_{\Sigma} (m\,\cdot\,\nu) \vert u_t\vert^2\; d\Gamma\; dt -\frac{1}{2}\int_{\Sigma} (m\,\cdot\,\nu) \vert\Delta u\vert^2\;d\Gamma\;dt-\int_{\Sigma} \partial_\nu \Delta u (m\,\cdot\,\nabla u)\  d\Gamma\ dt \nonumber\\
	&& 	+\int_{\Sigma} \partial_\nu u\; \Delta u\ d\Gamma\; dt + \int_{\Sigma}\Delta u (m\,\cdot\,\partial_\nu \nabla u)\  d\Gamma\  dt,\label{5}
	\end{eqnarray}}
	where  we used that\\	
	\begin{eqnarray*}
		\displaystyle{\sum_{k=1}^{2}} \int_{\Sigma} \partial_\nu m_k \Delta u \frac{\partial u}{\partial x_k}\;  d\Gamma\; dt= \displaystyle{\sum_{k=1}^{2}} \int_{\Sigma} \Delta u \; \nu_k \frac{\partial u}{\partial x_k}\;  d\Gamma\; dt=\int_{\Sigma}  \partial_\nu u\; \Delta u\;  d\Gamma\; dt\,.
	\end{eqnarray*}	
	
	Let us now multiply the first equation of (\ref{1}) by $u$  and integrate over $Q$, we get
	\begin{eqnarray}\label{6}
	\int_\Omega [u u_t]dx\;dy|^T_0 + \int_0^T \Vert u\Vert^2_{H^2_*}-\int_Q \vert u_t\vert^2 dx\;dy\;dt =0.
	\end{eqnarray}
	
	Multiplying (\ref{6}) by $0<\alpha<1$ and taking the sum with  (\ref{5}), we get
	{\small \begin{eqnarray}
	&&		\left[\alpha \int_\Omega uu_t \,dx\;dy +\int_\Omega (u_t m\,\cdot\,\nabla u)dx\;dy\right]^T_0 +(1-\alpha)\int_Q   \vert u_t\vert^2\; dx\;dy\;dt  \nonumber\\
	&&	+\;\alpha\int_Q \Vert u\Vert^2_{H^2_*}\;dx\;dy\;dt+\int_Q \vert\Delta u\vert^2\;dx\;dy\;dt\nonumber\\ 
	&&		=\frac{1}{2}\int_{\Sigma} (m\,\cdot\,\nu) \vert u_t\vert^2\; d\Gamma\; dt -\frac{1}{2}\int_{\Sigma} (m\,\cdot\,\nu) \vert\Delta u\vert^2\;d\Gamma\; dt-\int_{\Sigma} \partial_\nu \Delta u (m\,\cdot\,\nabla u)\;  d\Gamma\; dt\nonumber\\
	&&	+\int_{\Sigma} \partial_\nu u\; \Delta u\;   d\Gamma\; dt + \int_{\Sigma}\Delta u (m\,\cdot\,\partial_\nu \nabla u)\;  d\Gamma\; dt.\label{7}
	\end{eqnarray}}

	Let $\varepsilon>0$ small enough, taking $\Omega_\varepsilon= (\varepsilon, \pi -\varepsilon)\times (-l+\varepsilon, l-\varepsilon)$, and let us  define the cutoff function (see figure \ref{fig 3} ):
	\begin{eqnarray}\label{8}
	\left\{
	\begin{array}{lcr}
	\psi=1\; &{\text in}\; \Omega\setminus \Omega_\varepsilon,&\\
	0\leq \psi\leq 1\;\; &{\text in}\;\; \Omega_\varepsilon\setminus \Omega_{2\varepsilon},&\\
	\psi=0 \;\; &{\text in}\;\; \Omega_{2\varepsilon},&
	\end{array}
	\right.
	\end{eqnarray}
	such that  $(\Omega\setminus \Omega_{2\varepsilon})\subset\subset \omega$, so that we have damping in $\Omega\setminus \Omega_{2\varepsilon}$.
	
	\medskip
	
	\begin{figure}[h!]
		\centering

\includegraphics[width=1.1\linewidth]{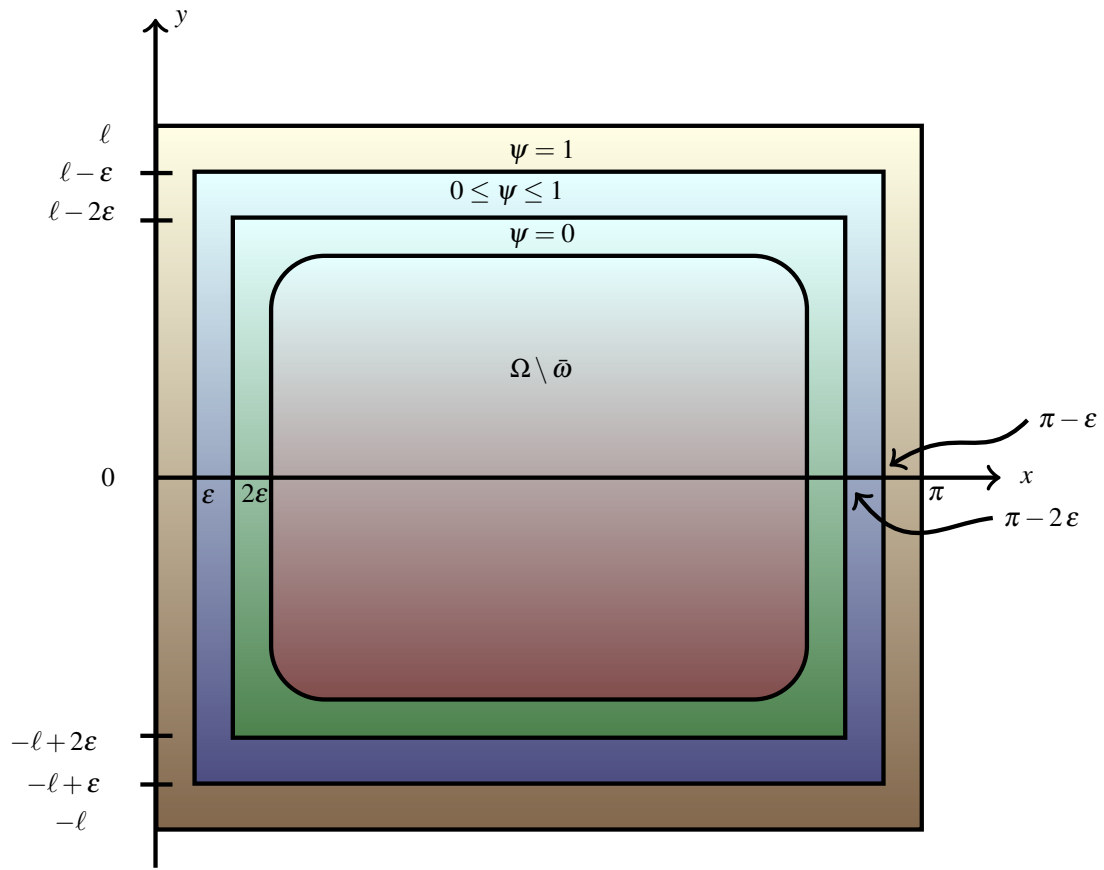}

		\caption{Function $\psi$}
		\label{fig 3}
	\end{figure}
	

	Now taking $q=\psi m$ in (\ref{4}), we get
	{\small \begin{eqnarray}
	&&		\left[ \int_\Omega u_t \; \psi m\,\cdot\,\nabla udx\;dy \right]^T_0 +\frac{1}{2}\int_Q div(\psi m)  \vert u_t\vert^2\; dx\;dy\;dt\nonumber\\
	&&  +\int_Q (\Delta (\psi m)\,\cdot\,\nabla u) \; \Delta u \;dx\;dy\;dt		+2\sum_{j,k} \int_Q \frac{\partial (\psi m)_k}{\partial x_j} \Delta u \frac{\partial^2 u}{\partial x_k \partial x_j}\;dx\;dy\; dt\nonumber\\
	&&-\frac{1}{2} \int_Q  div(\psi m) \vert\Delta u\vert^2\; dx\;dy\;dt\nonumber\\
	&&		=\frac{1}{2}\int_{\Sigma} (\psi m\,\cdot\,\nu) \vert u_t\vert^2\; d\Gamma\; dt -\frac{1}{2}\int_{\Sigma} (\psi m\,\cdot\,\nu) \vert\Delta u\vert^2\;d\Gamma\;dt\nonumber\\
	&& -\int_{\Sigma} \partial_\nu \Delta u (\psi m\,\cdot\,\nabla u)\;  d\Gamma\; dt	+\sum_{k=1}^{2}\int_{\Sigma} \partial_\nu (\psi m)_k\;\Delta u\;\frac{\partial u}{\partial x_k}\;   d\Gamma\; dt \nonumber\\
	&& + \int_{\Sigma}\Delta u (\psi m\,\cdot\,\partial_\nu \nabla u)\;  d\Gamma\; dt.\label{9}
	\end{eqnarray}}
	
	Since $\psi=1$ on $\Sigma$, one has:
	{\small \begin{eqnarray}
	&&		\frac{1}{2}\int_{\Sigma} (\psi m\,\cdot\,\nu) \vert u_t\vert^2\; d\Gamma\; dt -\frac{1}{2}\int_{\Sigma} (\psi m\,\cdot\,\nu) \vert\Delta u\vert^2-\int_{\Sigma} \partial_\nu \Delta u (\psi m\,\cdot\,\nabla u)\;  d\Gamma\; dt\nonumber\\
	&&		+\sum_{k=1}^{2}\int_{\Sigma} \partial_\nu (\psi m)_k\;\Delta u\;\frac{\partial u}{\partial x_k}\;   d\Gamma\; dt + \int_{\Sigma}\Delta u (\psi m\,\cdot\,\partial_\nu \nabla u)\;  d\Gamma\; dt,\nonumber\\
	&&		=\frac{1}{2}\int_{\Sigma} (m\,\cdot\,\nu) \vert u_t\vert^2\; d\Gamma\; dt -\frac{1}{2}\int_{\Sigma} (m\,\cdot\,\nu) \vert\Delta u\vert^2\; d\Gamma\;dt-\int_{\Sigma} \partial_\nu \Delta u (m\,\cdot\,\nabla u)\;  d\Gamma\; dt\nonumber\\
	&&	+\int_{\Sigma} \partial_\nu u\; \Delta u\;   d\Gamma\; dt + \int_{\Sigma}\Delta u (m\,\cdot\,\partial_\nu \nabla u)\;  d\Gamma\; dt,\label{10}
	\end{eqnarray}}
	then we have
	\begin{eqnarray}
	&&		\left[ \int_\Omega u_t \; \psi m\,\cdot\,\nabla udx\;dy \right]^T_0 +\frac{1}{2}\int_Q div(\psi m)  \vert u_t\vert^2\; dx\;dy\;dt\nonumber\\
	&& +2\sum_{j,k} \int_Q \frac{\partial (\psi m)_k}{\partial x_j} \Delta u \frac{\partial^2 u}{\partial x_k \partial x_j}\;dx\;dy\; dt		+\int_Q (\Delta (\psi m)\,\cdot\,\nabla u) \; \Delta u \;dx\;dy\;dt\nonumber\\
	&&-\frac{1}{2} \int_Q  div(\psi m) \vert\Delta u\vert^2\; dx\;dy\;dt\nonumber\\
	&&		=\left[\alpha \int_\Omega uu_t\,dx\;dy +\int_\Omega (u_t m\,\cdot\,\nabla u)dx\;dy\right]^T_0 +(1-\alpha)\int_Q   \vert u_t\vert^2\; dx\;dy\;dt  \nonumber\\
	&&	+\;\alpha\int_Q \Vert u\Vert^2_{H^2_*}\;dx\;dy\;dt+\int_Q \vert\Delta u\vert^2\;dx\;dy\;dt.\label{11} 
	\end{eqnarray}
	
	Note that $\psi=0$ in $\Omega\setminus \omega$. We deduce that
	\begin{eqnarray*}
		&&\frac{1}{2} \int_Q  div(\psi m) \vert u_t\vert^2\;dx\;dy\;dt=\frac{1}{2} \int_0^T\int_\omega  div(\psi m) \vert u_t\vert^2\;dx\;dy\;dt\\
		&&\leq C_1\int_0^T\int_\omega   \vert u_t\vert^2\;dx\;dy\;dt= C_1\int_0^T\int_\Omega \chi_\omega \vert u_t\vert^2\;dx\;dy\;dt,
	\end{eqnarray*}
	where $C_1>0$.\\
	
	Also we have
	\begin{eqnarray*}
		-\int_Q div(\psi m) \vert \Delta u\vert^2 \;dx\;dy\;dt&\leq& C_2\int_0^T\int_{supp(\psi)} \vert \Delta u\vert^2 \;dx\;dy\;dt\\&\leq& \hat{C}_2\int_0^T\int_{supp(\psi)}  F(u,u)\;dx\;dy\;dt,
	\end{eqnarray*}
	where $C_2, \hat{C}_2>0.$\\
	
	\medskip

	{\text {\bf Estimation of the term}}\;\; $\displaystyle{\sum_{j,k}} \displaystyle{\int}_Q \frac{\partial (\psi m)_k}{\partial x_j} \Delta u \frac{\partial^2 u}{\partial x_k \partial x_j}\;dx\;dy\; dt\,.$\\
	Indeed,
	\begin{eqnarray*}
	&&	\sum_{j,k} \int_Q \frac{\partial (\psi m)_k}{\partial x_j} \Delta u \frac{\partial^2 u}{\partial x_k \partial x_j}\;dx\;dy\; dt\\
	&&\leq \sum_{j,k}\int_0^T \int_{supp(\psi)} \vert \Delta u \vert\;\; \vert \frac{\partial^2 u}{\partial x_k \partial x_j}\vert \;dx\;dy\; dt\\
&&		\leq \int_0^T \left(\int_{supp(\psi)}\vert \Delta u\vert^2\;dx\;dy\right)^{\frac{1}{2}}\left(\int_{supp(\psi)}\left(\sum_{j,k}\vert  \frac{\partial^2 u}{\partial x_k \partial x_j}\vert\right)^2\;dx\;dy\right)^{\frac{1}{2}}\;dt
\\&&	\leq C_{\varepsilon^\prime} \int_0^T \int_{supp(\psi)} \vert \Delta u\vert^2\;dx\;dy\;dt +4\varepsilon^\prime\int_0^T\int_{supp(\psi)}\sum_{j,k}\vert  \frac{\partial^2 u}{\partial x_k \partial x_j}\vert^2\;dx\;dy\;dt\\
&& \leq \hat{C}_{\varepsilon^\prime} \displaystyle{\int_0^T} \displaystyle{\int_{supp(\psi)}} F(u,u)\;dx\;dy\;dt + C_3 \varepsilon^\prime \int_0^T E(t)\;dt,
	\end{eqnarray*} \\
where $C_{\varepsilon^\prime}, \varepsilon^\prime, \hat{C}_{\varepsilon^\prime}$ and $C_3$ are positive constants.\\
	
	{\text {\bf Estimation of the term}}\;\;$\displaystyle{\int_Q} (\Delta (\psi m)\,\cdot\,\nabla u) \; \Delta u \;dx\;dy\;dt\,.$\\
	
	We observe that
	\begin{align*}
		&\int_Q (\Delta (\psi m)\,\cdot\,\nabla u) \; \Delta u \;dx\;dy\;dt\\&\leq \tilde{C}_{1, \varepsilon^\prime} \int_0^T \int_{supp(\psi)} \vert \Delta u\vert^2\;dx\;dy\;dt+\varepsilon^\prime\int_0^T \int_{supp(\psi)} \vert \nabla u\vert^2\;dx\;dy\;dt\\
		&\leq C_4 \varepsilon^\prime \displaystyle{\int_0^T} E(t)\;dt  + \hat{C}_{2,\varepsilon^{\prime}}\displaystyle{\int_0^T}\displaystyle{\int_{supp(\psi)}} F(u, u)\;dx\;dy\;dt.
	\end{align*}\\

	Combining all the above estimates and choosing $\varepsilon^\prime>0$ small enough we obtain
{\small	\begin{eqnarray}
\hspace*{-.3cm}	&&	\nonumber	\int_0^T E(t)\;dt
\nonumber\leq C_5\left( \left\vert \left[\alpha \int_\Omega uu_t\;dx\;dy +\int_\Omega (u_t m\,\cdot\,\nabla u)\;dx\;dy\right]^T_0 \right\vert\right.\\
&&\left.  + \left[ \int_\Omega u_t \; (\psi m\,\cdot\,\nabla u)\;dx\;dy \right]^T_0\right)	+C_6\left(\int_0^T\int_{supp(\psi)} F(u, u)\;dx\;dy\;dt \right)\!\!.\label{12}
	\end{eqnarray}}
	
	It remains to estimate the term $\displaystyle{\int_0^T}\displaystyle{\int_{supp(\psi)}} F(u, u)\;dx\;dy\;dt$ in terms of the damping term.\\
	
	\underline{Step 2}\;\;
	
	Let $\eta:\mathbb{R}^2\to \mathbb{R}$ a smooth function to be determined later. Multiplying equation (\ref{1}) by $\eta u$ and performing integration by parts, yields\\\\
	\begin{eqnarray}
	&&	\left[ \displaystyle{\int_\Omega} u_t \; \eta  u\;dx\;dy \right]^T_0 -\displaystyle{\int_Q} \eta \vert u_t\vert^2\;dx\;dy\;dt
	+\displaystyle{\int_Q} \Big( \Delta u \Delta \eta u +\vert \Delta u\vert^2 \eta +2 \Delta u \eta_x u_x \nonumber\\
	&& +2\Delta u \eta_y u_y 		+ 2(1-\sigma) u_{xy}\eta_{xy} u +   2(1-\sigma) u_{xy}\eta_{x} u_y +2(1-\sigma) u_{xy}\eta_{y} u_x \nonumber\\
	&&	 +2(1-\sigma) \eta u_{xy}^2	- (1-\sigma) u_{xx}\eta_{yy} u - 2(1-\sigma) u_{xx}\eta_{y} u_y - (1-\sigma) \eta u_{xx}u_{yy}\nonumber\\
	&&	- (1-\sigma) u_{yy}\eta_{xx} u - 2(1-\sigma) u_{yy}\eta_{x} u_x - (1-\sigma) \eta u_{xx} u_{yy}\Big) \;dx\;dy\;dt=0,\label{13}
	\end{eqnarray}
	that is, \\
	\begin{eqnarray}
	&&	\displaystyle{\int_Q} \eta F(u, u)\;dx\;dy\;dt=-\left[ \displaystyle{\int_\Omega} u_t \; \eta  u\;dx\;dy \right]^T_0 +\displaystyle{\int_Q} \eta \vert u_t\vert^2\;dx\;dy\;dt \nonumber\\
	&&	- \displaystyle{\int_Q} \Big( \Delta u \Delta \eta u  +2 \Delta u \eta_x u_x +2\Delta u \eta_y u_y + 2(1-\sigma) u_{xy}\eta_{xy} u \nonumber\\
	&&		+ 2(1-\sigma) u_{xy}\eta_{x} u_y +2(1-\sigma) u_{xy}\eta_{y} u_x- (1-\sigma) u_{xx}\eta_{yy} u  \nonumber\\
	&&		 - 2(1-\sigma) u_{xx}\eta_{y} u_y - (1-\sigma) u_{yy}\eta_{xx} u - 2(1-\sigma) u_{yy}\eta_{x} u_x \Big) \;dx\;dy\;dt\,.\label{14}
	\end{eqnarray}
	
	Let us define $\eta$.
	Consider Figure \ref{fig 4} and set the compact subsets $\hat U=\Omega \backslash \Omega_{2\varepsilon}$ and $\hat V=\Omega \backslash \omega$ of $\Omega$.
	Lemma \ref{existencia suave intermediario} states that there exist open subsets $U$ and $V$ of $\Omega$ with smooth boundaries and disjoint closures such that $\hat U \subset U$ and $\hat V \subset V$. For the sake of convenience we define $A=\Omega \backslash \bar U$ and $B=V$ (see Figure \ref{fig 4}).
	
	Now we define the smooth function $\eta$ according to Theorem \ref{vizinhanca tubular em fechados}. We have that
	\begin{eqnarray*}
		\eta(x)=\left\{
		\begin{array}{lcr}
			1\;\; \text{if}\;\;x\in\Omega\backslash A,\\\\
			0\;\;\text{if}\;x \in \bar B, \\\\
			\text{in the interval } (0,1) \; \; \text{in } A\backslash \bar B\;,
			
		\end{array}
		\right.
	\end{eqnarray*}
	and the behavior of $\eta$ in a tubular neighborhood of $\partial (A\backslash \bar B)$ contained in the closure of $A\backslash \bar B$ is given by $\text{dist}^4(x,\partial B)$, near $\partial B$ and $1-\text{dist}^4(x,\partial A)$ near $\partial A$ (for a complete definition, see Theorem \ref{vizinhanca tubular em fechados}).\\
	
	Then $\frac{\vert \Delta \eta\vert^2}{\eta}, \frac{\vert \eta_x\vert^2}{\eta}, \frac{\vert \eta_y\vert^2}{\eta}, \frac{\vert \eta_{xy}\vert^2}{\eta},  \frac{\vert \eta_{xx}\vert^2}{\eta}$ and $\frac{\vert \eta_{yy}\vert^2}{\eta}$ are bounded in $\Omega\backslash \bar B$ due to Theorem \ref{vizinhanca tubular em fechados}, Remark \ref{laplacian and other derivatives} and Remark \ref{observacao derivada primeira}.
	Thus, $supp(\eta)\subset \Omega\backslash \bar B$ and $\eta \geq 0$. Combining these facts, it follows that
	
	$$\displaystyle{\int_Q} \eta \vert u_t\vert^2\;dx\;dy\;dt\leq C_7 \int_0^T \int_\Omega \chi_\omega \vert u_t\vert^2\;dx\;dy\;dt.$$
	
	Let us estimate the term $ \displaystyle{\int_Q}  \Delta u \Delta \eta u\;dx\;dy\;dt$.\\
	We have
	\begin{eqnarray*}
		&&\int_0^T\int_\omega \Delta u \Delta \eta u\;dx\;dy\;dt\leq  \int_0^T\int_{\Omega\backslash \bar B} \sqrt{\eta}\;\vert \Delta u\vert\; \frac{\vert \Delta \eta \vert}{\sqrt{\eta}}\;\vert u\vert \;dx\;dy\;dt\,\\
		&&\leq \varepsilon^\prime \int_0^T \int_{\Omega\backslash \bar B} \eta F(u, u)\;dx\;dy\;dt + \hat{C}_{6, \varepsilon^\prime} \int_0^T \int_{\Omega\backslash \bar B} \frac{\vert \Delta \eta\vert^2}{\eta}\; \vert u\vert^2\;dx\;dy\;dt.
	\end{eqnarray*}
	
	From the above, we have that $\frac{\vert \Delta \eta\vert^2}{\eta}\in L^{\infty}(\Omega\backslash \bar B)$. Taking the other terms which come from (\ref{14}) into account we also have, by construction that $\frac{\vert \eta_x\vert^2}{\eta}, \frac{\vert \eta_y\vert^2}{\eta}, \frac{\vert \eta_{xy}\vert^2}{\eta},  \frac{\vert \eta_{xx}\vert^2}{\eta}$ and $\frac{\vert \eta_{yy}\vert^2}{\eta}$ are bounded in $\Omega\backslash \bar B$.\\

	So, having in mind that $\eta=1$ on  $supp(\psi)$ and  $\eta\leq1 $ on $\Omega\backslash \bar B$, we infer

	\begin{figure}[h!]
\centering
\includegraphics[width=1\linewidth]{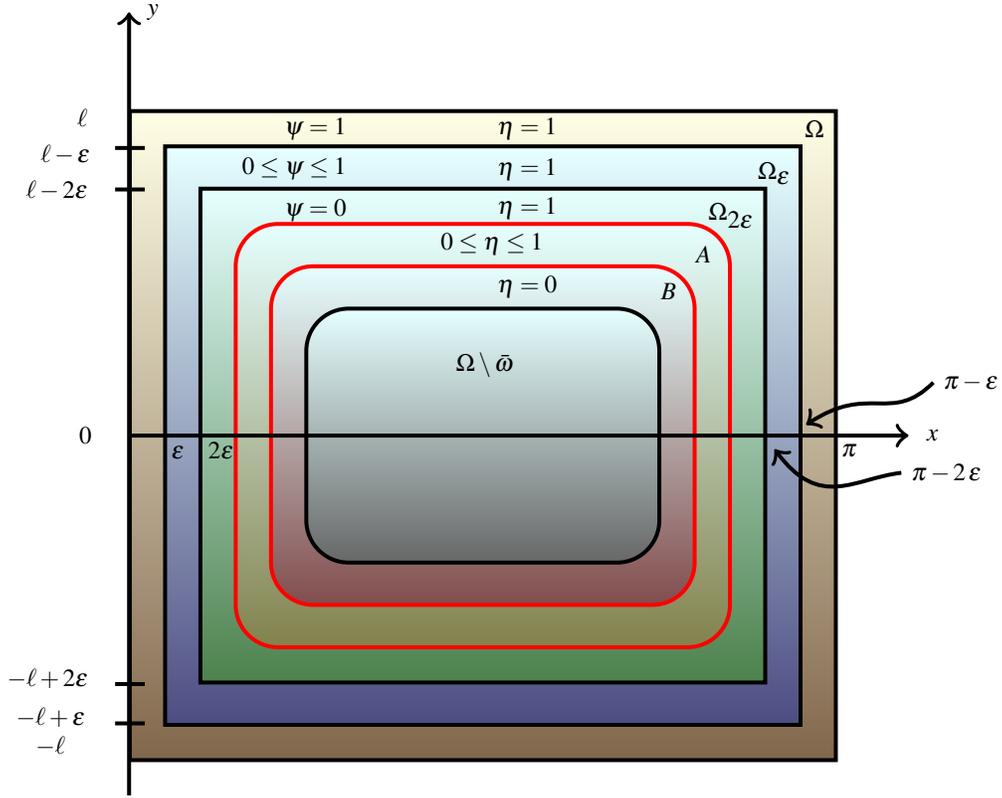}
\caption{Smooth function $ \eta. $}
\label{fig 4}
\end{figure}

{\small 	\begin{eqnarray*}
		\int_0^T\int_{supp(\psi)}\!\!\!\!\!\!  F(u, u)\;dx\;dy\;dt= \int_0^T\int_{supp(\psi)}\!\!\!\!\!\!\!\!\! \eta F(u, u)\;dx\;dy\;dt\leq\int_0^T\int_{\Omega\backslash \bar B} \eta F(u, u)\;dx\;dy\;dt.
	\end{eqnarray*}}
	
	We obtain from (\ref{12}), (\ref{14}) and the above similar estimations
	\begin{eqnarray}
	&&		\int_0^T E(t)\;dt\leq C_8\Big( \left\vert \left[\alpha \int_\Omega uu_t \;dx\;dy +\int_\Omega (u_t m\,\cdot\,\nabla u)\;dx\;dy\right]^T_0 \right\vert \nonumber\\
	&& + \left[ \int_\Omega u_t \; (\psi m\,\cdot\,\nabla u)\;dx\;dy \right]^T_0		+\left\vert\left[ \displaystyle{\int_\Omega} u_t \; \eta  u\;dx\;dy \right]^T_0\right\vert +\int_Q \chi_\omega \vert u_t\vert^2\;dx\;dy\;dt\nonumber\\
	&&	+\int_Q  \vert u\vert^2\;dx\;dy\;dt+\int_Q  \vert u_x\vert^2\;dx\;dy\;dt+\int_Q  \vert u_y\vert^2\;dx\;dy\;dt\Big).\label{15}
	\end{eqnarray}

	The last step is to prove that
	\begin{eqnarray}
	&&	\int_Q  \vert u\vert^2\;dx\;dy\;dt+\int_Q  \vert u_x\vert^2\;dx\;dy\;dt+\int_Q  \vert u_y\vert^2\;dx\;dy\;dt\label{16}\\
	&&	\leq C \int_Q \chi_\omega \vert u_t\vert^2\;dx\;dy\;dt,\nonumber
	\end{eqnarray}
	for some positive constant $C$.\\
	\underline{Step 3}\;\;\
	We argue by contradiction. Let us suppose that (\ref{16}) is not satisfied and let $(u_0^k, u_1^k)_k $ be a sequence of initial data where the corresponding solutions $(u^k)_k$ of (\ref{1}), with $E_k(0)$ assumed uniformly bounded in $k$, satisfy
	$$\lim_{k\to+\infty} \frac{\displaystyle{\int_Q}  \left(\vert u^k\vert^2 +\vert u^k_x\vert^2+\vert u^k_y\vert^2\right)\;dx\;dy\;dt}{\displaystyle{\int_Q} \chi_\omega\vert u^k_t\vert^2\;dx\;dy\;dt }=+\infty,$$
	that is
	\begin{equation}\label{17}
	\lim_{k\to+\infty} \frac{  \displaystyle{\int_Q} \chi_\omega \vert u^k_t\vert^2\;dx\;dy\;dt}{\displaystyle{\int_Q}  \left(\vert u^k\vert^2 +\vert u^k_x\vert^2+\vert u^k_y\vert^2\right)\;dx\;dy\;dt}=0\,.
	\end{equation}
	Since $E_k(t)\leq E_k(0)\leq L$, with $L>0$ independent of $k$, we obtain a subsequence, still denoted by $(u_k)_k$,  which satisfies the convergence
	\begin{equation}\label{18}
	u^k\rightharpoonup u \;\;{\text{weakly\; star\; in}}\;\; L^{\infty}(0, T; H^2_*(\Omega))\,.
	\end{equation}
	\begin{equation}\label{19}
	u_t^k\rightharpoonup u_t \;\;{\text{weakly\; star\; in}}\;\; L^{\infty}(0, T; L^2(\Omega))\,.
	\end{equation}
	Thanks to the compact embedding $H^2_*(\Omega)\subset L^2(\Omega)$ and  $H^2_*(\Omega)\subset H^1(\Omega)$, the obtain
	\begin{equation}\label{20}
	u^k\to u \;\;{\text{strongly\; in}}\;\; L^{2}(0, T; L^2(\Omega))\,.
	\end{equation}
	\begin{equation}\label{21}
	u_x^k \to u_x \;\;{\text{strongly\; in}}\;\; L^{2}(0, T; L^2(\Omega))\,.
	\end{equation}
	\begin{equation}\label{22}
	u_y^k \to u_y \;\;{\text{strongly\; in}}\;\; L^{2}(0, T; L^2(\Omega))\,.
	\end{equation}
	At this point we will divide our proof into two cases: $u\neq 0$ and $u=0$ .\\
	
	{\bf Case(I):} $u\neq 0$:\\
	We also observe that  form (\ref{17}), (\ref{20}), (\ref{21}) and (\ref{22}) we have
	\begin{equation}\label{23}
	\lim_{k\to +\infty} \int_0^T\int_\Omega \chi_\omega \vert u_t^k\vert^2\;dx\;dy\;dt=0\,.
	\end{equation}
	Passing to the limit in the equation, when $k\to +\infty$, we get

{	\small \begin{equation}\label{24}
	\begin{cases}
	u_{tt}(x,y,t)+\Delta^2 u(x,y,t)=0, &\mbox{ in } \;\Omega \times (0,+\infty), \\\\
	
	u(0,y,t)=u_{xx}(0,y,t)=u(\pi,y,t)=u_{xx}(\pi,y,t)=0, &(y,t)\in(-l,l)\times (0,+\infty), \\\\
	
	u_{yy}(x,\pm l,t)+\sigma u_{xx}(x,\pm l, t)=0, &(x,t)\in (0,\pi)\times (0, +\infty), \\\\
	
	u_{yyy}(x,\pm l,t)+(2-\sigma)u_{xxy}(x,\pm l, t)=0, &(x,t)\in(0,\pi)\times(0,+\infty),\\\\
	
	u_t(x,y,t)=0, &\mbox{ in } \;\omega\times (0, +\infty),
		\end{cases}
		\end{equation}
	}
	and for $u_t=v$, we obtain in the distributional sense
	\begin{eqnarray}\label{25}
	\left\{
	\begin{array}{lcr}
	v_{tt}(x,y,t)+\Delta^2 v(x,y,t)=0, &\mbox{ in } \;\Omega \times (0,+\infty),& \\\\
	
	v(x,y,t)=0, &\mbox{ in } \;\omega\times (0,+\infty),&
	\end{array}
	\right.
	\end{eqnarray}
	From Holmgren's uniqueness theorem, we deduce that $v=u_t=0\;in \;\Omega$. Then we obtain:
	\begin{eqnarray}\label{26}
	\left\{
	\begin{array}{lcr}
	\Delta^2 u(x,y)=0, &\mbox{ in } \;\Omega,&\\\\
	
	u(0,y)=u_{xx}(0,y)=u(\pi,y)=u_{xx}(\pi,y)=0, &y\in(-l,l),& \\\\
	
	u_{yy}(x,\pm l)+\sigma u_{xx}(x,\pm l)=0,  &x\in (0,\pi),&\\\\
	
	u_{yyy}(x,\pm l)+(2-\sigma)u_{xxy}(x,\pm l)=0, &x\in(0,\pi),&
	
	\end{array}
	\right.
	\end{eqnarray}
	By using \cite{Gazzola1} (Theorem 3.2), we conclude that $u=0$. So, we obtain a contradiction.\\
	
	{\bf Case (II):} $u=0$ \\
	Define $$c_k=\left[\displaystyle{\int_Q}  \left(\vert u^k\vert^2 +\vert u^k_x\vert^2+\vert u^k_y\vert^2\right)\;dx\;dy\;dt\right]^{\frac{1}{2}},$$
	and
	$$\overline{u}^k=\frac{u^k}{c_k}.$$
	We obtain
	\begin{equation}\label{27}
	\displaystyle{\int_Q}  \left(\vert \overline{u}^k\vert^2 +\vert \overline{u}^k_x\vert^2+\vert \overline{u}^k_y\vert^2\right)\;dx\;dy\;dt=1\,.
	\end{equation}
	We set
	$$\overline{E}_k(t)=\frac{1}{2}\left(\int_\Omega \vert \overline{u}_t^k\vert^2\;dx\;dy + \Vert \overline{u}^k_t\Vert^2_{H^2_*(\Omega)}\right)\,.$$
	We deduce that
	\begin{equation}\label{28}
	\overline{E}_k=\frac{E_k}{c_k^2}\,.
	\end{equation}
	On the other hand,
	$$\left\vert \left[\alpha \int_\Omega u u_t\;dx\;dy +\int_\Omega (u_t m\,\cdot\,\nabla u)\;dx\;dy\right]^T_0 \right\vert$$
	$$\leq C_{10} \left[\alpha \int_\Omega \vert u\vert^2\;dx\;dy +\int_\Omega \vert u_t\vert^2\;dx\;dy +\int_\Omega \vert \nabla u\vert^2\;dx\;dy\right]_0^T$$
	\begin{equation}\label{29}
	\leq C_{11}(E(T) +E(0))=2C_{11}E(T).
	\end{equation}
	
Analogously, we prove that
	\begin{equation}\label{30}
	\left[ \int_\Omega u_t \; \psi m\,\cdot\,\nabla u\;dx\;dy \right]^T_0 \leq C_{12}(E(T)+ \int_Q \chi_\omega\vert u_t\vert^2\;dx\;dy\;dt),
	\end{equation}
	and
	\begin{equation}\label {31}
	\left\vert\left[ \displaystyle{\int_\Omega} u_t \; \eta  u\;dx\;dy \right]^T_0\right\vert \leq C_{13}(E(T)+ \int_Q \chi_\omega\vert u_t\vert^2\;dx\;dy\;dt).
	\end{equation}
	By the use of (\ref{15}), (\ref{29}), (\ref{30}), (\ref{31}) and the obvious equality
	$$TE(T) =\int_0^T E(t)\;dt,$$
	we obtain, for $T$ large enough, the existence of a constant $C_{14}>0$ such that
	\begin{equation}\label{32}
	E(T)\leq C_{14} \left(\int_Q \chi_\omega\vert u_t\vert^2\;dx\;dy\;dt + \int_Q \left( \vert u\vert^2 +\vert u_x\vert^2+\vert u_y\vert^2\right)\;dx\;dy\;dt\right)
	\end{equation}
	and then,
	$$E(t)\leq E(0)\leq  \tilde{C}_{14} \left(\int_Q \chi_\omega\vert u_t\vert^2\;dx\;dy\;dt + \int_Q \left( \vert u\vert^2 +\vert u_x\vert^2+\vert u_y\vert^2\right)\;dx\;dy\;dt\right)\,.$$
	The last inequality and (\ref{28}) give us
	$$\overline{E}_k(t)\leq \tilde{C}_{14} \left(  \frac{  \displaystyle{\int_Q} \chi_\omega \vert u^k_t\vert^2\;dx\;dy\;dt}{\displaystyle{\int_Q}  \left(\vert u^k\vert^2 +\vert u^k_x\vert^2+\vert u^k_y\vert^2\right)\;dx\;dy\;dt}+1   \right)\,.$$
	From (\ref{17}), we conclude that there exist a positive constant $\overline{L}$ such that
	$$\overline{E}_k(t)\leq \overline{L}, \forall\;t\in[0, T],\;\;\forall\;k\in\mathbb{N},$$
	and consequently we have
	\begin{equation}\label{33}
	\overline{u}^k\to \overline{u} \;\;{\text{strongly\; in}}\;\; L^{2}(0, T; L^2(\Omega)),
	\end{equation}
	and
	$$\overline{u}_x^k \to \overline{u}_x \;\;{\text{strongly\; in}}\;\; L^{2}(0, T; L^2(\Omega))$$
	$$\overline{u}_y^k \to \overline{u}_y \;\;{\text{strongly\; in}}\;\; L^{2}(0, T; L^2(\Omega))$$
	Now, it follows from (\ref{23}) that $\displaystyle \lim_{k\to +\infty}\int_0^T\int_\Omega \chi_\omega \vert \overline{u}^k_t\vert^2\;dx\;dy=0$.\\
	In addition, $\overline{u}^k$ satisfies the equation
	$$\overline{u}^k_{tt} + \Delta^2 \overline{u}^k =0\,.$$
	Passing to the limit, when $k\to +\infty$, and taking into account the above convergence, we obtain
{\small	\begin{equation*}
		\begin{cases}
			\overline{u}_{tt}(x,y,t)+\Delta^2 \overline{u}(x,y,t)=0, &\mbox{ in } \;\Omega \times (0,+\infty),\\\\
			
			\overline{u}(0,y,t)=\overline{u}_{xx}(0,y,t)=\overline{u}(\pi,y,t)=\overline{u}_{xx}(\pi,y,t)=0, &(y,t)\in(-l,l)\times (0,+\infty), \\\\
						\overline{u}_{yy}(x,\pm l,t)+\sigma \overline{u}_{xx}(x,\pm l, t)=0, &(x,t)\in (0,\pi)\times (0, +\infty), \\\\			\overline{u}_{yyy}(x,\pm l,t)+(2-\sigma)\overline{u}_{xxy}(x,\pm l, t)=0, &(x,t)\in(0,\pi)\times(0,+\infty),\\\\
						\overline{u}_t(x,y,t)=0, &\mbox{ in } \;\omega\times (0, +\infty),	
		\end{cases}
	\end{equation*}}
	
	and for $\overline{u}_t=\overline{v}$, we obtain in the distributional sense
	\begin{eqnarray*}
		\left\{
		\begin{array}{lcr}
			\overline{v}_{tt}(x,y,t)+\Delta^2 \overline{v}(x,y,t)=0, &\mbox{ in } \;\Omega \times (0,+\infty),& \\\\
			
			\overline{v}(x,y,t)=0, &\mbox{ in } \;\omega\times (0,+\infty),&
		\end{array}
		\right.
	\end{eqnarray*}
	Applying again Holmgren's uniqueness theorem, we deduce that $\overline{v}=\overline{u}_t=0\;in \;\Omega$. Then we obtain:
	\begin{eqnarray*}
		\left\{
		\begin{array}{lcr}
			\Delta^2 \overline{u}(x,y)=0, &\mbox{ in } \;\Omega,& \\\\
			
			\overline{u}(0,y)=\overline{u}_{xx}(0,y)=\overline{u}(\pi,y)=\overline{u}_{xx}(\pi,y)=0, &y\in(-l,l),& \\\\
			
			\overline{u}_{yy}(x,\pm l)+\sigma \overline{u}_{xx}(x,\pm l)=0, &x\in (0,\pi),& \\\\
			
			\overline{u}_{yyy}(x,\pm l)+(2-\sigma)\overline{u}_{xxy}(x,\pm l)=0, &x\in(0,\pi),&
			
		\end{array}
		\right.
	\end{eqnarray*}
	and consequently  $\overline{u}=0$, which is   a contradiction in view of (\ref{27}) and (\ref{33}).
$ \qedsymbol $

\section{The Nonlinear Model}

\subsection{Wellposedness}
{\color{black}
To classify the growth of the nonlinear feedbacks we introduce the notion of the polynomial order at infinity.
\begin{definition}[Order at infinity of a nonlinear map]\label{def:order}
A monotone increasing map $f:\mathrm{R}\to\mathbb{R}$, $f(0)=0$, is of  the order $r\dfn \cO(f)\geq 0$ at infinity, if there exists $c > 0$ such that
\begin{equation}\label{i:order}
\begin{split}
 |s|^{r+1} \sim   f(s)s&\quad \text{whenever}\quad |s|\geq c.
\end{split}
\end{equation}
When the order $r$ exceeds, falls below, or equals $1$ we say the map $f$ is respectively: superlinear, sublinear, or linearly bounded at infinity.
\end{definition}

Based on the above definition, the function $g$ is assumed to be continuous and monotonic increasing such that
\begin{equation}\label{g}
\left\{
\begin{aligned}
&g(s)s >0 \hbox{ for all } s\ne 0,\\
&\alpha_1 |s|^{r+1} \leq g(s)s \leq \alpha_2 |s|^{r+1}\hbox{ for all } |s| \geq 1,
\end{aligned}
\right.
\end{equation}
for some positive constants $\alpha_1,\alpha_2$.

\begin{asmp}[Regularity for sub- and superlinear feedbacks at infinity]\label{as:regularity}
 This assumption  is imposed only when $g$ is not linearly bounded at infinity:

 \begin{itemize}
 	\item    If $\cO(g) \neq 1$  assume  $u_t \in L^\infty\big (\bbR_+; L^{p_{0}}(\cM)\big)$,	where  $p_{0}> 2\max\{1,\cO(g)\}$.
	
\end{itemize}
 \end{asmp}	
\begin{remark}\label{remark 2}
Note that since the system is monotone dissipative,  the regularity Assumption \ref{as:regularity} can be satisfied to a certain extent by starting with smooth initial data. Thus, if a solution is \textbf{regular} (as described below) then, $u_{t}\in L^{\infty}(\mathbb{R}_{+};H_*^2(\Omega))$, hence $u_{t}\in L^{\infty}(\mathbb{R}_{+};L^{p_{0}}(\Omega))$ for any $p_{0}<\infty$, because $\dim \Omega=2$. Consequently, when $(u^{0}, u^{1})$ belong to the domain $D(A)$ of the evolution generator (as defined in section 2) there is no restriction on  $\cO(g)$. Remember that we shall work with regular solutions and for standard density arguments the decay rate estimates remain valid for weak solutions as well.
\end{remark}
}

Inspired in \cite{Alabau}, \cite{Alabau2}, \cite{Alabau3}, \cite{Cavalcanti0} and \cite{Lasiecka-Tataru}, let $h$ be a\ concave, strictly
increasing function, with $ h\left( 0\right) =0$, and such that
\begin{equation}
h\left( s\,g(s)\right) \geq s^{2}+ g^{2}(s),\text{\ for } |s|<1.
\label{h}
\end{equation}

Problem (\ref{main problem}) can be written
\begin{eqnarray*}
	\left\{
	\begin{aligned}
		& U_t + \mathcal{A} U =G,\\
		& U(0)=U_0,
	\end{aligned}
	\right.
\end{eqnarray*}
where
{\small\begin{eqnarray*}
	U=\left(
	\begin{aligned}
		&u\\
		&v
	\end{aligned}
	\right);\,
	\mathcal{A} U:=\left(
	\begin{aligned}
		& -v\\
		& \Delta^2u + a(\cdot) g(v)
	\end{aligned}
	\right);\,
	G(U)=\left(
	\begin{aligned}
		&~0\\
		&-\phi(u)u_{xx}
	\end{aligned}
	\right)
	\hbox{ and }\,
	U_0=\left(
	\begin{aligned}
		&u_0\\
		&v_0
	\end{aligned}
	\right),
\end{eqnarray*}}
where  $D(\mathcal{A})=D(A)$ has been defined in the previous section. It is not difficult to prove by using standard nonlinear semigroup theory that $\mathcal{A}$ is maximal monotone operator in $\mathcal{H}$ (see, for instance, \cite{Cavalcanti-DiasSilva-Domingos}). Thus, in order to prove that problem (\ref{main problem}) is wellposed it is sufficient to prove that:

\begin{lemma}
	$G$  is locally Lipschitz in $\mathcal{H}$.
\end{lemma}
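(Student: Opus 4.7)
The plan is to exploit the structure of $G$: since $G(U)=(0,-\phi(u)u_{xx})^T$ depends only on the first component $u$ of $U=(u,v)^T$, verifying the local Lipschitz property of $G:\mathcal{H}\to\mathcal{H}$ reduces to estimating $\|\phi(u_1)u_{1,xx}-\phi(u_2)u_{2,xx}\|_{L^2(\Omega)}$ by $\|u_1-u_2\|_{H^2_*(\Omega)}$, uniformly for $u_1,u_2$ in an arbitrary ball of radius $R>0$ in $H^2_*(\Omega)$.

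First I would split the difference in the standard add-and-subtract manner,
\begin{equation*}
\phi(u_1)u_{1,xx}-\phi(u_2)u_{2,xx} = \phi(u_1)\bigl(u_{1,xx}-u_{2,xx}\bigr) + \bigl(\phi(u_1)-\phi(u_2)\bigr)u_{2,xx}.
\end{equation*}
For the first summand I would bound $|\phi(u_i)|\leq P+S\|u_{i,x}\|_{L^2}^2 \leq C(P,S,R)$, using the embedding $H^2_*(\Omega)\hookrightarrow H^1(\Omega)$ which follows from the equivalence between $\|\cdot\|_{H^2_*}$ and the usual $H^2$-norm recalled after \eqref{01}; the remaining factor $\|u_{1,xx}-u_{2,xx}\|_{L^2}$ is directly controlled by $\|u_1-u_2\|_{H^2_*}$. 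For the second summand the key algebraic step is the factorization
\begin{equation*}
\phi(u_1)-\phi(u_2) = S\int_\Omega(u_{1,x}-u_{2,x})(u_{1,x}+u_{2,x})\,dx\,dy,
\end{equation*}
which by Cauchy--Schwarz and the same embedding yields $|\phi(u_1)-\phi(u_2)|\leq C(S,R)\|u_1-u_2\|_{H^2_*}$; combined with $\|u_{2,xx}\|_{L^2}\leq \|u_2\|_{H^2_*}\leq CR$ this closes the estimate for the second piece.

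I do not expect any genuine obstacle here: the nonlinearity $\phi(u)u_{xx}$ is a cubic polynomial in the derivatives of $u$, and the whole argument is essentially careful bookkeeping for a smooth nonlinear map between Sobolev spaces. The only subtlety worth being explicit about is that the resulting Lipschitz constant grows like $R^{2}$, which reflects the cubic character of the geometric (Berger-type) nonlinearity and explains why one only gets a \emph{local}, rather than global, Lipschitz bound on $\mathcal{H}$.
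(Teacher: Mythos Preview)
Your proposal is correct and follows essentially the same approach as the paper: both proofs use the add-and-subtract decomposition $\phi(u_1)u_{1,xx}-\phi(u_2)u_{2,xx}=\phi(u_1)(u_{1,xx}-u_{2,xx})+(\phi(u_1)-\phi(u_2))u_{2,xx}$, bound $|\phi(u_i)|$ on the ball via $H^2_*\hookrightarrow H^1$, and control $\phi(u_1)-\phi(u_2)$ through the factorization $u_{1,x}^2-u_{2,x}^2=(u_{1,x}-u_{2,x})(u_{1,x}+u_{2,x})$ together with Cauchy--Schwarz. The only cosmetic difference is that the paper first isolates the linear contribution $-P(u_{1,xx}-u_{2,xx})$ before applying the decomposition to the nonlocal part, whereas you treat the full $\phi$ at once; your observation about the $R^2$ growth of the Lipschitz constant is an extra remark not present in the paper.
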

\noindent{\bf Proof:}
	We need to prove that given $R>0$ there exists $C(R)>0$ such that
	\begin{eqnarray}\label{I}
	||G(U) - G(V)||_{\mathcal{H}} \leq C(R) \, || U-V||_{\mathcal{H}},~\hbox{ provided that }||U||_{\mathcal{H}}, ||V||_{\mathcal{H}}\leq R.
	\end{eqnarray}
	
	One has
{\small	\begin{eqnarray}\label{II}
	&&||G(U) - G(V)||_{\mathcal{H}}^2 \\
	&&= \int_{\Omega} \left|\phi(u)u_{xx} - \phi(\tilde u)\tilde {u}_{xx} \right|^2\,dx\nonumber\\
	&&= \int_{\Omega} \left|-P(u_{xx} -\tilde u_{xx}) +S\left[\left(\int_\Omega u_x^2\,dx\right)u_{xx} -\left(\int_\Omega \tilde u_x^2\,dx\right)\tilde u_{xx} \right]  \right|^2 \,dx\nonumber\\
	&&\leq L_1  \int_{\Omega} |u_{xx} -\tilde u_{xx}|^2\, dx + L_1 \int_\Omega \left|\left(\int_\Omega u_x^2\,dx\right)u_{xx} -\left(\int_\Omega \tilde u_x^2\,dx\right)\tilde u_{xx}  \right|^2\,dx,\nonumber
	\end{eqnarray}}
	where $L_1$ is a positive constant.
	
	However,
{\small	\begin{eqnarray}\label{III}
	&& \int_\Omega \left|\left(\int_\Omega u_x^2\,dx\right)u_{xx} -\left(\int_\Omega \tilde u_x^2\,dx\right)\tilde u_{xx}  \right|^2\,dx\\
	&& =\int_\Omega \left| \left(\int_\Omega u_x^2\,dx\right)(u_{xx}- \tilde u_{xx}) + \tilde u_{xx}\left(\int_\Omega u_x^2\,dx - \int_\Omega \tilde u_x^2\,dx
	\right)  \right|^2\,dx\nonumber\\
	&&\leq L_2 \left(\int_\Omega u_x^2\,dx\right)^2 \int_\Omega |u_{xx}- \tilde u_{xx}|^2 \,dx + L_2 \int_\Omega |\tilde u_{xx}|^2\,dx\, \left(\int_\Omega u_x^2\,dx - \int_\Omega \tilde u_x^2\,dx\right)^2\nonumber\\
	&& \leq L_3(R)\int_\Omega |u_{xx}- \tilde u_{xx}|^2 \,dx + L_3(R) \left(\int_\Omega (u_x^2 -  \tilde u_x^2)\,dx\right)^2\nonumber\\
	&&\leq L_3(R)\int_\Omega |u_{xx}- \tilde u_{xx}|^2 \,dx + L_4(R) \int_\Omega |u_x -  \tilde u_x|^2\,dx,\nonumber
	\end{eqnarray}}
	where $L_2$, $L_3=L_3(R)$ and $L_4=L_4(R)$ are positive constants.

	Combining  (\ref{II}) and (\ref{III}) yields $||G(U) - G(V)||_{\mathcal{H}} \leq C(R) \, || U-V||_{\mathcal{H}}$ as we desire to prove.
$ \qedsymbol $

\medskip

Thus, for $U_0 \in \mathcal{H}$ given, then according to standard semigroup properties problem (\ref{main problem}) possesses a unique solution $U\in C([0,\infty);\mathcal{H})$. In addition, if $U_0  \in D(A)$, then problem (\ref{main problem}) has a unique regular solution $U\in C([0,\infty);D(A)) \cap C^1([0,\infty);\mathcal{H})$.
\medskip

\subsection{Uniform Decay Rate Estimates}

The energy associated to problem (\ref{main problem}) is now defined by
{\small\begin{eqnarray}\label{energy}
E_u(t) = \underbrace{\frac12 ||u_t (t)||_{L^2(\Omega)}^2}_{\mathcal{K}_u(t)} + \underbrace{\frac12 ||u(t)||_{H^2_*(\Omega)}^2 - \frac{P}2 ||u_x(t) ||_{L^2(\Omega)}^2 + \frac{S}4 ||u_x(t)||_{L^2(\Omega)}^4}_{\mathcal{P}_u(t)},
\end{eqnarray}}
where $ t\geq 0. $ Here, $\mathcal{K}_u(t)$ and $\mathcal{P}_u(t)$ represent, respectively, the kinetic and the elastic potential energy of the model. Moreover, one has the identity of the energy
{\small \begin{eqnarray}\label{indentity of energy}
E_u(t_2) - E_u(t_1) = -\int_Q a(x,y) g(u_t(x,y,t))u_t(x,y,t)\, dx\,dy\,dt,
\end{eqnarray}}
 so that $ 0\leq t_1\leq t_2<+\infty, $ which shows that the energy is monotonic (non increasing).

We observe that when $P<0$, then $E_u(t) \geq 0$ for all $t\geq 0$. In elasticity this situation corresponds to a plate that has been stretched rather than compressed, which does not occur in actual bridges. So, when $P>0$, the most accurate case for bridges, the energy is no longer non negative, which plays an essential role in stabilization of distributed systems. To overcome this situation we will follow ideas from [\cite{Gazzola1'}, section 3]. Let us define
\begin{eqnarray*}
	H_*^1(\Omega)&:=&\{w\in H^1(\Omega): w=0 \text{  on } \{0,\pi\}\times (-l,l)\},\\
	C_*^\infty(\Omega) &:=& \{w\in C^\infty(\overline{\Omega}): \exists\, \varepsilon >0, w(x,y)=0 \hbox{ if }x\in  [0,\varepsilon] \cup [\pi-\varepsilon,\pi]\},
\end{eqnarray*}
which is a normed space when endowed with the Dirichlet norm
\begin{eqnarray}\label{Dirichlet norm}
||u||_{H_*^1(\Omega)}:= \left( \int_\Omega |\nabla u|^2\,dx\;dy\right)^{1/2}.
\end{eqnarray}

Then, we define $H^1_*(\Omega)$ as the completion of $C_*^\infty(\Omega)$ with respect to the norm $||\cdot||_{H^1_*(\Omega)}$. It is not difficult to prove the embedding $H_*^2(\Omega)\hookrightarrow H_*^1(\Omega)$ is compact and, further, that the optimal embedding constant is given by
\begin{eqnarray*}\label{Gamma1}
	\Lambda_1:= \min_{w\in H_*^2(\Omega) }\frac{||w||_{H_*^2(\Omega)}^2}{||w||_{H_*^1(\Omega)}^2},
\end{eqnarray*}
from what follows the Poincar\'e-type inequality
\begin{eqnarray}\label{poincare ineq}
||w||_{H_*^1(\Omega)}^2 \leq \Lambda_1^{-1} ||w||_{H_*^2(\Omega)}^2,~\hbox{ for all } w\in H_*^2(\Omega).
\end{eqnarray}

So, for all $u\in H_*^2(\Omega)$ and since
\begin{eqnarray*}
	||u_x||_{L^2(\Omega)}^2 \leq \int_\Omega |\nabla u|^2\, dx \leq \Lambda_1^{-1}\,||u||_{H_*^2(\Omega)}^2,
\end{eqnarray*}
yields
\begin{eqnarray*}
	-\frac{P}{2} ||u_x||_{L^2(\Omega)}^2 \geq -\frac{P}{2}\Lambda_1^{-1}\,||u||_{H_*^2(\Omega)}^2,
\end{eqnarray*}
and, therefore,
\begin{eqnarray*}
	\frac12 ||u||_{H_*^2(\Omega)}^2 -\frac{P}{2} ||u_x||_{L^2(\Omega)}^2 \geq \frac12 ||u||_{H_*^2(\Omega)}^2 \left(1-P \Lambda_1^{-1} \right).
\end{eqnarray*}

Thus, if $0\leq P \leq \Lambda_1$ from the last inequality we deduce that $\frac12 ||u||_{H_*^2(\Omega)}^2 -\frac{P}{2} ||u_x||_{L^2(\Omega)}^2 \geq 0$, and consequently $E_u(t) \geq 0$, which agrees with the assumption of Theorem 4 in \cite{Gazzola2}. We shall not work in the present paper with negative values of the energy because of the methodology used. It is worth mentioning that, if $E_u(t) <0$ necessarily $P> \Lambda_1$. However, under certain circumstances on the initial data it is possible to consider positive energy and $P$ not so small, namely, $\Lambda_1 < P \leq \Lambda_2$ as in Corollary 8 in \cite{Gazzola2}. It is important to observe that the physical meaningful values of prestressing are precisely when $P \leq \Lambda_2$ since otherwise the equilibrium positions of the plate may take unreasonable shapes as multiple buckling as mentioned in \cite{Gazzola2}. So, from now on we shall assume that $E(t)> 0$.

\medskip
The main result of this section reads as follows:
\begin{thm}\label{th2}
	For any $R > 0$ there exist constants $C$ and $T_0 > 0$, depending on $R$, such
	that, if $E_u((0))\leq R$, then
{\small	\begin{eqnarray}\label{main ineq}
	E_u(T) \leq C \int_0^T \int_\Omega a(x,y)\left[ |u_t(x,y,t)|^2 + |g(u_t(x,y,t))|^2\right]\,dx\,dy\,dt,~\forall T>T_0.
	\end{eqnarray}}
\end{thm}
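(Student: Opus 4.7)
The plan is to transport the multiplier identities of the linear proof to the nonlinear equation \eqref{main problem} and then close by a compactness/unique-continuation argument adapted both to the nonlinear damping and to the corner geometry of $\Omega$. Working with regular solutions (so that $E_u(0)\leq R$ is inherited by the non-increasing energy and, by Remark \ref{remark 2}, Assumption \ref{as:regularity} is free of growth restrictions), I would multiply the first equation of \eqref{main problem} successively by the four multipliers used in Theorem \ref{Teo1}: the radial multiplier $m\cdot\nabla u$ with $m=X-X_0$, the zeroth-order multiplier $u$, the localized radial multiplier $\psi m\cdot\nabla u$, and the cut-off multiplier $\eta u$ built from Figure \ref{fig 4}. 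Each resulting identity picks up two new terms with respect to the linear case: one coming from the nonlocal coefficient $\phi(u)u_{xx}$ and one from the dissipation $a(x,y)g(u_t)$. Since the coefficient $\phi(u)=-P+S\Vert u_x\Vert_{L^2(\Omega)}^2$ is uniformly bounded under the energy bound, the nonlocal contribution reduces to lower-order quantities controllable by $\int_0^T\!\!\int_\Omega(|u|^2+|u_x|^2+|u_y|^2)\,dx\,dy\,dt$ plus a small multiple of $\int_0^T E_u(t)\,dt$ after Young's inequality.

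The damping contributions are handled by Cauchy--Schwarz together with the growth bound \eqref{g}: for every multiplier $q$ one has
\[
\Bigl|\int_0^T\!\!\int_\Omega a\,g(u_t)\,(q\cdot\nabla u)\,dx\,dy\,dt\Bigr|\leq \delta\!\int_0^T\! E_u(t)\,dt + C_\delta\!\int_0^T\!\!\int_\Omega a\,|g(u_t)|^2\,dx\,dy\,dt,
\]
the $\eta u$-analogue being absorbed identically since $\supp\eta$ lies inside the region $\{a\geq a_0\}$, and in the sub/superlinear regimes the extra integrability of $u_t$ provided by Remark \ref{remark 2} is what allows one to close the estimate. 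Collecting all four identities, choosing $\delta$ small, bounding the time-boundary pieces by $E_u(0)+E_u(T)\leq 2E_u(0)$ and using the monotonicity $\int_0^T E_u(t)\,dt\geq T\,E_u(T)$, I expect to arrive, for $T$ larger than some $T_0=T_0(R)$, at the intermediate inequality
\[
E_u(T)\leq C\!\int_0^T\!\!\int_\Omega a\bigl(|u_t|^2+|g(u_t)|^2\bigr)\,dx\,dy\,dt + C\!\int_0^T\!\!\int_\Omega\bigl(|u|^2+|u_x|^2+|u_y|^2\bigr)\,dx\,dy\,dt.
\]

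The remaining and most delicate task — the main obstacle — is to absorb the lower-order integral $\int_0^T\!\!\int_\Omega(|u|^2+|u_x|^2+|u_y|^2)\,dx\,dy\,dt$, in the spirit of Step~3 of Theorem \ref{Teo1}. Assume the contrary and extract a sequence $(u^k)$ of solutions with $E_{u^k}(0)\leq R$ along which the ratio of the damping integral to this lower-order integral tends to zero. As in the linear argument, one splits according to whether the weak limit $u$ is nonzero or vanishes. If $u\neq 0$, the limit satisfies the full nonlinear plate equation together with $u_t\equiv 0$ on $\omega\times(0,T)$; Holmgren's theorem is no longer enough, and I would invoke the nonlinear unique continuation principle of \cite{Kim}. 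Since \cite{Kim} is proved for smooth domains while $\Omega$ has four corners, I would apply it on a family of smooth exhausting subdomains $\Omega_{\varepsilon_n}\nearrow\Omega$ (cf.\ Figure \ref{fig 3}) and pass to the limit using the uniform energy bound together with the geometric tools supplied in the appendix, concluding $u\equiv 0$. In the degenerate case $u=0$, the renormalization $\overline u^k=u^k/c_k$ is bounded in energy and the nonlinear coefficient $\phi(u^k)u^k_{xx}$ scales as $c_k^3$, so it disappears in the limit and the problem reduces exactly to the linear compactness step already settled via Holmgren in Theorem \ref{Teo1}. Combining the two alternatives produces the required contradiction and yields \eqref{main ineq}.
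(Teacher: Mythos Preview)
Your overall strategy differs from the paper's. Rather than rerunning the four multiplier identities on the nonlinear problem, the paper decomposes $u=v+w$, where $v$ solves the undamped linear system \eqref{v} with the same initial data and $w$ solves \eqref{w} with zero data and source $-\phi(u)u_{xx}-a\,g(u_t)$. The already-established linear observability (Theorem~\ref{Teo1}) is applied directly to $v$, yielding $E_u(0)\lesssim\int_0^{T_0}\!\int_\omega|v_t|^2$; the correction $w_t$ is then controlled by the continuity of the source-to-solution map together with Gagliardo--Nirenberg, which leaves only the zero-order remainder $\int_Q|u|^2$ rather than the full $\int_Q(|u|^2+|u_x|^2+|u_y|^2)$ your direct multipliers carry. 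This is more economical and sidesteps redoing the boundary computations. Your direct route can in principle be made to work, but note that the quartic term $\tfrac{S}{4}\|u_x\|^4$ in $E_u$ does not scale linearly under $\bar u^k=u^k/c_k$, so the uniform energy bound for $\bar u^k$ in your Case $u=0$ needs separate justification.

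There is, however, a genuine gap in your unique-continuation step. The result of \cite{Kim} is \emph{not} a nonlinear principle: it is unique continuation for the \emph{linear} Euler--Bernoulli equation, and it requires $w=\partial_\nu w=0$ on the boundary of a smooth domain. You only have $u_t=0$ on $\omega$, with the term $\phi(u)u_{xx}$ still present in the limit equation. The bridge the paper builds (Lemma~\ref{lemma3}) rests on the key observation that $\phi(u)=-P+S\|u_x(t)\|_{L^2}^2$ depends on $t$ alone, so the limit equation is actually \emph{linear} with a time-dependent coefficient: $w_{tt}+\Delta^2w-p(t)w_{xx}=0$. One then splits on whether $p$ is constant. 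If $p\equiv p_0$, Holmgren applied to $v=w_t$ gives $w_t\equiv0$ in $\Omega$, and the resulting stationary problem forces $w=0$. If $p'\not\equiv0$, differentiating the equation on $\omega$ (where $w$ is time-independent) yields $p'(t)w_{xx}=0$, hence $w_{xx}=0$ on $\omega$; a second use of Holmgren then upgrades this to $w=0$ on $\omega$. Only at that point, with $w=\partial_\nu w=0$ on $\partial\Omega_{\varepsilon_n}$, can Kim's theorem be invoked on the smooth subdomains. Your sketch jumps from $u_t=0$ on $\omega$ straight to Kim, skipping this reduction. Incidentally, in your Case $u=0$ the linear piece $-Pu^k_{xx}$ survives the normalization, so the limit is again the $p(t)$-equation rather than the pure linear problem of Theorem~\ref{Teo1}; and the genuinely nonlinear part scales as $c_k^2$, not $c_k^3$.
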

\medskip
\noindent{\bf Proof:}
	It is enough to show that (\ref{main ineq}) holds for regular solutions, and to then use a density argument.\\
	
	\underline{Step 1}\;\; Having in mind we are just considering the nonlinear part of $\phi(u)$, namely, $S\displaystyle{\int_\Omega} u_x^2\,dx$, initially we note that problem (\ref{main problem}) can be written as a sum $u=v+w$ where $u$ and $v$, satisfy, respectively
{\small	\begin{equation}\label{v}
	\begin{cases}
	v_{tt}(x,y,t)+\Delta^2 v(x,y,t) =0, &\mbox{ in } \;\Omega \times (0,+\infty), \\\\
	
	v(0,y,t)=v_{xx}(0,y,t)=v(\pi,y,t)=v_{xx}(\pi,y,t)=0, &(y,t)\in(-l,l)\times (0,+\infty), \\\\
	
	v_{yy}(x,\pm l,t)+\sigma v_{xx}(x,\pm l, t)=0, &(x,t)\in (0,\pi)\times (0, +\infty), \\\\
	
	v_{yyy}(x,\pm l,t)+(2-\sigma)v_{xxy}(x,\pm l, t)=0, &(x,t)\in(0,\pi)\times(0,+\infty),\\\\
	
	v(x,y,0)=u_0(x,y),\; v_t(x,y,0)=u_1(x,y), &\mbox{ in } \;\Omega,
		\end{cases}
		\end{equation}}
	
	and
	
{\small	\begin{equation}\label{w}
	\begin{cases}
	w_{tt}(x,y,t)+\Delta^2 w(x,y,t) = -\phi(u) u_{xx}- a(x,y)g(u_t(x,y,t)), \quad\mbox{ in } \;\Omega \times (0,+\infty), \\\\
	
	w(0,y,t)=w_{xx}(0,y,t)=w(\pi,y,t)=w_{xx}(\pi,y,t)=0,\, (y,t)\in(-l,l)\times (0,+\infty), \\\\
	
	w_{yy}(x,\pm l,t)+\sigma w_{xx}(x,\pm l, t)=0,\qquad\qquad\qquad\qquad\quad\, (x,t)\in (0,\pi)\times (0, +\infty), \\\\
	
	w_{yyy}(x,\pm l,t)+(2-\sigma)w_{xxy}(x,\pm l, t)=0,\qquad\qquad\qquad\! (x,t)\in(0,\pi)\times(0,+\infty),\\\\
	
	w(x,y,0)= w_t(x,y,0)= 0, \qquad\qquad\qquad\qquad\quad\,\qquad\qquad\qquad\qquad\quad\,\qquad\!\! \mbox{ in } \;\Omega.
	
	\end{cases}
	\end{equation}}
	
	From now on we shall denote $E_u$,$E_v$ and $E_w$ the energies associated to $u,v$ and $w$. Then, once the map $t \mapsto E_u(t)$ is non increasing and exploiting the observability inequality associated to the linear problem $v$, we infer for $T_0>0$ large enough
	\begin{eqnarray}\label{eq1}
	E_u(T_0) &\leq& E_u(0) \\
	&=& \frac12 ||u_1||_2^2 + \frac12 ||u_0||_{H^2_\ast(\Omega)}^2 - \frac{P}2||u_{0,x}||_2^2 + \frac{S}4 ||u_{0,x}||_2^4\nonumber\\
	&\leq& L_1\left( ||u_1||_2^2 + ||u_0||_{H^2_\ast(\Omega)}^2\right)\nonumber\\
	&=& 2L_1 E_v(0) \nonumber\\
	&\leq& L_2 \int_0^{T_0} \int_\omega |v_t|^2\,dxdydt \nonumber\\
	&\leq& L_3 \int_0^{T_0} \int_\omega \left[|u_t|^2 + |w_t|^2\right]\,dxdydt \nonumber\\
	&\leq& L_4 \left(\int_0^{T_0} \int_\Omega a(x,y) |u_t|^2\,dxdydt + \int_0^{T_0} \int_\Omega |w_t|^2\,dxdydt\right),\nonumber
	\end{eqnarray}
	where $L_i$, $i=1,2,3,4$ are positive constants and the last inequality holds since $a(x,y) \geq a_0>0$ in $\omega$.\\
	
	We also mention  that to obtain the third line of (\ref{eq1}),  we used the fact that
	$$||u_{0,x}||_2^4\leq ||u_0||_{H^2_\ast(\Omega)}^4$$
	and
	$$||u_0||_{H^2_\ast(\Omega)}^2\leq 2E_u(0)\leq 2R.$$
	
	\underline{Step 2}\;\; Now, setting $f:= -\phi(u) u_{xx} - a(x,y)g(u_t(x,y,t)) \in L^2(0,T; L^2(\Omega))$ (see remark \ref{remark 2}) and $w(0)=w_t(0)=0$,  $\mathcal{L}:= L^\infty(0,T;H^2_\ast(\Omega))\times L^\infty(0,T;L^2(\Omega)) $ and $ \mathcal{H}:=H^2_\ast(\Omega)\times L^2(\Omega) \times L^2(0,T;L^2(\Omega)), $ it is known that  the linear map
{	\begin{align*}
	\{w(0),w_t(0),f\}\in \mathcal{H} \mapsto \{w,w_t\}\in \mathcal{L}
	\end{align*}}
	is continuous, we deduce
	\begin{eqnarray*}
		||w||_{L^\infty(0,T;H^2_\ast(\Omega))}^2 + ||w_t||_{L^\infty(0,T;L^2(\Omega))}^2 \leq C ||f||_{L^2(0,T;L^2(\Omega))}^2,
	\end{eqnarray*}
	from which follows that
	\begin{eqnarray}\label{eq2}
	||w_t||_{L^2(0,T;L^2(\Omega))}^2 \leq L_5 \left[||\phi(u) u_{xx}||_{L^2(0,T;L^2(\Omega))}^2 + ||a(\cdot) g(u_t)||_{L^2(0,T;L^2(\Omega))}^2  \right].
	\end{eqnarray}
	where $L_5$ is a positive constant.
	
	Combining (\ref{eq1}) and (\ref{eq2}) yields
{	\begin{align}\label{eq3}
	E_u(T_0)&\leq L_6 \left( \int_0^{T_0} \int_\Omega a(x,y) \left[|u_t|^2 + |g(u_t)|^2\right]\,dxdydt\right.\nonumber \\
  &	\left.+ \int_0^{T_0} \int_\Omega |\phi(u) u_{xx}|^2\,dxdydt\right).
	\end{align}}
	
	In the sequel let us analyse the term $I:= \displaystyle{\int_0^{T_0}} \displaystyle{\int_\Omega} |\phi(u) u_{xx}|^2\,dxdydt$. Remembering that we are considering $E_u(0) \leq R$, one has,
	\begin{eqnarray*}
		|I|&=& S^2 \int_0^{T_0} ||u_x(t)||_2^4 \int_\Omega|u_{xx}|^2\,dxdydt\\
		&=& S^2\int_0^{T_0} ||u_x(t)||_2^4 ||u_{xx}(t)||_2^2 \, dt\nonumber\\
		&\leq& L_7 \int_0^{T_0} ||u_x(t)||_2^4 ||u(t)||_{H_\ast^2(\Omega)}^2 \, dt\nonumber\\
		&\leq& L_8 E_u(0)\int_0^{T_0} ||u_x(t)||_2^4 \, dt\nonumber\\
		&\leq& L_9 \int_0^{T_0} ||\Delta u(t)||_2 \,||u(t)||_2 \, dt,\nonumber
	\end{eqnarray*}
	where the last inequality comes from the Gagliardo-Nirenberg inequality and $L_i,~i=7,8,9$ are positive constants. The last inequality yields
	\begin{eqnarray}\label{eq4}
	|I| &\leq& \varepsilon \int_0^{T_0} E_u(t)\,dt + C_{\varepsilon} \int_0^{T_0} ||u(t)||_2^2\,dt\\
	&\leq& \varepsilon T_0 E_u(0)+ C_{\varepsilon} \int_0^{T_0} ||u(t)||_2^2\,dt\nonumber
	\end{eqnarray}
	where $\varepsilon$ is an arbitrary positive constant. Thus, from (\ref{eq3}) and (\ref{eq4}) and making use of the identity of the energy
	\begin{eqnarray*}
		E_u(T_0) - E_u(0) = -\int_0^{T_0} \int_\Omega a(x,y) g(u_t) u_t \,dxdydt,
	\end{eqnarray*}
	we deduce
{\small	\begin{eqnarray*}
		E_u(T_0)(1-\varepsilon T_0)\leq L_{10} \left( \int_0^{T_0}\!\!\! \int_\Omega a(x,y) \left[|u_t|^2 + |g(u_t)|^2\right]\! dxdydt + \int_0^{T_0}\!\!\! \int_\Omega |u|^2 dxdydt \right).
	\end{eqnarray*}}
	
	Choosing $\varepsilon$ sufficiently small and since $E_u(T) \leq E_u(T_0)$ for all $T>T_0$ it follows that
{\small	\begin{eqnarray}
	E_u(T) \leq L_{11}  \left( \int_0^{T} \int_\Omega a(x,y) \left[|u_t|^2 + |g(u_t)|^2\right]\,dxdydt + \int_0^{T} \int_\Omega |u|^2 dxdydt \right),
	\end{eqnarray}}
	for all $T>T_0$.
	
	\underline{Step 3}\;\; It remains to estimate the term $\displaystyle{\int_Q}  \vert u\vert^2\;dx\;dy\;dt$ in terms of the damping term. More precisely, we shall prove the existence of a positive constant $C$ such that
	
	\begin{equation}\label{lem}
	\int_Q  \vert u\vert^2\;dx\;dy\;dt \leq C \int_Q a(x, y) \vert g(u_t)\vert^2\;dx\;dy\;dt,
	\end{equation}
	For this purpose we need the following unique continuation result:
	\begin{lemma}\label{lemma3}
		If the function  $w$  satisfies
{\small		\begin{equation}\label{lem1}
		\left\{
		\begin{array}{lcr}
		w_{tt}(x, y, t) + \Delta^2 w(x, y, t)-p(t) w_{xx}(x, y, t)=0, &\mbox{ in } \;\;Q,& \\\\
		
		w(0, y, t)=w_{xx}(0,y)=w(\pi, y, t)=w_{xx}(\pi, y, t)=0, &(y, t)\in(-l,l)\times (0, T),& \\\\
		
		w_{yy}(x, \pm l, t)+\sigma w_{xx}(x, \pm l, t)=0, &(x, t)\in (0,\pi)\times (0,T),& \\\\
		
		w_{yyy}(x, \pm l, t)+(2-\sigma)w_{xxy}(x, \pm l, t)=0, &(x, t)\in (0,\pi)\times (0, T),&\\\\
		
		w_t(x, y, t)= 0, &\mbox{in}\;\;\omega\times (0, T).&
		\end{array}
		\right.
		\end{equation}}
		Then we have $w=0$ in $Q$.
	\end{lemma}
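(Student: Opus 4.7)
The plan is to reduce Lemma~\ref{lemma3} to Kim's unique continuation principle for the plate equation \cite{Kim}, and to bridge the gap between that principle (stated for domains with smooth boundary) and our rectangular geometry by the exhaustion procedure announced in the introduction.

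First I would set $v:=w_t$, so that the hypothesis reads $v\equiv 0$ in $\omega\times(0,T)$, and differentiate the equation in $t$ to obtain
\begin{equation*}
v_{tt}+\Delta^2 v - p(t)\,v_{xx} = p'(t)\, w_{xx}\quad\text{in }Q,
\end{equation*}
together with the boundary conditions inherited from $w$. Because $p(t)=\phi(u)(t)$ is smooth in $t$ under the regularity in force on the nonlinear problem, the right-hand side is a lower-order perturbation and the resulting system is of the form to which Kim's theorem applies, modulo the corners of $\Omega$.

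Second, to handle the corners I would invoke the geometric construction detailed in the appendix to produce a sequence $\{\Omega_{\varepsilon_n}\}$ of smooth subdomains with $\Omega_{\varepsilon_n}\Subset\Omega$ and $\Omega_{\varepsilon_n}\nearrow\Omega$, obtained by rounding each corner at scale $\varepsilon_n\to 0^+$ and arranged so that $\omega\cap\Omega_{\varepsilon_n}$ still contains a nonempty open set. On each $\Omega_{\varepsilon_n}$, Kim's principle is an interior statement and yields $v\equiv 0$ on $\Omega_{\varepsilon_n}\times(0,T)$; letting $n\to\infty$, every point of $\Omega\times(0,T)$ eventually lies inside $\Omega_{\varepsilon_n}\times(0,T)$, so $v=w_t\equiv 0$ throughout $Q$.

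Finally, since $w_t\equiv 0$, the function $w$ is time-independent; writing $w(x,y,t)=w_0(x,y)$ and substituting into the PDE gives $\Delta^2 w_0 - p(t)(w_0)_{xx}=0$ for every $t\in(0,T)$. When $p(t)$ is non-constant on $(0,T)$, subtracting two values of $t$ forces $(w_0)_{xx}\equiv 0$ and $\Delta^2 w_0\equiv 0$, and the boundary conditions \eqref{001} combined with Theorem~3.2 of \cite{Gazzola1} (invoked exactly as in the proof of Theorem~\ref{Teo1}) give $w_0\equiv 0$; the constant-$p$ case is analogous once it is observed that the associated stationary operator is coercive on $H^2_*(\Omega)$ in the admissible prestressing range. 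The hard part will be the second step: verifying that the approximating smooth domains can be chosen so that Kim's minimal observation time and the constants in the unique continuation estimate do not blow up as $\varepsilon_n\to 0$, since any uncontrolled $n$-dependence would obstruct the passage to the limit.
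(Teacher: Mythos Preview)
Your Step~2 contains a genuine gap. After differentiating in $t$ you obtain
\[
v_{tt}+\Delta^2 v - p(t)\,v_{xx} = p'(t)\, w_{xx},
\]
and you then want to invoke Kim's unique continuation to force $v\equiv 0$. But the right-hand side $p'(t)\,w_{xx}$ is \emph{not} a lower-order perturbation of the operator acting on $v$: it is a source term involving the unknown $w$, related to $v$ only through $w=\int v\,dt + w(\cdot,\cdot,0)$. Kim's Carleman estimate (and Holmgren, for that matter) applies to equations of the form $v_{tt}+\Delta^2 v + (\text{lower order in }v)=0$; an inhomogeneous term that need not vanish where $v$ vanishes blocks the argument. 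You cannot conclude $v\equiv 0$ from this equation alone.

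The paper circumvents this by splitting into two cases. If $p$ is constant, then $p'\equiv 0$ and your differentiated equation \emph{is} homogeneous, so Holmgren (not Kim) gives $v=w_t\equiv 0$ directly, after which the stationary problem and \cite{Gazzola1} finish. If $p$ is non-constant, the paper does \emph{not} differentiate globally; instead it restricts to $\omega$, where $w_t=0$ already forces $w$ to be time-independent, so the equation there reads $\Delta^2 w - p(t)w_{xx}=0$. Differentiating this \emph{in $\omega$} yields $p'(t)w_{xx}=0$, hence $w_{xx}=0$ in $\omega$, and a further Holmgren argument with the boundary data gives $w\equiv 0$ in all of $\omega$. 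Only then is Kim's theorem invoked, and it is applied to $w$ itself (not $w_t$) on each $\Omega_{\varepsilon_n}$: since $\partial\Omega_{\varepsilon_n}\subset\omega$ one now has the full Cauchy data $w=\partial_\nu w=0$ on $\partial\Omega_{\varepsilon_n}$, and $w$ satisfies the homogeneous plate equation inside, so Kim yields $w\equiv 0$ on $\Omega_{\varepsilon_n}$.

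Finally, your worry about constants or minimal times blowing up as $\varepsilon_n\to 0$ is unfounded here: the conclusion $w\equiv 0$ on $\Omega_{\varepsilon_n}$ is a qualitative unique-continuation statement applied for each fixed $n$, and one simply takes the union over $n$. No uniformity in $n$ is needed.
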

\noindent{\bf Proof:}
		We follow the arguments of \cite{Tucsnak}. If $p(t)=p_0$ for any $t\in [0, T]$, then the function $v=w_t$ satisfies in the distributions sense the system
{\small		\begin{equation}\label{lem2}
		\left\{
		\begin{array}{lcr}
		v_{tt}(x, y, t)+ \Delta^2 v(x, y, t)-p_0 v_{xx}(x, y, t)=0, &\mbox{ in } \;\;Q,& \\\\
		
		v(0, y, t)=v_{xx}(0, y, t)=v(\pi, y, t)=v_{xx}(\pi, y, t)=0, &(y, t)\in(-l,l)\times (0, T),& \\\\
		
		v_{yy}(x, \pm l, t)+\sigma v_{xx}(x, \pm l, t)=0, &(x, t)\in (0,\pi)\times (0, T),& \\\\
		
		v_{yyy}(x, \pm l, t)+(2-\sigma)v_{xxy}(x, \pm l, t)=0, &(x, t)\in (0,\pi)\times (0, T),&\\\\
		
		v(x, y, t)=0, &\mbox{ in }\;\; \omega\times (0,T).&
		\end{array}
		\right.
		\end{equation}}
		Using Holmgren's uniqueness theorem we conclude  that $v=0$ in $Q$.  From (\ref{lem1}), it follows that
		\begin{equation}\label{lem3}
		\left\{
		\begin{array}{lcr}
		\Delta^2 w(x,y)-p_0 w_{xx}=0, &\mbox{ in } \;\Omega,& \\\\
		
		w(0,y)=w_{xx}(0,y)=w(\pi,y)=w_{xx}(\pi,y)=0, &y\in(-l,l),& \\\\
		
		w_{yy}(x,\pm l)+\sigma w_{xx}(x,\pm l)=0, &x\in (0,\pi),& \\\\
		
		w_{yyy}(x,\pm l)+(2-\sigma)w_{xxy}(x,\pm l)=0, &x\in (0,\pi).&
		
		\end{array}
		\right.
		\end{equation}
		
		The results in \cite{Gazzola1', Gazzola2} show that $w=0$ in $Q$.\\
		
		Let us now suppose that $p^\prime(t)\neq 0$ for $t$ varying in a subset of strictly positive measure of $[0, T]$. The first equation in (\ref{lem1}) and the fact that $w(x, y, t)=w(x, y, 0)$ if $(x, y)\in\omega$ we obtain
		$$\Delta^2 w(x, y, t) -p(t)w_{xx}(x, y, t)=0 \;\;\mbox{in}\;\; \omega\times (0, T).$$
		By deriving with respect to time  the previous equality, we get
		$$p^\prime(t) w_{xx}(x, y, t)=0 \;\;\mbox{in}\;\; \omega\times (0, T).$$
		Taking into account that $p^\prime(t)\neq 0$, we have
		$$w_{xx} (x, y)=0\;\;\;\mbox{in}\;\;\; \omega.$$
		This relation with the boundary conditions in (\ref{lem3}) yields, by Holmgren's uniqueness theorem,
		$$w=0\;\;\;\mbox{in}\;\;\omega.$$
		
		Now, by using Proposition \ref{smooth}, it is possible to find a sequence of sub-domains $(\Omega_{\epsilon_n})_{\epsilon_n>0} $ of $\Omega$  such that  $\Omega\setminus \omega \subset \Omega_{\epsilon_n}$ and  $(\Omega_{\epsilon_n})$ converges to $\Omega$ uniformly, when $\epsilon_n \to 0$. (see figure \ref{fig 5}).\\

		\begin{figure}[h!]
\centering
\includegraphics[width=0.7\linewidth]{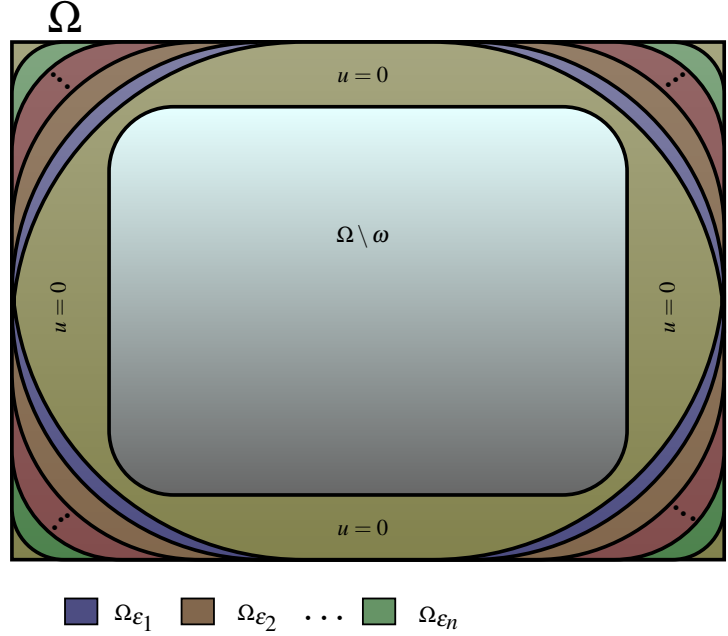}
\caption{Sequence of sub-domains $(\Omega_{\epsilon_n})_{\epsilon_n>0} $ of $\Omega$.}
\label{fig 5}
\end{figure}

		Furthermore, since $w=0$ in $\omega$, we have $w=\partial_\nu w =0$ on $\partial \Omega_\epsilon$ for all $\epsilon>0$.\\
		
		Now using Theorem 2.1 of \cite{Kim}, we obtain that $w=0$ in $\Omega_\epsilon$, for all $\epsilon>0$. Hence, by the uniform convergence, we have $w=0$ in $Q$. $ \qedsymbol. $
		
		\medskip

	Now let us suppose that (\ref{lem}) is not satisfied and let $(u_0^k, u_1^k)_k $ be a sequence of initial data where the corresponding solutions $(u^k)_k$ of (\ref{main problem}), with $E_u^k(0)$ assumed uniformly bounded in $k$, satisfy
	\begin{equation}\label{lem4}
	\lim_{k\to+\infty} \frac{\displaystyle{\int_Q}  \vert u^k\vert^2 \;dx\;dy\;dt}{\displaystyle{\int_Q} a(x, y) \vert g(u^k_t)\vert^2\;dx\;dy\;dt }=+\infty.
	\end{equation}
	
	Define $$\lambda_k=\left[\displaystyle{\int_Q}  \vert u^k\vert^2 \;dx\;dy\;dt\right]^{\frac{1}{2}},$$
	and
	$$w^k=\frac{u^k}{\lambda_k}.$$
	By using the above equalities and (\ref{lem4}), we have
	\begin{equation}\label{lem5}
	\displaystyle{\int_Q}  \vert w^k\vert^2\;dx\;dy\;dt=1\,
	\end{equation}
	and
	\begin{equation}\label{lem6}
	\displaystyle{\int_Q}  a(x, y) \frac{\left\vert g(u^k_t)\right\vert^2}{\lambda_k^2}\;dx\;dy\;dt\to 0\,\;\;\;\mbox{as}\;\; k\to +\infty.
	\end{equation}
	Besides $w^k$ satisfies
{\small	\begin{equation}\label{lem7}
	\left\{
	\begin{array}{lcr}
	w^k_{tt}(x, y, t) + \Delta^2 w^k(x, y, t)+ \left(-P + S\lambda_k^2 \displaystyle{\int_\Omega} (w^k_x)^2\;dx\;dy\right) w^k_{xx}(x, y, t)\\+ a(x, y) \frac{g(u_t^k)}{\lambda_k}=0,\; \qquad\qquad\qquad\qquad\qquad\qquad\qquad\qquad\qquad\,\,\,\,\,\qquad\qquad\mbox{ in } \;Q, \\\\
	
	w^k(0, y, t)=w^k_{xx}(0,y)=w^k(\pi, y, t)=w^k_{xx}(\pi, y, t)=0, \;(y, t)\in(-l,l)\times (0, T), \\\\
	
	w^k_{yy}(x, \pm l, t)+\sigma w^k_{xx}(x, \pm l, t)=0,\; \qquad\qquad\qquad\qquad\,\quad(x, t)\in (0,\pi)\times (0,T), \\\\
	
	w^k_{yyy}(x, \pm l, t)+(2-\sigma)w^k_{xxy}(x, \pm l, t)=0, \qquad\qquad\qquad\!\;(x, t)\in (0,\pi)\times (0, T).
	
	\end{array}
	\right.
	\end{equation}}

	As in the previous section, we have similar convergence results for the sequence $(w^k)_k$ as in (\ref{20}), (\ref{21}) and (\ref{22}). We denote by $w$ the limit of $(w^k)_k$. In addition since $(\lambda_k$) is bounded in $\mathbb{R}$, we obtain, by extracting a subsequence still denoted by $(\lambda_k)_k$, that
	$$\lambda_k \to \lambda\;\;\mbox{in}\;\;\mathbb{R}, \;\;\mbox{when}\;\;k\to +\infty.$$
	Passing to the limit in the equation, when $k\to +\infty$, we get
{\small	\begin{eqnarray*}
\left\{
		\begin{array}{lcr}
			w_{tt}(x,y,t)+\Delta^2 w(x,y,t)-p(t)w_{xx}(x, y, t)=0, &\mbox{ in } \;\Omega \times (0, T),& \\\\
			
			w(0,y,t)=w_{xx}(0,y,t)=w(\pi,y,t)=w_{xx}(\pi,y,t)=0, &(y,t)\in(-l,l)\times (0, T),& \\\\
			
			w_{yy}(x,\pm l,t)+\sigma w_{xx}(x,\pm l, t)=0, &(x,t)\in (0,\pi)\times (0, T),& \\\\
			
			w_{yyy}(x,\pm l,t)+(2-\sigma)w_{xxy}(x,\pm l, t)=0, &(x,t)\in(0,\pi)\times(0, T),&\\\\
			
			w_t(x,y,t)=0, &\mbox{ in } \;\omega\times (0, T),&
			
		\end{array}
		\right.
	\end{eqnarray*}}
	where $p(t)= P -S\lambda^2 \displaystyle{\int_\Omega} w_x^2\;dx\;dy$.\\
	
	Using Lemma \ref{lemma3}, we have $w=0$ in $Q$, which is in contradiction with (\ref{lem5}) and the fact that $ (w^k)_k$ converges strongly to $w$ in $L^2(0, T; L^2(\Omega))$ and consequently (\ref{main ineq}) holds true. ~$ \qedsymbol  $
\\
	

{\color{black}
Henceforth we will also use the notation
\begin{equation}\label{def:damping}
\bfD_{a}^{b}\big(g(s); u_{t}\big) \dfn
\int_{a}^{b}\int_{\Omega}a(x,y)g(u_{t}) u_{t} \,dx\,dt,
\end{equation}
and the identity of the energy (\ref{indentity of energy}) now reads as follows:
\begin{equation}\label{energy-identity}
E(t_{2}) + \bfD_{t_1}^{t_2}\big(g(s);u_{t}\big)= E(t_{1}) \quad \text{ for all } t_2\geq t_1\geq
0,
\end{equation}

\medskip

The main result of this paper explicitly quantifies the asymptotic decay rates of the finite energy for the system \eqref{main problem}.
\begin{thm}\label{thm:decay}
Denote by $(u,u_t)$ a weak solution of the problem \eqref{main problem}.  Suppose the $a = a(x, y) \in  L^\infty(
\Omega)$ is assumed to be a nonnegative
bounded function such that $a(x,y) \geq a_0 > 0$ a.e. in $\omega$ for some non empty open subset $\omega$ around the boundary $\partial \Omega$� of $\Omega$
 and some positive constant $a_0 > 0$.   Define $h$  to be concave, strictly increasing function,  vanishing at $0$ and such that
\begin{equation}\label{def:h0}
h(sg(s)) \geq s^{2} + g(s)^2,\text{ for }\left| s\right| \leq 1,
\end{equation}
(which can always be constructed since $g$ is continuous increasing $g(0)=0$).

In addition, if $g$  is not linearly bounded at infinity (of order  $\cO$ other than $1$ according to the Definition \ref{def:order}), then let the Assumption \ref{as:regularity} be satisfied with the corresponding integrability indices $p_{0}$.  Next, define

\[
\bfC = \|u_t\|_{L^\infty\big(\bbR_+; L^{p_{0}}(\Omega) \big)}^{\frac{|1-\cO(g)|}{p_{0}-1-\cO(g)}},
\qquad \tl{h}(s)
= s^{\frac{p_{0}- 2 \max\{\cO(g),1\} }{p_{0}-1-\cO(g)}}.
\]

Conclusion: then there exist constants $T_0 \geq T >0$ such that the energy $E(t)$ given by \eqref{energy} satisfies
\[
E_u(t)\leq S\left( \frac{t}{T}-1\right),\quad \forall t>T_0,
\]
where $\lim_{t\to \infty}S(t)=0$. Moreover, suppose for some $\mathfrak{f}\in \{Id, h,\tl{h}\}$
\[
\lim_{s\to 0^{+}} \frac{[s+ h(s) + \tl{h}(s)] - \mathfrak{f}(s)}{\mathfrak{f}(s)}=0,
\]
then $S(t)$ solves the monotone ODE
\begin{equation}\label{ode}
\frac{d}{dt}S(t)+H^{-1}\big((1-\del)S(t)\big)=0,\quad S(0)=E_u(0).
\end{equation}
where parameter $\del>0$ can be chosen to be arbitrarily small at the expense of growing $T_{0}$. The map $H$, in this case is given by
\[
 H(s) = C(\bfC)C_{L}^{2} \mathfrak{f}(s)
\]
where $C>0$ depends only on the functions $I,h,\tl{h}$), while $C_{L}$ is the linear observability constant from (\ref{3}). (Essentially $H$ is proportional to the map whose growth near the origin is the fastest from among $I, h,\tl{h}$).
\end{thm}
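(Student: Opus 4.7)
The plan is to combine the nonlinear observability-type inequality from Theorem \ref{th2} with a careful pointwise/integral analysis of the nonlinear feedback, and then apply a Lasiecka--Tataru type ODE comparison lemma to convert the resulting recursive inequality into the ODE \eqref{ode}. The starting point is (\ref{main ineq}), namely
\[
E_u(T)\leq C_L \int_0^T\!\!\int_\Omega a(x,y)\big[|u_t|^2+|g(u_t)|^2\big]\,dx\,dy\,dt,
\qquad T>T_0,
\]
so what remains is to dominate the right-hand side by the damping functional $\bfD_0^T(g(s);u_t)$ composed with an appropriate map.

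First I would split the space--time cylinder $Q=\Omega\times(0,T)$ into the ``small velocity'' set $Q_{s}=\{|u_t|\leq 1\}$ and the ``large velocity'' set $Q_{\ell}=\{|u_t|>1\}$. On $Q_{s}$ the hypothesis \eqref{def:h0} gives $|u_t|^2+|g(u_t)|^2\leq h(g(u_t)u_t)$ pointwise; integrating and applying Jensen's inequality to the concave $h$ yields
\[
\int\!\!\!\int_{Q_{s}} a\big[|u_t|^2+|g(u_t)|^2\big]\,dx\,dy\,dt
\;\leq\; C\,T|\Omega|\, h\!\left(\tfrac{1}{T|\Omega|}\,\bfD_{0}^{T}\big(g(s);u_t\big)\right),
\]
which, up to constants that are absorbed, is of the form $C\,h\big(\bfD_0^T(g;u_t)\big)$. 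On $Q_{\ell}$ the polynomial growth \eqref{g} implies $|u_t|^2+|g(u_t)|^2\lesssim |u_t|^{r+1}+|u_t|^{2r}$ with $r=\cO(g)$. Both terms can be interpolated between $|u_t|^{r+1}\sim g(u_t)u_t$ (controlled by the dissipation) and the $L^{p_{0}}$-norm controlled by Assumption \ref{as:regularity}, via Hölder with the unique exponent $\theta$ making $\theta(r{+}1)+(1{-}\theta)p_0=2\max\{r,1\}$. One computes $\theta=(p_0-2\max\{r,1\})/(p_0-1-r)$, which is exactly the exponent defining $\tl{h}$, and obtains
\[
\int\!\!\!\int_{Q_{\ell}}a\big[|u_t|^2+|g(u_t)|^2\big]\,dx\,dy\,dt
\;\leq\; C\,\bfC\,\tl{h}\!\big(\bfD_0^T(g(s);u_t)\big).
\]

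Combining the two estimates with (\ref{main ineq}) and the energy identity \eqref{energy-identity} (which gives $\bfD_0^T=E_u(0)-E_u(T)$), I obtain a recursive inequality of the shape
\[
E_u(T)\;\leq\; \Phi\!\big(E_u(0)-E_u(T)\big),
\qquad
\Phi(s)=C_0\big[s+h(s)+\tl{h}(s)\big],
\]
valid for all $T\geq T_0$ and all initial data with $E_u(0)\leq R$ (the constants depending on $R$ and on $\bfC$). Iterating this on the successive intervals $[nT,(n{+}1)T]$ and invoking the monotonicity of $E_u$ produces a discrete sub-solution of a monotone ODE. An application of the Lasiecka--Tataru nonlinear comparison lemma then provides a monotone $S(t)\to 0$ with $E_u(t)\leq S(t/T-1)$ for $t>T_0$, where $S$ solves \eqref{ode} with $H^{-1}$ built from $\Phi$. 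When one of $\{Id,h,\tl{h}\}$ dominates the others near $0$ (the hypothesis $\lim_{s\to 0^{+}}([s+h+\tl{h}]-\mathfrak f)/\mathfrak f=0$), the factor $(1-\delta)$ in \eqref{ode} absorbs the subdominant terms and $H$ reduces to $C(\bfC)C_L^2\,\mathfrak f$ as stated.

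The main obstacle, and the step I expect to require the most care, is the large-velocity estimate on $Q_\ell$ when $\cO(g)\neq 1$: the Hölder interpolation is sharp precisely at the critical exponent appearing in $\tl{h}$, and one must also verify that the $L^{p_0}$-norm of $u_t$ postulated in Assumption \ref{as:regularity} is truly uniform in time --- which, as noted in Remark \ref{remark 2}, is automatic for regular solutions by the dissipativity of the system together with $\dim\Omega=2$, but must be invoked to allow the density argument extending the estimate to weak solutions. A secondary technicality is that the constants in the observability inequality (Theorem \ref{th2}) depend on $R$, hence on $E_u(0)$; this dependence must be tracked through the iteration, which is the reason the theorem is stated for initial energies in a bounded set and the time $T_0$ is allowed to grow with $R$ and with the small parameter $\delta>0$.
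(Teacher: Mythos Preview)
Your proposal is correct and follows essentially the same route as the paper: split $\Omega\times(0,T)$ into $\{|u_t|\le 1\}$ and $\{|u_t|>1\}$, handle the former by the concave map $h$ together with Jensen's inequality, handle the latter by a H\"older interpolation between the dissipation integrand $g(u_t)u_t\sim|u_t|^{r+1}$ and the $L^{p_0}$-norm furnished by Assumption~\ref{as:regularity}, and then feed the resulting recursive inequality $E_u(T)\le \Phi(E_u(0)-E_u(T))$ into the Lasiecka--Tataru comparison lemma. The paper carries out the H\"older step separately for the superlinear and sublinear cases (working with $g(u_t)^2$ and $u_t^2$ respectively), whereas you phrase it as a single interpolation targeting the exponent $2\max\{r,1\}$; both lead to the same exponent $\tfrac{p_0-2\max\{r,1\}}{p_0-1-r}$ defining $\tl h$.
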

\begin{proof} {Initially, before to prove this theorem, let's give some examples in order to clarify our ideas.}
\subsection{Examples of energy decay rates}\label{sec:decay-examples}

\medskip

\subsubsection{Linearly bounded damping}

\medskip

If the feedback is linear (or bounded above and below by linear maps with positive slopes), e. g., $g(s)=s,$ then the function $H$ in \eqref{ode} is linear, hence $S$ solves an equation of the form $S' + CS = 0$ which has an exponentially decaying solution. Specifically, there exists a constant $C= C(E_u(0))$ dependent on the initial energy and some $k>0$ such that%
\[
E(t)\leq Ce^{-kt}E(0) \qquad t>0
\]
In this setting no assumptions on the regularity of solutions, beyond the finite energy level are necessary.

\subsubsection{Nonlinear damping near the origin}

\medskip

The decay rates computed in the Table \eqref{table1} assume that the feedback map is linearly bounded at infinity, i.e. $\cO(g)=1$  or, equivalently,  $a|s| \leq g(s) \leq b|s|$ for $|s|>1$, with some positive constants $a,b$.

\begin{table}[h]
\begin{center}
\begin{tabular}{|c|c|c|c|}\hline
&	\multicolumn{3}{c|}{feedback map is linearly bounded at infinity (for $|s|>1$)}\\
&	\multicolumn{3}{c|}{feedback near the origin is not linearly bounded (for $|s|\leq1$)}\\ \hline
&	 &  &      \\
&	$g(s)$\,  $=s^{\theta<1}$   &   $g(s)$\,  $=s^{r>1}$                    &    $g(s)$   $=s^3 e^{-1/s^2}$ \\
  \hline regularity &  \multicolumn{3}{c|}{finite-energy}  \\ \hline &&&\\
 $h(s)$              &	  $2s^{\frac{2\theta}{\theta+1}}$                                  &   $2s^{\frac{2}{r+1}}$           &                                                             \\ \hline &&&\\
$H^{-1}((1-\del)s)$              &	  $c s^{\frac{\theta+1}{2\theta}}$           &   $c s^{\frac{r+1}{2}}$         &       $c_1 s^2 \exp(-c_2/s)$                                                       \\ \hline &&&\\
$S(t)$  in \eqref{ode} &  $\ds \bigg[ {\ss\frac{c(1-\theta)}{2\theta }}(t+c_0)\bigg]^{-\frac{2\theta}{1-\theta}}$ & $\ds \bigg[ {\ss\frac{c(r-1)}{2 }}(t+c_0)\bigg]^{-\frac{2}{r-1}}$
&  $\ds \frac{c_2}{\ln(c_1 c_2 t + c_0)}$\\  &&&\\\hline
\end{tabular}
\end{center}
\caption{Asymptotic energy decay rates in the case when the feedback $g(s)$  linearly bounded at infinity (for $|s|>1$) and   is not linearly bounded only near the origin (for $|s|\leq1$).}
\label{table1}
\end{table}

\subsubsection{Sublinear or superlinear damping at infinity}

The asymptotic decay rates computed in Table \ref{table2}  assume that the feedback maps is linearly bounded at the origin, and  has the order other than $1$ at infinity according to the definition \eqref{def:order}. In this case uniform decay in finite-energy space requires uniform regularity of solutions in stronger topology.

\begin{table}[h]
\begin{center}
\begin{tabular}{|c|c|c|}\hline
&	\multicolumn{2}{c|}{feedback linearly bounded near the origin (for $|s| <1$),} \\
&	\multicolumn{2}{c|}{feedback is not linearly bounded at infinity (for $|s|\geq 1$)} \\
 \hline &&\\
                                 &	 $g(s)$\,  $=s^{\theta<1}$   &    $g(s)$\,  $=s^{r>1}$                     \\ \hline&&\\
                                 regularity &   $u_t \in L^\infty (\mathbb{R}_+; L^{p_0}(\Omega))$ & $u_t \in L^\infty (\mathbb{R}_+;  L^{p_0}(\Omega))$  \\  &&\\
                                  &  $q\dfn p_0$ \text{ or }$ p > 2$  & $q\dfn p_0$ \text{ or }$ p > 2r$  \\ \hline&&\\
 $\tl{h}(s)$              &	  $s^{\frac{q-2}{q-\theta-1}}$                                  &   $s^{\frac{q-2r}{q-r-1}}$                       \\ \hline &&\\
$H^{-1}((1-\del)s)$              &	  $c s^{\frac{q-\theta-1}{q-2}}$           &   $c s^{\frac{q-r-1}{q-2r}}$               \\ \hline  &&\\
$S(t)$  in \eqref{ode} &  $\ds \bigg[ {\ss\frac{c(1-\theta)}{q-2}}(t+c_0)\bigg]^{-\frac{q-2}{1-\theta}}$ & $\ds \bigg[ {\ss\frac{c(r-1)}{q-2r }}(t+c_0)\bigg]^{-\frac{q-2r}{r-1}}$
\\  &&\\\hline &\multicolumn{2}{c|}{}\\
Strong  data &
\multicolumn{2}{c|}{$u_t\in H_*^2(\Omega^{dim =2})\into L^{q<\infty}(\Omega) $}\\
&\multicolumn{2}{c|}{Arbitrarily fast algebraic rate}\\
&\multicolumn{2}{c|}{ (but Sobolev constant blows up as $q\nearrow \infty$)}\\
\hline
\end{tabular}
\end{center}
\caption{Asymptotic energy decay rates in the case feedback map  $g(s)$ is linearly bounded at the origin (for $|s| <1$)  and  is not linearly bounded, and only at infinity  (for $|s|\geq 1$).}
\label{table2}
\end{table}

\subsubsection{Combining different types of damping}

As a consequence of the Theorem \ref{thm:decay}, when different types of nonlinearities at the origin and at infinity are present, and possibly different for the feedback $g$, the overall decay rate can be guaranteed to be the slowest one of the individual rates computed individually for each nonlinearity in the Tables \ref{table1} and \eqref{table2}.

\section{Proof of uniform energy decay}\label{sec:decay}
\medskip
\subsection{Bridging linear and nonlinear observability inequalities}
\medskip
The stability result for the energy of \emph{nonlinear} system follows from a stabilization estimate for a \emph{linear} system as we proved in section 2, namely:
\begin{lemma}[Linear observability estimate]\label{lem:linear}
Assuming that $g(s)=s$, there exists a sufficiently large $T>0$, and a constant $C_L$ dependent on $T,L$ such that the energy of the solution to \eqref{1} satisfies
\[
E(T)\leq C_L \dampinglin
\]
\end{lemma}
The proof of the \emph{linear result} have been addressed in Section 2. The goal of this section is to verify the following extension to the non-linear case.

\begin{lemma}[Nonlinear observability]\label{lem:nonlin}
If the map $g$ is not linearly bounded at infinity (of order other than $1$ according to the Definition \ref{def:order}), then let the Assumption \ref{as:regularity} be satisfied with the corresponding integrability indices $p_{0}$.  Let $T$ and $C_L$ be given by Lemma \eqref{lem:linear}. Then for some constant $C>0$ the solution to \eqref{main problem} satisfies
\[
\begin{split}
E_u(0)\leq C L_T^{2} &\Bigg[ (h+I)\big\{ \damping\big\}\\
 &+ (\sgn_{\infty}[g] )\|u_{t}\|_{L^{\infty}(\bbR_{+}; L^{p_{0}}(\Omega))}^{\frac{|\cO(g)-1|}{p_0-1-\cO(g)}}
 \bigg(\damping\bigg)^{\frac{p_{0}-2\max\{1,\cO(g)\}}{p_{0}-1-\cO(g)} }\Bigg],
 \end{split}
\]
where   $\sgn_\infty(G) \equiv  0$ if $G$ is linearly bounded at infinity, i.e. $\cO(G) = 1$, and $\sgn_{\infty}(G) \equiv 1$ otherwise.
\end{lemma}

In order to prove Lemma \ref{lem:nonlin} we shall exploit the nonlinear observability inequality given in (\ref{main ineq}).  To justify the above aforementioned inequality and the proof of the lemma it is necessary:
\begin{itemize}
	\item  to have an energy identity for \emph{weak} solutions of the original nonlinear system \eqref{main problem},
	
\end{itemize}
When the damping term is linearly bounded the condition follows from the regularity  furnished by the well-posedness to problem (\ref{main problem}) previously established. When the nonlinearity is stronger,  the regularity Assumption  \ref{as:regularity} comes into play;  as a consequence  $g(u_t)$ belongs to $L^{1}(\bbR_+;L^{2}(\Omega))$. With this extra regularity one can extend the  energy  identity \eqref{energy-identity}  to weak solutions by employing finite-difference approximations, exactly as in \cite{Bociu}.  In fact, just for the purposes of the weak energy \emph{inequality} the argument simplifies  if, for instance, the map $s\mapsto g(s)s$ is convex since then one can appeal to weak lower-semicontinuity of the associated functionals without invoking regularity.

To conclude the proof of Lemma \eqref{lem:nonlin}  split
\[
X:= \Omega \times ]0,T[, \quad X = X_0 \cup X_\infty
\]
where (for any a.e. defined version of $u_t$)
\[
X_0 \dfn \big\{ (x,t) \in  X \quad :  \quad  |u_t(x,t)| < 1 \big\}
\]
and $X_\infty \dfn X\; \setminus X_0$. The proof will require the following inequalities:

\begin{enumerate}[I.]
	\item  \textbf{Damping near the origin}. By construction of the concave function $h$ (\eqref{def:h0} we have
\begin{equation}\label{i:origin}
\begin{split}
\int_{X_0} a(x,y)(g(u_t)^2 + u_t^2 ) dX \leq&    \int_{X_{0}} h(g(u_t)u_t) a(x,y)\dX \\
\leq &  C_{a,T} h\left(\int_{X} a(x,y)(x)g(u_t)u_t\dX\right),
\end{split}
\end{equation}
where the last step invoked Jensen's inequality, and $C_{a,T} =  \int_{X_{0}} a \dX$.

\item \textbf{Linearly-bounded damping at infinity}. If $\cO(g) =1$ according to the Definition \eqref{def:order}, then  $g(s)^2 + s^2 \leq c g(s)s$ for some constant $c>0$ provided $|s|>1$.  Directly estimate:
\begin{equation}\label{i:lin}
\begin{split}
\int_{X_\infty}a(x,y) (g(u_t)^2 + u_t^2)   \dX
\leq& c\int_{X} a(x,y)g(u_t)u_t \dX.
\end{split}
\end{equation}

\item \textbf{Superlinear damping at infinity}.
Suppose  $\cO(g) = r> 1$ according to the Definition \eqref{def:order}. Then  $g(s) > c s$ for $|s|>1$, some $c>0$ independent of $s$, and we trivially estimate
\begin{equation}\label{i:superlin:1}
\begin{split}
\int_{X_\infty}a(x,y) u_t^2 \dX
\leq&
c'\intX a(x,y)g(u_t)u_t \dX.
\end{split}
\end{equation}
Next, for any $\lam \in ]0,1[$
\begin{equation}\label{i:superlin:2}
 \int_{X_\infty} a(x,y)g(u_t)^2 \dX  =\overbrace{\int_{X_\infty}  a(x,y)|g(u_t)|^{2\lam} |g(u_t)|^{2(1-\lam)}\dX}^{J_1}.
\end{equation}
Choose any $p>2r$ and estimate the integral labeled $J_1$ using H\"older's inequality with conjugate exponents $\frac{p}{2\lam r}$ and $\frac{p}{p-2\lam r}$ (splitting $a$ as $a^{2\lam r/p} \cdot a^{(p-2\lam r)/p}$):
\begin{equation}\label{i:superlin:3}
J_1 \leq \left(\int_{X_\infty} a(x,y)|g(u_t)|^{p/r} \dX\right)^{2\lam r /p}\left(\int_{X_\infty}
a(x,y)|g(u_t)|^{\frac{2 (1-\lam)p}{p-2\lam r}} \dX\right)^{\frac{p-2\lam r}{p}}.
\end{equation}
Note that $\cO(g)=r$ implies
\begin{equation}\label{g-sim-superlin}
g(s)s \sim s^{r+1} \sim g(s)^{(r+1)/r},\quad  |s|>1.
\end{equation}
Thus,  for  $|g(u_t)|^{\frac{2 (1-\lam)p}{p-2\lam r}}$ to be equivalent to the dissipation integrand $g(s)s$ we solve
\[
 \frac{2(1-\lam)p}{p-2\lam r } = \frac{1+ r}{r}    \implies  \lam = \frac{p(r-1)}{2r(p-r-1)}.
\]
With this choice of $\lam$ combine \eqref{i:superlin:1}, \eqref{i:superlin:2} and \eqref{i:superlin:3} to  conclude
\begin{equation}\label{i:superlin:final}
\begin{split}
&\int_{X_\infty} a(x,y)(g(u_t)^2 + u_t^2) \dX \\
\leq &c\cdot C_{a} \|u_t\|_{ L^\infty(\bbR_+;L^p)}^{ \frac{\cO(g)-1}{p-1-\cO(g)}} \left(\intX a(x,y)  g(u_t)u_t \dX \right)^{\frac{p-2\cO(g)}{p-1-\cO(g)}}.
\end{split}
\end{equation}
for some constant $c$ (dependent only on  \eqref{g-sim-superlin}) and $C_{a} = (\sup a)^{2\lam r/p}$. The resulting inequality holds provided the $L^\infty(\bbR_+; L^p(\Omega))$-norm, of $u_t$ is finite for $p>2 \cO(g)$.
	
\item \textbf{Sublinear damping at infinity}.
Assume  $\cO(g) = r < 1$ according to the Definition \eqref{def:order}. Then  $c|s| >  |g(s)|$ for $|s|>1$, some $c>0$ independent of $s$:
\begin{equation}\label{i:sublin:1}
\begin{split}
\int_{X_\infty} a(x,y)g(u_t)^2 \dX
\leq&c \intX a(x,y)g(u_t)u_t \dX.
\end{split}
\end{equation}
For any  $\lam \in ]0,1[$
\begin{equation}\label{i:sublin:2}
 \int_{X_\infty} a(x,y)u_t^2 \dX = \overbrace{\int_{X_\infty} a(x,y) |u_t|^{2\lam} |u_t|^{2(1-\lam)}\dX}^{J_2}.
\end{equation}
Let $p>2$ and estimate the integral labeled $J_2$ using H\"older's inequality with exponents   $\frac{p}{2\lam}$, and $\frac{p}{p-2\lam}$:
\begin{equation}\label{i:sublin:3}
J_2 \leq \left(\int_{X_\infty} a(x,y)|u_t|^p \dX\right)^{2\lam/p}\left(\int_{X_\infty}a(x,y)
|u_t|^{\frac{2 (1-\lam)p}{p-2\lam}} \dX\right)^{\frac{p-2\lam}{p}}.
\end{equation}
The value of $\lambda\in]0,1[$ is chosen to ensure that
\begin{equation}\label{g-sim-sublin}
|u_t|^{\frac{2 (1-\lam)p}{p-2\lam}}  = u_t^{r+1} \sim g(u_t)u_t,\quad\text{for}\quad |u_t|>1
\end{equation}
namely
\[
 \frac{2(1-\lam)p}{p-2\lam} = 1+ r    \implies  \lam = \frac{p(1-r)}{2(p-1-r)}.
\]
Combine \eqref{i:sublin:1}, \eqref{i:sublin:2}, \eqref{i:sublin:3}
\begin{equation}\label{i:sublin:final}
\begin{split}
 &\int_{X_\infty} a(x,y)(g(u_t)^2 + u_t^2)\dX \\
 \leq &  c \cdot C_a
  \|u_t\|_{L^\infty(\bbR_+; L^p)}^{\frac{1-\cO(g)}{p-1-\cO(g)}}\left(\intX
a(x,y)g(u_t)u_t \dX\right)^{\frac{p-2}{p-1-\cO(g)}}.
   \end{split}
\end{equation}
for some $c>0$ (dependent on the estimate \eqref{g-sim-sublin}),  $C_{a} = (\sup a)^{2\lam/p}$, and asserting that $\|u_t\|_{L^\infty(\bbR_+; L^p)}<\infty$ with  $p>2$.
\end{enumerate}
\bigskip

Having established the above estimates, return to energy inequality \eqref{main ineq}, combine it with the identity of the energy (\ref{energy-identity})  the inequality \eqref{i:origin}, and with either \eqref{i:lin}, or \eqref{i:superlin:final}, or \eqref{i:sublin:final}, depending on whether $\cO(g)=1$, $\cO(g)>1$, or $\cO(g)<1$  respectively.  Using the definition  \eqref{def:damping}, and after relabeling of constants
\[
\begin{split}
E_u(0)\leq L_T \bigg[ & C_{a,T} (h+I)\{ \damping  \}\\
 &+ C_{a} \|u_{t}\|_{L^{\infty}(\bbR_{+}; L^{p_{0}}(\Omega))}^{\frac{|\cO(g)-1|}{p_0-1-\cO(g)}}
 \bigg(\damping\bigg)^{\frac{p_{0}-2\max\{1,\cO(g)\}}{p_{0}-1-\cO(g)} }\bigg].
\end{split}
\]

Thus, the conclusion of Lemma \ref{lem:nonlin} yields. \qed

\subsection{Deriving the energy decay rates}
\medskip
The result of Lemma \ref{lem:nonlin} can be recast into the form
\[
 E_u(0)\leq F_T\left(\damping\right)
 \stackrel{\eqref{energy-identity}}{=}  F_T\left( E_u(0) - E_u(T)\right)
\]
with
\[
F_T \dfn  CC_L^2( I  + h  + \mathbf{C} \tl{h})
\]
\begin{equation}\label{def:tl-h0}
\bfC = \|u_t\|_{L^\infty\big(\bbR_+; L^{p_{0}}(\Omega) \big)}^{\frac{|1-\cO(g)|}{p_{0}-1-\cO(g)}},
\qquad \tl{h}(s)
= s^{\frac{p_{0}- 2 \max\{\cO(g),1\} }{p_{0}-1-\cO(g)}}.
\end{equation}
The function $F_T$ is monotone increasing, zero at the origin. Due to the energy being non-increasing we have, a fortiori,
\[
E(T) \leq F_T(E(0) - E(T))\quad \text{or}\quad
   (I  + F_T^{-1}) E(T) \leq E(0).
\]
Henceforth $E(t)$ will denote the energy of the original nonlinear ``$u$"-problem \eqref{main problem}. Now we may appeal to the result of \cite{Lasiecka-Tataru}
 to conclude that the energy $E(t)$ is decaying to $0$, as $t\to \infty$, at least as fast as a solution to a certain nonlinear ODE. Rather than stating the ODE in the full form which typically does not admit closed-form solutions, let us restate an approximate version (\cite{Lasiecka-Tataru}):
\[
 E(t) \leq S\left(\frac{t}{T} - 1\right),\quad t \geq T_0 > T
\]
for a sufficiently large $T_0$  and a function $S$ that solves the (monotone) non-linear ODE
\[
S_t + H^{-1}( (1-\del)S) = 0,\qquad  S(0) = E_u(0).
\]
Here the parameter $\del>0$ can be made arbitrarily small at the expense of a growing $T_0$, and the function $H$ has the fastest growth near the origin from among $I$, $h$, $h$ and $\tl{h}$. For the case when $H=\tl{h}$  we can solve the ODE explicitly using the Definition \ref{def:tl-h0}. Essentially, the resulting rate will be the slowest from among exponential (if we take $H\sim I$),  and those guaranteed by $H\sim h$, or $H\sim \tl{h}$ . This observation concludes the proof of Theorem \ref{thm:decay}. \end{proof} 

\section{Numerical Results}

\subsection{Description of the numerical scheme.}

In this section, we will replicate numerically the results obtained in the previous sections. In particular, and given the boundary conditions we have to deal with, our proposal consist on the approximation of the solution of Problem \eqref{main problem} using the finite differences method. To achieve this, the $x$ domain $[0,\pi]$ will be subdivided in $J+1$ equally spaced sub-intervals with length $\Delta x$ each, while the $y$ domain $[-l,l]$ will be subdivided in $K+1$ sub-intervals, each of length $\Delta y$.

The domain $\Omega$ will be then discretized using rectangles of area $\Delta x \Delta y$. We will also write $x_j := j\Delta x,\; j = 0,1,\dots,J+1$ and $y_k := -l + k\Delta y,\; k = 0,1,\dots,K+1$.

Integrating from $t=0$ to some $t=T \in \RR^+$ using $N$ timesteps of length $\Delta t:= \frac{T}{N}$, the solution at a timestep $n$ will be approximated by a vector
\begin{equation*}
 U^n \in \RR^{(J+2)(K+2)}: U^n = [U^n_0\; U^n_1 \; \dots \; U^n_{(K+1)}]^T
\end{equation*}
where each $U_k^n$ is such that
\begin{equation}\label{vecU}
  U^n_k \in \RR^{(J+2)}: \; U_k^n = [U^n_{0,k}\; U^n_{1,k} \; \dots \; U^n_{J+1,k}]
\end{equation}
this is, each $U_k^n$ describes, for each node $k$ on the $y$ coordinate, the solution for all of the nodes on the $x$ coordinate.

\subsubsection{Discretization of the bilaplacian.}

Recalling that $\Delta^2 u = u_{xxxx} + 2u_{xxyy} + u_{yyyy}$, we will proceed to discretize directly each term using centered finite differences. Given a function $f(x)$ defined over $[0,\pi]$, we will write $f_i := f(x_i),\: x_i \in (0,\pi), \: i = 0,\:2,\: \dots,\: J+1$. Ignoring the boundary for now, its fourth derivative at the $j$-th node can be approximated as follows
\begin{equation*}
  f_{xxxx}(x_i) \approx \frac{f_{i-2} - 4f_{i-1} + 6f_i - 4f_{i+1} + f_{i+1}}{\Delta x^4}, \qquad i = 0,1,\dots,J+1
\end{equation*}
this can be also represented as a matrix-vector product:
\begin{equation*}
  f_{xxxx} \approx  \frac{1}{\Delta x^4}D^4 f := \frac{1}{\Delta x^4}
  \begin{bmatrix}
    6 & -4 & 1 &    & & \\
    -4 & 6 & 4 & 1  & & \\
    1 & -4 & 6 & -4 & 1 & \\
    & \ddots & \ddots & \ddots & \ddots & \ddots \\
    & & 1 & -4 & 6 & -4 & 1  \\
    & & & 1 & -4 & 6 & -4 \\
    & & & & 1 & -4 & 6
  \end{bmatrix}
  \begin{bmatrix}
    f_1 \\ f_2 \\ f_3 \\ \vdots \\ f_{J-1} \\ f_{J} \\ f_{J+1}
  \end{bmatrix}
\end{equation*}
The second derivative receives also the same treatment: for a centered scheme, we have
\begin{equation}\label{der2}
  f_{xx}(x_i) \approx \frac{f_{i-1} - 2f_i + f_{i+1}}{\Delta x^2}
\end{equation}
and as a matrix-vector product, we have
\begin{equation}\label{d2}
  f_{xx} \approx \frac{1}{\Delta x^2}D^2 f := \frac{1}{\Delta x^2}
  \begin{bmatrix}
    -2 & 1 & & & \\
    1 & -2 & 1 & & \\
    & \ddots & \ddots & \ddots \\
    &  & 1 & -2 & 1 \\
    & & & 1 & -2
  \end{bmatrix}
  \begin{bmatrix}
    f_1 \\ f_2 \\ \vdots \\ f_{J} \\ f_{J+1}
  \end{bmatrix}
\end{equation}
This can be extended to further dimensions, while analog definitions can be given for $f_{yy}$ and $f_{yyyy}$. With this in consideration, and given the structure of the numerical solution $U$, its bilaplacian can be approximated as a pentadiagonal block matrix:
\begin{align}\label{bilapD}
  \Delta^2 U &= D_x^4 U + D_y^4U + 2D_x^2D_y^2 U 
\end{align}
where, for the identity matrix $I\in \RR^{(J+2)(K+2) \times (J+2)(K+2)}$,
\begin{align*}
  D_x^4 U = \frac{1}{\Delta x^4}I \otimes D^4 , \quad &D_y^4 U = \frac{1}{\Delta y^4} D^4 \otimes I \\
    D_x^2 U = \frac{1}{\Delta x^2}I \otimes D^2 , \quad &D_y^2 U = \frac{1}{\Delta y^2} D^2 \otimes I
\end{align*}

\subsection{Treatment of the boundary}

Given the boundary conditions of Problem \eqref{main problem}, we must proceed to modify the discretized bilaplacian. On the $x$ coordinate, we know that $u(0,y,t) = u(\pi,y,t) = 0$. Hence, we get $U_{0,k}^n = U_{J+1,k}^n = 0,\; \forall k \in [0,K+1],\; \forall n \in [0,N]$. This doesn't alter the form of the matrix representing the second derivative if we apply it for $U_{i,k}^n,\; i \in [1,J]$, but this also forces us to do the same for the fourth derivative matrix. From here, we will denote $[U^n]_{j,k}$ as the $j$-th element of the vector $U_k^n$ defined in \eqref{vecU}. Regarding that case, for $i=1$ and $i=J$ we have:
\begin{align*}
  [D_x^4 U^n]_{1,k} &= \frac{U^n_{-1,k} - 4U^n_{0,k} + 6U^n_{1,k} - 4U^n_{2,k} + U^n_{3,k}}{\Delta x^4} \\
  [D_x^4 U^n]_{J,k} &= \frac{U^n_{J-2,k} - 4U^n_{J-1,k} + 6U^n_{J,k} - 4U^n_{J+1,k} + U^n_{J+2,k}}{\Delta x^4}.
\end{align*}
In order to get the values of $U^n_{-1,k}$ and $U^n_{J+2,k}$, we have to take a look at the discretized second derivative on the boundary. Because $u_{xx}(0,y,t) = u_{xx}(\pi,y,t) = 0$, we can write
\small
\begin{equation*}
  [D_x^2U^n]_{0,k} = \frac{U^n_{-1,k} - 2U^n_{0,k} + U^n_{1,k}}{\Delta x^2} = 0, \quad   [D_x^2U^n]_{J+1,k} = \frac{U^n_{J,k} - 2U^n_{J+1,k} + U^n_{J+2,k}}{\Delta x^2} = 0
\end{equation*}
\normalsize
and thus, $U^n_{-1,k} = -U^n_{1,k} $ and $U_{J+2,k}^n = U_{J,k}^n$. Hence, the matrix representation will be given with the aid of a matrix $\hat{D}^4 \in \RR^{J\times J}$ such that
\begin{equation}\label{d4x}
  D_x^4 = \frac{1}{\Delta x^4} I \otimes
  \begin{bmatrix}
    5 & -4 & 1 &    & & \\
    -4 & 6 & 4 & 1  & & \\
    1 & -4 & 6 & -4 & 1 & \\
    & \ddots & \ddots & \ddots & \ddots & \ddots \\
    & & 1 & -4 & 6 & -4 & 1  \\
    & & & 1 & -4 & 6 & -4 \\
    & & & & 1 & -4 & 5
  \end{bmatrix}
  =: \frac{1}{\Delta x^4} I \otimes \hat{D}^4
\end{equation}
On the $y$ coordinate, the second derivative can be modified with ease when considering he boundary condition $u_{yy}(x,\pm l,t) + \sigma u_{xx}(x,\pm l,t) = 0$. With this, we have for $j\in [1,J]$ and for $n \in [0,N]$ that
\begin{equation*}
[D_y^2 U^n]_{j,0} = -\sigma [D_x^2 U^n]_{j,0},\quad [D_y^2 U^{n}]_{j,K+1} = -\sigma [D_x^2 U^{n}]_{j,K+1}
\end{equation*}
and thus, the matrix representation of the second derivative over $y$ will be
\begin{equation}\label{d2y}
  D_y^2= \frac{1}{\Delta y^2}
    \begin{bmatrix}
    -\frac{\sigma}{\Delta x^2} D^2 &  & & & \\
    I & -2I & I & & \\
    & \ddots & \ddots & \ddots \\
    &  & I & -2I & I \\
    & & &  & -\frac{\sigma}{\Delta x^2} D^2
    \end{bmatrix}
\end{equation}
where the matrix $D^2$ was already presented in \eqref{d2}. For the fourth derivative, we will have the same problem as in the $x$ coordinate case; this is,
\begin{align*}
  [D_y^4 U^n]_{j,0} &= \frac{U_{j,-2}^n - 4U_{j,-1}^n + 6U_{j,0}^n - 4U_{j,1}^n + U_{j,2}^n}{\Delta y^4} \\
  [D_y^4 U^n]_{j,K+1} &= \frac{U^{n}_{j,K-1} - 4U^n_{j,K} + 6U^{n}_{j,K+1} - 4U^{n}_{j,K+2} + U^{n}_{j,K+3}}{\Delta y^4}
\end{align*}
To compute $U_{j,k}^{n}$ when $k = -2,-1,K+2,K+3$, we need to combine the fourth derivative discretization at the boundary with the one obtained from the second derivative discretization. This gives the following matrix representation
\tiny
\begin{equation}\label{d4y}
  D_y^4 = \frac{1}{\Delta y^4}
  \begin{bmatrix}
    2I + \frac{\sigma_1}{\Delta x^2} D^2 & -4I + 4\frac{\sigma_2}{\Delta x^2} D^2 & 2I - \frac{\sigma_2}{\Delta x^2}D^2 &    & & \\
    -2I -  \frac{\sigma\Delta y^2}{\Delta x^2} D^2 & 5I & 4 & 1  & & \\
    I & -4I & 6I & -4I & I & \\
    & \ddots & \ddots & \ddots & \ddots & \ddots \\
    & & I & -4I & 6I & -4I & I  \\
    & & & I & -4I & 5I & -2I - \frac{\sigma\Delta y^2}{\Delta x^2} D^2 \\
    & & & & 2I - \frac{\sigma_2}{\Delta x^2} D^2 & -4I + 4\frac{\sigma_2}{\Delta x^2} D^2 & 2I + \sigma_1 D^2
  \end{bmatrix}
\end{equation}
\normalsize
where $\sigma_1 := \Delta y^2 (2\sigma - 3(2 - \sigma))$ and $\sigma_2 := \Delta y^2(2 - \sigma)$. The bilaplacian matrix then will be a block pentadiagonal matrix of size $J(K+2) \times J(K+2)$, where it is defined by the sum \eqref{bilapD} using the modified matrices given by \eqref{d4x}, \eqref{d2y} and \eqref{d4y}.

\subsection{Integration over time.}

Given the definition of the function $\varphi(u)$ on Problem \eqref{main problem}, the first order derivative will be approximated using a centered finite different scheme, and the integral will be computed using a Simpson rule for each value on the $y$ coordinate. Meanwhile, the time derivative will be approximated using a finite difference scheme, analog the one used in \eqref{der2}. Finally, we will consider a Crank-Nicholson discretization for the bilaplacian; this is, we will approximate the bilaplacian over time using $\Delta ^2 \Big(\frac{U^{n+1} + U^n}{2}\Big)$.

This lead us to the numerical scheme which we will use on this work: for $U_{j,k}^n$ the numerical solution of Problem \eqref{main problem} on $(x_j,y_k,t_n)$ with $h(x,y,t) = 0$, the solution at the timestep $n+1$ will be given by
\begin{align}\label{esquema numerico}
  \bigg[(I &+ \frac{\Delta t^2}{2}D_x^4 + D_y^4 + 2D_x^2D_y^2)  U^{n+1}\bigg]_{j,k} \nonumber \\
  &= 2U_{j,k}^{n} - U_{j,k}^{n-1} - \frac{\Delta t^2}{2}[( D_x^4 + D_y^4 + 2D_x^2D_y^2)U^{n}]_{j,k}  \\
  &\; - \Delta t^2 \Big(\varphi(U_{j,k}^{n}) + a(x_j,y_k)g\Big(\frac{U_{j,k}^{n} - U_{j,k}^{n-1}}{\Delta t}\Big) \Big) \nonumber
\end{align}
if $a(x,y) = 0, \forall (x,y) \in \Omega$, and $P = S = 0$, then this scheme can control numerical diffusion of the energy if a sufficiently small value of $\Delta t$ is used. If the feedback $g(s)$ is linear, then a Newmark scheme can be used to compute the numerical solution, which will conservate the energy for any value given for $\Delta t < 1$.

This scheme was implemented on a \texttt{MATLAB} script, where the linear equation system present in \eqref{esquema numerico} was solved using the default solver of the software. When solving the static problem $\Delta ^2 u(x,y) = f(x,y)$, and using values of $\Delta x \approx 0.02$ and $\Delta y \approx 0.015$, the code can approximate the solution of the problem with errors of magnitude $10^{-6}$ for the numerical $L^2$ norm.

\subsection{Some results}

For the following experiments, we will solve Problem \eqref{main problem} using $h = 0$, $\sigma = 0.2$, $S = 10^{-5}$, $P = 10^{-3}$, $l = \frac{\pi}{150}$, and $u_1 = 0$. Function $u_0$ will be given by the solution of the following static problem

\begin{equation}\small \label{pEstatico}
\hspace*{-.2cm}
\begin{cases}
\Delta^2 u(x,y) =50 \sin(2x),\quad  \mbox{ in } \;\Omega \times (0,+\infty), \\\\

u(0,y)=u_{xx}(0,y)=u(\pi,y)=u_{xx}(\pi,y)=0, \quad \quad  \!\!\!\!(y,t)\in(-l,l), \\\\

u_{yy}(x,\pm l)+\sigma u_{xx}(x,\pm l)=0, \quad \qquad \qquad \qquad \qquad \quad x\in (0,\pi) \\\\

u_{yyy}(x,\pm l)+(2-\sigma)u_{xxy}(x,\pm l)=0,
\quad \qquad \qquad \qquad \!\! x\in(0,\pi)
\end{cases}
\end{equation}

The solution is given in \cite{Gazzola1}, Theorem 3.2. It can also be computed using this same numerical scheme. The function $a(x,y)$ is defined as follows:

\small
\begin{equation*}
  a(x,y) =
  \begin{cases}
    1, \: (x,y) \in (0,5\Delta x) \cup (\pi-5\Delta x,\pi) \times (-l, -l+5\Delta y) \cup (l-5\Delta y,l) \\
    0, \text{ otherwise. }
  \end{cases}
\end{equation*}
\normalsize
where, on the numerical scheme, $\Delta x = \frac{\pi}{150} \approx 0.02$, $\Delta y = \frac{l}{50} \approx 0.015$, and $\Delta t = 0.01$.  We will use three differents forms for the feedback function $g(s)$. Figure \ref{fig6} shows the time evolution of the energy given by equation \eqref{energy} when using $g(s) = \sqrt{s}$, while Figure \ref{fig7} shows the case when $g(s) = s$. We can see that the energy decays following the upper bounds claimed in Theorem \ref{thm:decay}. This can also be seen on Figure \ref{fig8}, when the feedback is given by
\begin{align}\label{eq5.1}
  g(s) = 
  \begin{cases}
    s^2, \quad \text{ if } s \geq 0 \\
    s^3, \quad \text{ if } s < 0
  \end{cases}
\end{align}

	\begin{figure}[h!]
		\centering
                \includegraphics[width=\linewidth]{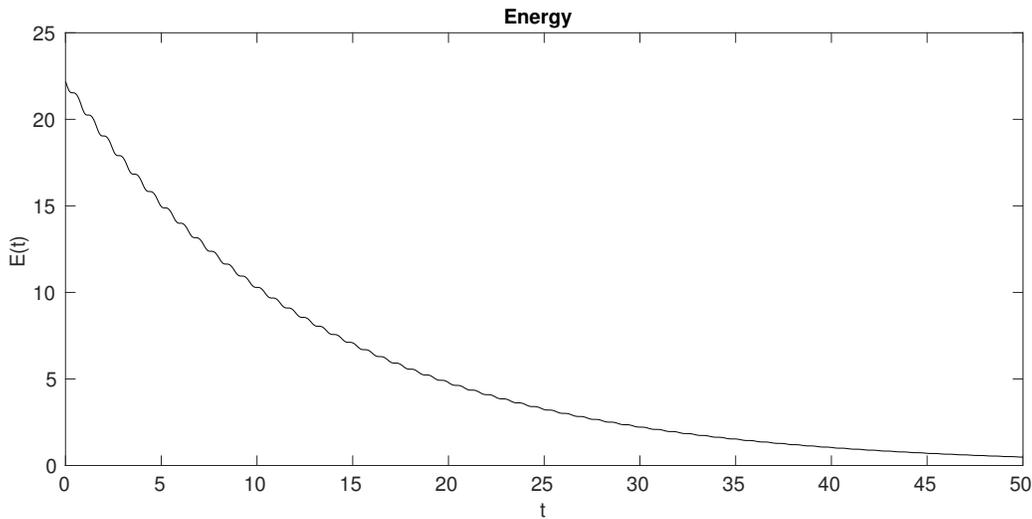}
                                \caption{Energy evolution when $g(s) = \sqrt{s}$.}
                                \label{fig6}
        \end{figure}
        	\begin{figure}[h!]
		\centering
                \includegraphics[width=\linewidth]{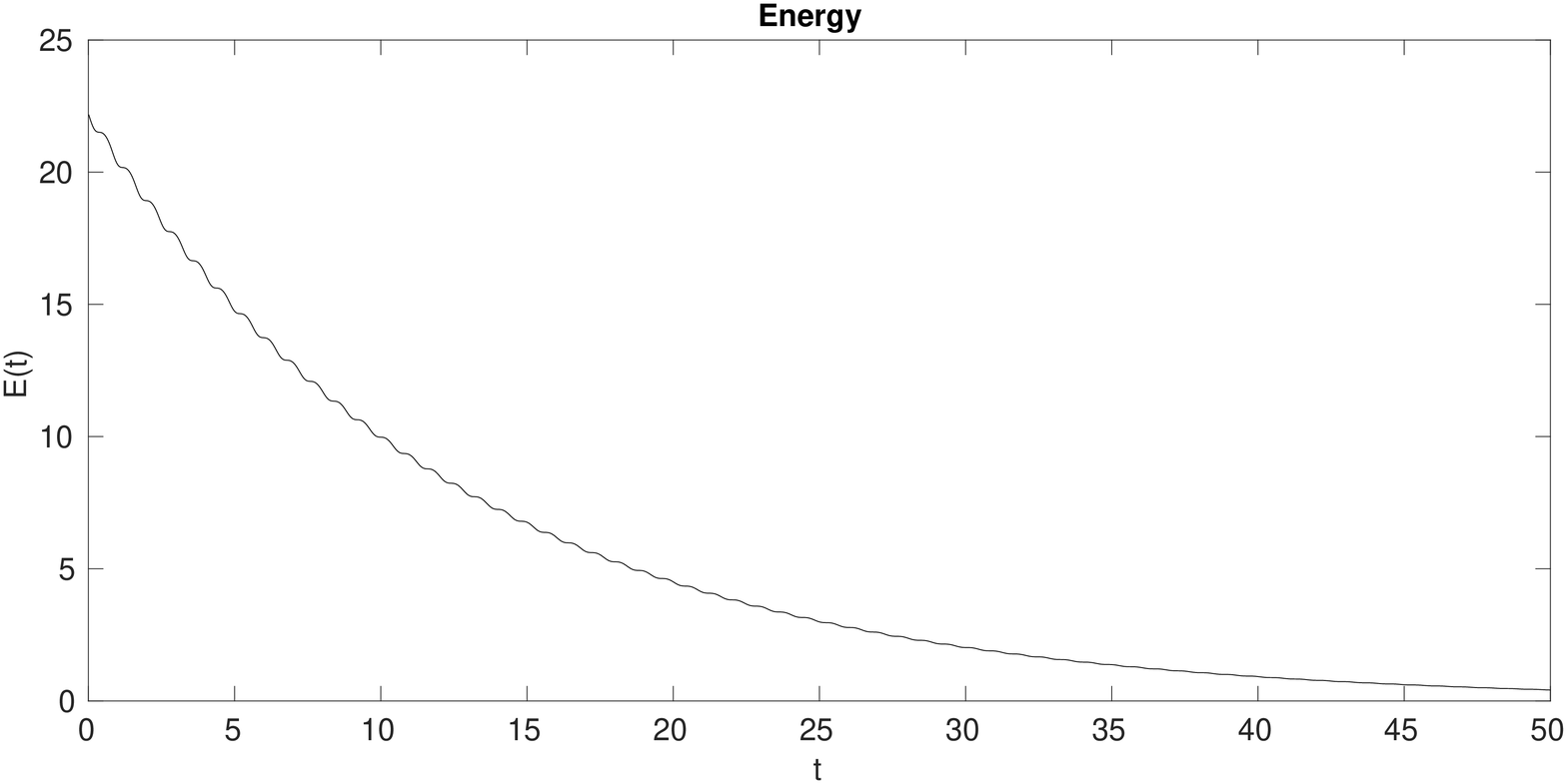}

                \caption{Energy evolution when $g(s) = s$.}
                                \label{fig7}
                \end{figure}
	\begin{figure}[h!]
		\centering
                \includegraphics[width=\linewidth]{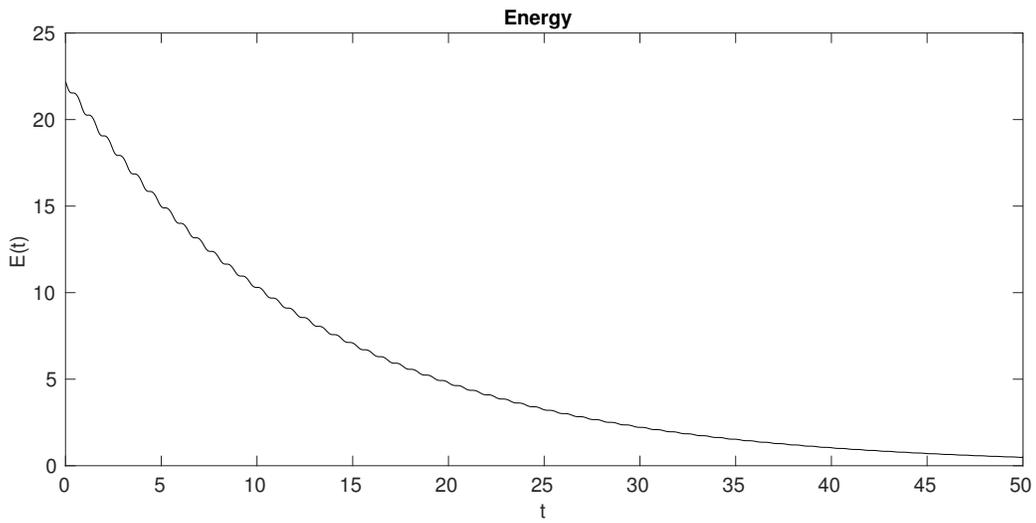}

                \caption{Energy evolution when $g(s)$ is given by Equation \eqref{eq5.1}.}
                                \label{fig8}
        \end{figure}

{
	
	\medskip 
	
	\newpage 
\section{Conclusion}

	\subsection{Analytical Part }
	
The next table presents a comparison between the present article and the existing literature regarding the problem \eqref{main problem} and similar highlighting the contributions of this paper.

\begin{table}[htb!]

	\begin{tabular}{|M{5cm}|M{2cm}|M{5cm}|}
\hline 		\multicolumn{3}{|c|}{Summary of the literature with respect to problem \eqref{main problem} and similar} \\ \hline 
Authors		&  Damping     &  Contributions   \\ \hline 
\cite{Tucsnak}		&  localized  & \hspace*{-.5cm}  \begin{minipage} [t] {0.4\textwidth} 
	\vspace{-\topsep}\begin{itemize}
		\setlength{\parskip}{0pt}
		\setlength{\itemsep}{0pt plus 1pt}
		
\crossed non  smooth domain
	\done similar model			
		\done  well-posedness
		\done stabilization
\crossed nonlinear damping
	\end{itemize} \vspace{-\topsep}
\end{minipage}
\\ \hline 
\cite{Bochicchio}		&  full  & \hspace*{-.5cm}  \begin{minipage} [t] {0.4\textwidth} 
			\vspace{-\topsep}\begin{itemize}
 \setlength{\parskip}{0pt}
 \setlength{\itemsep}{0pt plus 1pt}
	\done similar model
	\done well-posedness
\crossed  stabilization
\crossed nonlinear damping
			\end{itemize} \vspace{-\topsep}
		\end{minipage}
		    \\ \hline 
		    
		    \cite{Gazzola2}		& full   & \hspace*{-.5cm}  \begin{minipage} [t] {0.4\textwidth} 
		    	\vspace{-\topsep}\begin{itemize}
		    		\setlength{\parskip}{0pt}
		    		\setlength{\itemsep}{0pt plus 1pt}
		    	\done problem \eqref{main problem}
		    		\done well-posedness
		    		\done stabilization
		    		\crossed nonlinear damping
		    	\end{itemize} \vspace{-\topsep}
		    \end{minipage}\\\hline
\cite{Messaoudi}		& full   & \hspace*{-.5cm}  \begin{minipage} [t] {0.4\textwidth} 
	\vspace{-\topsep}\begin{itemize}
		\setlength{\parskip}{0pt}
		\setlength{\itemsep}{0pt plus 1pt}
\done similar model
			\done well-posedness
		\done stabilization
				    		\crossed nonlinear damping
		\end{itemize} \vspace{-\topsep}
		\end{minipage}\\ \hline
Present article		&  localized  & \hspace*{-.5cm}  \begin{minipage} [t] {0.4\textwidth} 
			\vspace{-\topsep}\begin{itemize}
				\setlength{\parskip}{0pt}
				\setlength{\itemsep}{0pt plus 1pt}
				
				\done non  smooth domain
				\done similar model			
				\done  well-posedness
				\done stabilization
				\done nonlinear damping
				\done to extend the unique continuation principle proved in  \cite{Kim} for domains with smooth boundary to the present case where the boundary contains corners.
			\end{itemize} \vspace{-\topsep}
		\end{minipage}
		\\ \hline 
	\end{tabular}
\caption{Existing literature regarding the problem \eqref{main problem} and similar.}
\end{table}

\newpage

\medskip

\subsection{Numerical part}We have proved new energy decay rates for some feedback functions, and those results were replicated by numerical experiments using a finite difference scheme. Given the boundary conditions of the problem, this finite difference scheme is a reasonable choice where other available finite element integrators fail. We hope this work might be of use for further studies and applications on bridges and vibrating plates.

}

\section{Appendix}
\subsection{Hessian and Laplacian}

Let $f$ be a $C^k$ function ($k \geq 2$) on a Riemannian manifold $(M,g)$. Then its Hessian with respect to the Riemanian connection $\nabla$ is given by
\[
(\nabla^2 f)(X,Y) = XY(f) - (\nabla_X Y)(f),
\]
where $X,Y$ are vector fields on $M$ and $X(f)$ is the directional derivative of $f$ with respect to the vector field $X$.
$XY(f)$ is the directional derivative of $Y(f)$ with respect to $X$.
In a coordinate system $(x_1,\ldots, x_n)$, we have that
\[
\nabla^2 f\left( \frac{\partial}{ \partial x_i},\frac{\partial}{ \partial x_j} \right) = \frac{\partial^2 f}{\partial x_i \partial x_j} - \sum_{k=1}^n \Gamma_{ij}^k \frac{\partial f}{\partial x_k},
\]
where $\Gamma_{ij}^k$ are the Christoffel symbols of $M$ with respect to $(x_1,\ldots,x_n)$.
The norm $\Vert \nabla^2 f\Vert$ is defined as
\[
\Vert (\nabla^2 f)(p)\Vert^2 =  \sum_{i,j=1}^n \left( (\nabla^2 f)(p)(e_i(p),e_j(p)) \right)^2,
\]
where $(e_1(p),\ldots,e_n(p))$ is an orthonormal basis of the tangent space $T_pM$ of $M$ at $p$.
The Laplacian of $f$ is given by
\[
\Delta f(p) = \sum_{i=1}^n (\nabla^2 f(p))(e_i(p), e_i(p)).
\]
It is straightforward that $\Vert (\nabla^2 f)(p) \Vert^2$ and $\Delta f(p)$ do not depend on the choice of the orthonomal basis $(e_1(p),\ldots,e_n(p))$.

Let $(e_1,\ldots,e_n)$ be an orthonormal moving frame and $(x_1,\ldots,x_n)$ be a coordinate system in a neighborhood $\tilde W$ of $p \in M$ such that
\begin{equation}
\label{compara referencial campos coordenados}
(e_1(p),\ldots,e_n(p))= (\partial /\partial x_1(p), \ldots, \partial /\partial x_n(p)).
\end{equation}
Due to the continuity of $\nabla^2 f$, for every $\tilde\varepsilon >0$ there exist a neighborhood $W$ of $p \in M$ such that
\begin{eqnarray}
\left( (\nabla^2 f)(q)(e_i,e_j) \right)^2
& \leq &
(1+\tilde \varepsilon)\left( (\nabla^2 f)(q)\left( \frac{\partial}{\partial x_i}, \frac{\partial}{\partial x_j} \right) \right)^2 \nonumber \\
& \leq & (1+ \tilde \varepsilon)\left( \frac{\partial^2 f}{\partial x_i \partial x_j} - \sum_{k=1}^n \Gamma_{ij}^k \frac{\partial f}{\partial x_k} \right)^2. \nonumber
\end{eqnarray}
For the sake of simplicity, we will suppose that
\begin{equation}
\label{limitante norma Hessiano}
\left( (\nabla^2 f)(q)(e_i,e_j) \right)^2  \leq  2 \left( \frac{\partial^2 f}{\partial x_i \partial x_j} - \sum_{k=1}^n \Gamma_{ij}^k \frac{\partial f}{\partial x_k} \right)^2
\end{equation}
for every $i,j=1,\ldots, n$ whenever this kind of neighborhood is needed.

Denote the distance function on $M$ by $\mathrm{dist}$.
Let $N$ be a compact and oriented submanifold of $M$ of codimension one.
The orientation of $N$ is given by a normal unit vector field $\xi$ on $N$.
A tubular neighborhood of $N$ is a subset
\[
\tilde N:=\{\tilde q\in M;\mathrm{dist}(q,N)< \varepsilon\},
\]
where $\varepsilon >0$, every $\tilde q \in \tilde N$ admits a unique $q \in N$ such that $\mathrm{dist}(q,N)=\text{dist}(q,\tilde q)$ and $\tilde q \mapsto q$ is a submersion from $\tilde N$ to $N$.
The tubular neighborhood can be constructed considering $\exp_q t\xi(q)$ for $q \in N$ and $t \in (-\varepsilon, \varepsilon)$, where $\exp_q : T_qM \rightarrow M$ is the exponential map.
We have that $\text{dist}(\exp_q t \xi(q),N)=\text{dist}(\exp_q t \xi(q),q)=\vert t\vert$ and $t$ is the oriented distance from $\exp_q t\xi(q)$ to $N$.
If $W$ is a coordinate neighborhood of $p \in N$ with coordinate system $(x_1,\ldots, x_{n-1})$, then
\[
\tilde W:=\{\exp_q t\xi(q);q \in W,t\in (-\varepsilon,\varepsilon)\}
\]
is a neighborhood of $q \in M$ with coordinate system
\begin{equation}
\label{coordenada tilde U}
(x_1,\ldots, x_{n-1},t).
\end{equation}
In what follows, we need a neighborhood
\begin{equation}
\label{tilde U}
\tilde W:=\{\exp_q t\xi(q);q \in W,t\in (-\varepsilon^\prime, \varepsilon^\prime )\}
\end{equation}
such that (\ref{limitante norma Hessiano}) is satisfied.

In this setting, we have the following result:

\begin{lemma}
	\label{estimativa local}
	Let $M$ be a Riemannian manifold and $N$ be an oriented compact submanifold of $M$ with codimension one.
	Let $p \in M$ and consider a neighborhood $\tilde W$ of $p \in M$ with coordinate system $(x_1,\ldots,x_{n-1},x_n=t)$ as in (\ref{coordenada tilde U}) and (\ref{tilde U}) and satisfying (\ref{compara referencial campos coordenados}) and (\ref{limitante norma Hessiano}) with respect to an orthonormal frame $(e_1,\ldots,e_n)$.
	Set the smooth function $\eta:\tilde W \rightarrow \mathbb R$ defined in this coordinate system as $\eta (x_1,\ldots, x_{n-1},x_n) = x_n^4$.
	Then
	\[
	\frac{\Vert \nabla^2 \eta \Vert^2}{\vert \eta \vert}\leq 30
	\]
	on $\tilde W \backslash N$, after an eventual further shrinking of $\tilde W$.
\end{lemma}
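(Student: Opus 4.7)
The plan is to work throughout in the Fermi coordinate chart $(x_1,\ldots,x_{n-1},x_n=t)$ on $\tilde W$ that was constructed along $N$ via $(q,t)\mapsto \exp_q(t\xi(q))$. The first key ingredient is a standard identity satisfied by such coordinates: for every fixed $q\in W\subset N$, the curve $t\mapsto \exp_q(t\xi(q))$ is by construction a geodesic of $M$ whose tangent in these coordinates is precisely $\partial/\partial x_n$. Writing the geodesic equation along this curve, the coefficients $\ddot\gamma^k$ vanish and only $\dot\gamma^n=1$ contributes, forcing
\[
\Gamma_{nn}^{k}(x_1,\ldots,x_{n-1},t) = 0 \quad \text{for every } k=1,\ldots,n \text{ and every } (x_1,\ldots,x_{n-1},t)\in \tilde W.
\]

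Next I would compute the partial derivatives of $\eta=x_n^4$ and substitute into the coordinate formula for the Hessian. All partials of $\eta$ vanish except $\partial\eta/\partial x_n = 4x_n^3$ and $\partial^2\eta/\partial x_n^2 = 12 x_n^2$, so
\[
(\nabla^2\eta)\!\left(\tfrac{\partial}{\partial x_n},\tfrac{\partial}{\partial x_n}\right) = 12\, x_n^2 - 4 x_n^3\,\Gamma_{nn}^n = 12\, x_n^2,
\]
while for $(i,j)\neq(n,n)$ the second partial vanishes, yielding
\[
(\nabla^2\eta)\!\left(\tfrac{\partial}{\partial x_i},\tfrac{\partial}{\partial x_j}\right) = -4\, x_n^3\,\Gamma_{ij}^{n}.
\]

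Having these expressions, I would invoke the inequality (\ref{limitante norma Hessiano}) to pass from the coordinate-basis Hessian components to the orthonormal-frame ones, then sum to estimate $\|\nabla^2\eta\|^2$. The $(n,n)$-contribution produces a term bounded by a universal constant times $x_n^4$, and the remaining components produce a term bounded by $C\, x_n^6 \sum_{(i,j)\ne(n,n)}(\Gamma_{ij}^n)^2$. Dividing by $|\eta|=x_n^4$ on $\tilde W\setminus N$ gives a bound of the shape
\[
\frac{\|\nabla^2\eta\|^2}{|\eta|} \;\leq\; K_1 + K_2\, x_n^2 \cdot \sup_{\tilde W,\,i,j}|\Gamma_{ij}^{n}|^2,
\]
with $K_1$ a universal constant. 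A final shrinking of $\tilde W$ around $p$ drives the second term to be as small as desired, thanks to the continuity of the Christoffel symbols, and one concludes with the claimed uniform bound.

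The main obstacle, and also the decisive technical point, is the vanishing $\Gamma_{nn}^{k}\equiv 0$ in Fermi coordinates: without it, the $(n,n)$ entry of the coordinate Hessian would contain an extra $-4x_n^3\Gamma_{nn}^n$ term, which after division by $|\eta|=x_n^4$ would blow up like $1/x_n$ as $t\to 0$ and defeat the whole estimate. The remaining steps are essentially bookkeeping: a direct coordinate computation of $\eta=t^4$, the application of (\ref{limitante norma Hessiano}), and a standard continuity-plus-shrinking argument on the Christoffel-symbol contribution.
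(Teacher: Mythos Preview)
Your argument is correct and essentially mirrors the paper's: compute the coordinate Hessian of $\eta=x_n^4$, invoke inequality (\ref{limitante norma Hessiano}) to pass to the orthonormal frame, bound the Christoffel symbols by continuity, and shrink $\tilde W$. The paper carries out precisely these steps, without your additional Fermi-coordinate observation $\Gamma_{nn}^k\equiv 0$.

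That observation is correct, but your claim that it is ``the decisive technical point'' and that without it the quotient would ``blow up like $1/x_n$'' is mistaken. Even retaining a generic $\Gamma_{nn}^n$, the $(n,n)$ coordinate-Hessian entry is $12x_n^2 - 4\Gamma_{nn}^n x_n^3$; after \emph{squaring} and dividing by $|\eta|=x_n^4$ one obtains $(12 - 4\Gamma_{nn}^n x_n)^2$, which is bounded near $x_n=0$, not singular. This is exactly how the paper proceeds: it keeps the full expression $12\,\delta_{in}\delta_{jn}\,x_n^2 - 4\Gamma_{ij}^n x_n^3$ for all $(i,j)$, factors out $x_n^2$, squares, sums, and divides. (As a side remark, neither your computation nor the paper's actually yields the specific constant $30$ in the statement---the leading term alone contributes $2\cdot 12^2=288$ after (\ref{limitante norma Hessiano})---but only boundedness of the quotient is used in what follows.)
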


\noindent{\bf Proof:}
	
	Due to the properties of $(\tilde W,(x_1,\ldots,x_n))$, we have that
	\begin{eqnarray}
	\nabla^2 \eta (e_i,e_j) &  = & 2 \left( \frac{\partial^2 \eta}{\partial x_i \partial x_j}-\sum_{ij}^k \Gamma_{ij}^k \frac{\partial \eta}{\partial x_k} \right)
	=
	24 \delta_{in} \delta_{jn} x_n^2 - 8 \Gamma_{ij}^n x_n^3 \nonumber \\
	& = & x_n^2 \left(24 \delta_{in} \delta_{jn}- 8 \Gamma_{ij}^n x_n \right),
	\end{eqnarray}
	where $\Gamma^k_{ij}$ are the Christoffel symbols of $(\tilde W,(x_1,\ldots,x_n))$.
	Then
	\[
	\Vert (\nabla^2 \eta)(p)\Vert^2 =  \sum_{i,j=1}^n \left( (\nabla^2 \eta)(p)(e_i,e_j) \right)^2 \leq 30 x_n^4,
	\]
	for an eventually smaller $\tilde W$ (we consider $\tilde W$ such that $\Gamma^k_{ij}$ is bounded and $\varepsilon$ is sufficiently small).
	Thus
	\[
	\frac{\Vert \nabla^2 \eta \Vert^2}{\vert \eta \vert}\leq 30
	\]
	on $\tilde W$.\,$ \qedsymbol $

\begin{thm}
	\label{eta em N til}
	Let $M$ be a Riemannian manifold and and $N$ be an oriented compact submanifold of $M$ with codimension one.
	Then there exist a tubular neighborhood $\tilde N$ of $N$ and a smooth function $\eta:\tilde N \rightarrow \mathbb R$ such that
	\[
	\frac{\Vert \nabla^2 \eta \Vert^2}{\vert \eta \vert}\leq 30
	\]
	on $\tilde N \backslash N$.
\end{thm}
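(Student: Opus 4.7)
The plan is to globalize the local construction from Lemma \ref{estimativa local} by means of the tubular neighborhood theorem and the compactness of $N$. First, I would invoke the tubular neighborhood theorem to obtain an initial tubular neighborhood $\tilde N_0 = \{\exp_q s\xi(q): q \in N,\; |s|<\varepsilon_0\}$ of $N$, together with the smooth signed distance function $t:\tilde N_0 \to (-\varepsilon_0,\varepsilon_0)$ defined by $t(\exp_q s\xi(q)) = s$. I then propose the globally defined candidate
\[
\eta:\tilde N_0 \to \mathbb R,\qquad \eta(\tilde q):= t(\tilde q)^4.
\]
This $\eta$ is smooth on $\tilde N_0$, non-negative, and vanishes exactly on $N$; in particular $|\eta|=\eta$ on $\tilde N_0\setminus N$.

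Second, I would appeal to Lemma \ref{estimativa local} at each $p\in N$ to produce a neighborhood $\tilde W_p\subset \tilde N_0$ of the form (\ref{tilde U}), with adapted coordinates $(x_1,\ldots,x_{n-1},x_n)$ in which the last coordinate $x_n$ coincides with the signed distance $t$, and on which the pointwise bound $\Vert \nabla^2\eta\Vert^2/|\eta|\leq 30$ holds after the shrinking required by (\ref{limitante norma Hessiano}). The essential compatibility is that within any such chart the local function $x_n^4$ of the lemma is \emph{exactly} the restriction of the global $\eta=t^4$, because $t$ is an intrinsic smooth function of $\tilde N_0$ independent of the choice of chart. Hence the local estimates supplied by the lemma are in fact estimates for the single function $\eta$.

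Third, since $N$ is compact, the open cover $\{\tilde W_p\cap N\}_{p\in N}$ of $N$ admits a finite subcover $\tilde W_{p_1},\ldots,\tilde W_{p_m}$. A standard Lebesgue-number/compactness argument applied to the projection $\tilde q\mapsto q$ of $\tilde N_0$ onto its base $N$ produces $\varepsilon_1\in (0,\varepsilon_0]$ such that the shrunken tubular neighborhood
\[
\tilde N := \{\exp_q s\xi(q):q\in N,\; |s|<\varepsilon_1\}
\]
is contained in $\tilde W_{p_1}\cup\cdots\cup \tilde W_{p_m}$. Every point of $\tilde N\setminus N$ then lies in some $\tilde W_{p_i}$ where the pointwise bound from Lemma \ref{estimativa local} applies to $\eta$, and the desired inequality $\Vert \nabla^2\eta\Vert^2/|\eta|\leq 30$ holds throughout $\tilde N\setminus N$.

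The main obstacle is arranging that the local estimate in Lemma \ref{estimativa local} is truly uniform along the compact submanifold $N$. This reduces to two checks: (i) the function $\eta=t^4$ is globally and intrinsically defined, so that the constant $30$ does not change from chart to chart; and (ii) the pointwise shrinkings of $\tilde W_p$ required to satisfy (\ref{limitante norma Hessiano}) can be absorbed into a single uniform choice of $\varepsilon_1$ via compactness. Both steps are routine once the geometric setup of the tubular neighborhood and the intrinsic character of the signed distance function are recognized.
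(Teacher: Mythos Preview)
Your proposal is correct and follows essentially the same approach as the paper: define $\eta$ globally as the fourth power of the signed distance to $N$, apply Lemma \ref{estimativa local} in local charts, and use compactness of $N$ to extract a finite cover and a uniform tubular width $\varepsilon$. The paper's argument is terser (it simply takes $\varepsilon$ to be the minimum of the finitely many local widths and observes that the locally defined $\eta$'s agree because they all equal the fourth power of the oriented distance), but your more explicit Lebesgue-number formulation and your emphasis on the intrinsic nature of $t$ accomplish exactly the same thing.
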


\noindent{\bf Proof:}
	
	Cover $N$ by a finite family of open subsets $\tilde W$ as in Lemma \ref{estimativa local}.
	Let $\varepsilon >0$ be the minimum of all $\varepsilon$ correspondent to each $\tilde W$ and let $\tilde N$ be the $\varepsilon$-tubular neighborhood of $N$.
	Then $\eta: \tilde N \rightarrow \mathbb R$, defined locally as in Lemma \ref{estimativa local}, is well defined because $\eta$ is the oriented distance from $x$ to $N$.
	
	\smallskip
	
	Therefore $\eta$ satisfies
	\[
	\frac{\Vert \nabla^2 \eta \Vert^2}{\vert \eta \vert}\leq 30
	\]
	on $\tilde N\backslash N$.
\,$ \qedsymbol $

\begin{lemma}
	\label{existencia suave intermediario}
	Let $M$ be a differentiable manifold and let $\hat U, \hat V \subset M$ be closed disjoint subsets.
	Then there exist open subsets $U$ and $V$ with smooth boundaries containing $\hat U$ and $\hat V$ respectively, with $\bar U \cap \bar V = \emptyset$.
\end{lemma}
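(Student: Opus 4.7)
The plan is to separate $\hat U$ from $\hat V$ by a smooth function and then carve out the required open sets as sublevel sets at regular values, which will automatically give them smooth boundaries.

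First, I would assume the standard convention that $M$ is a Hausdorff, second countable (hence paracompact) differentiable manifold so that smooth partitions of unity are available. Since $\hat U$ and $\hat V$ are disjoint closed subsets, the smooth version of Urysohn's lemma on a manifold (constructed via a subordinate partition of unity to the open cover $\{M\setminus\hat V,\, M\setminus\hat U\}$) yields a smooth function $f:M\to[0,1]$ such that $f\equiv 0$ on a neighborhood of $\hat U$ and $f\equiv 1$ on a neighborhood of $\hat V$.

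Next I would invoke Sard's theorem: the set of critical values of $f$ has Lebesgue measure zero in $\mathbb{R}$, so almost every value in the open interval $(0,1)$ is a regular value of $f$. Pick two regular values $c_1,c_2$ with $0<c_1<c_2<1$. By the regular value theorem, the level sets $f^{-1}(c_1)$ and $f^{-1}(c_2)$ are smooth embedded hypersurfaces of $M$. Define
\[
U \,:=\, f^{-1}\bigl([0,c_1)\bigr), \qquad V \,:=\, f^{-1}\bigl((c_2,1]\bigr).
\]
Both sets are open because $f$ is continuous; they contain $\hat U$ and $\hat V$ respectively because $f\equiv 0$ on $\hat U$ (and $0<c_1$) and $f\equiv 1$ on $\hat V$ (and $c_2<1$).

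It remains to verify the boundary regularity and the separation. Since $c_1$ is a regular value, $\partial U \subseteq f^{-1}(c_1)$ is (contained in) a smooth hypersurface, so $U$ has smooth boundary; the same argument applies to $V$. Finally, $\overline{U} \subseteq f^{-1}([0,c_1])$ and $\overline{V} \subseteq f^{-1}([c_2,1])$, and these two closed sets are disjoint because $c_1<c_2$, so $\overline U \cap \overline V = \emptyset$ as required. The only potential obstacle is the availability of smooth partitions of unity and of Sard's theorem, which is why I would state the standard second-countable/paracompact hypothesis on $M$ at the outset; once that is granted, the argument is essentially a soft separation result and requires no further geometric input.
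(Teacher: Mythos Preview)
Your argument is correct and follows essentially the same route as the paper's proof: apply the smooth Urysohn lemma to separate $\hat U$ and $\hat V$ by a smooth function, then choose two regular values in $(0,1)$ and take the corresponding sublevel/superlevel sets as $U$ and $V$. The paper is slightly terser (it does not spell out Sard's theorem or the paracompactness hypothesis), and its labeling of which set maps to $0$ and which to $1$ is swapped relative to yours, but the method is identical.
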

\noindent{\bf Proof:}
	
	Due to the smooth Urysohn lemma, there exist a smooth function $\varphi:M \rightarrow \mathbb R$ such that $\varphi\vert_{\hat U}\equiv 1$ and $\varphi\vert_{\hat V} \equiv 0$ (see \cite{Colon}).
	Let $a,b\in (0,1)$, with $a<b$, be regular values of $\varphi$.
	Then $U:=\varphi^{-1}([0,a))$ and $V:=\varphi^{-1}((b,1])$ satisfy the conditions stated in the lemma.
$\qedsymbol $

\begin{thm}
	\label{vizinhanca tubular em fechados}
	Let $M$ be a compact and connected Riemannian manifold, eventually with boundary, and let $U$ and $V$ be open subsets of $M$ with smooth boundaries such that $\bar U \cap \bar V = \emptyset$.
	Suppose that $\partial M \subset U$.
	Then there exist a smooth function $\eta : M \rightarrow [0,1]$ such that $\eta\vert_{\bar U} \equiv 1$, $\eta\vert_{\bar V} \equiv 0$, $\eta(x)\in (0,1)$ if $x \in M\backslash (\bar U \cup \bar V)$ and
	\[
	\frac{\Vert \nabla^2 \eta\Vert^2}{\vert \eta \vert}
	\]
	is bounded in $M \backslash \bar V$.
\end{thm}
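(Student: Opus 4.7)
The strategy is to build $\eta$ by superposing two tubular-neighborhood constructions: one near $\partial V$ realizing the model profile $\eta\sim\mathrm{dist}(\cdot,\partial V)^4$ (so that Lemma \ref{estimativa local} immediately yields the Hessian bound), and a mirror one near $\partial U$ that smoothly lifts $\eta$ up to the value $1$ on $\bar U$. Since $\partial M\subset U$ and $\bar U\cap\bar V=\emptyset$, the hypersurfaces $\partial V$ and $\partial U$ are smooth, compact, oriented, and contained in the interior of $M$; thus I may choose tubular neighborhoods $\tilde N_V,\tilde N_U$ that are pairwise disjoint and satisfy $\tilde N_V\cap\bar U=\emptyset$ and $\tilde N_U\cap\bar V=\emptyset$. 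Let $\sigma_V,\sigma_U$ denote the signed distance functions to $\partial V,\partial U$, smooth on their respective tubes, with $\sigma_V>0$ outside $V$ and $\sigma_U>0$ inside $U$.

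Next I fix a $C^3$ profile $\chi:\mathbb{R}\to[0,1/2]$ with $\chi\equiv 0$ on $(-\infty,0]$, $\chi(s)=s^4$ on $[0,s_0]$ with $s_0^4<1/2$, $\chi$ smoothly increasing on $[s_0,s_1]$, and $\chi\equiv 1/2$ on $[s_1,\infty)$; a mirror profile $\tilde\chi:\mathbb{R}\to[1/2,1]$ satisfying $\tilde\chi\equiv 1$ on $[0,\infty)$, $\tilde\chi(s)=1-s^4$ on $[-s_0,0]$, and $\tilde\chi\equiv 1/2$ on $(-\infty,-s_1]$, is constructed analogously. Define
\[
\eta_V(x)=\begin{cases}\chi(\sigma_V(x)),&x\in\tilde N_V,\\0,&x\in\bar V\setminus\tilde N_V,\\ 1/2,&\text{otherwise},\end{cases}\qquad \eta_U(x)=\begin{cases}\tilde\chi(\sigma_U(x)),&x\in\tilde N_U,\\ 1,&x\in\bar U\setminus\tilde N_U,\\ 1/2,&\text{otherwise},\end{cases}
\]
each globally $C^3$ (matching at the outer tubular boundaries is $C^\infty$ by flatness of $\chi,\tilde\chi$ at $\pm s_1$, while the matching at $\sigma_V=0$ and $\sigma_U=0$ is $C^3$ since $s^4$ vanishes to third order at the origin). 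Set $\eta:=\eta_V+\eta_U-1/2:M\to[0,1]$. The pairwise disjointness of $\tilde N_V,\tilde N_U,\bar U,\bar V$ makes the case analysis immediate: $\eta\equiv 0$ on $\bar V$, $\eta\equiv 1$ on $\bar U$, and $\eta\in(0,1)$ on $M\setminus(\bar U\cup\bar V)$.

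For the Hessian bound on $M\setminus\bar V$ the key observation is that on $\tilde N_V$ the factor $\eta_U$ is the constant $1/2$ (since $\tilde N_V$ is disjoint from $\tilde N_U$ and $\bar U$), so $\eta=\eta_V$ on $\tilde N_V$; in the innermost band $\{0<\sigma_V<s_0\}$ this specializes to $\eta=\sigma_V^4$, and Lemma \ref{estimativa local} directly gives $\|\nabla^2\eta\|^2/\eta\leq 30$. On the remaining compact zones ($\tilde N_V\cap\{\sigma_V\geq s_0\}$, $\tilde N_U$, and the region where $\eta\equiv 1/2$), the function $\eta$ is bounded below by the positive constant $\min\{s_0^4,1/2\}$ while $\|\nabla^2\eta\|$ is continuous and hence bounded by compactness, so the ratio is controlled on all of $M\setminus\bar V$. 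The main technical obstacle is structural rather than computational: there is no $C^\infty$ function on $M$ that is simultaneously identically zero on $\bar V$ and equal to $\sigma_V^4$ just outside $\partial V$, because a $C^\infty$ function identically zero on one side of a smooth hypersurface must be flat (of infinite order) at the hypersurface; the construction is therefore intrinsically $C^3$ at $\partial V$. This regularity is exactly what the main text requires (only $\nabla^2\eta$ enters the estimates, and only via its continuity and boundedness), and the fourth-power profile is sharp, since any profile $\sigma_V^k$ with $k<4$ would make $\|\nabla^2\eta\|^2/\eta\sim\sigma_V^{k-4}$ blow up at $\partial V$.
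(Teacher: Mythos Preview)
Your argument is correct and follows essentially the same route as the paper: both constructions use the profile $\mathrm{dist}(\cdot,\partial V)^4$ near $\partial V$ (so that Lemma~\ref{estimativa local}/Theorem~\ref{eta em N til} gives the Hessian bound) and $1-\mathrm{dist}(\cdot,\partial U)^4$ near $\partial U$, and then appeal to compactness together with the positive lower bound on $\eta$ away from $\partial V$; the only cosmetic difference is that the paper glues the pieces $\eta_0\equiv 1/2,\ \eta_1,\ \eta_2$ via a smooth partition of unity subordinate to an open cover $\{A_0,A_1,A_2\}$, whereas you flatten each profile to the value $1/2$ outside its tube and take the sum $\eta_V+\eta_U-1/2$. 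Your regularity remark is well taken and applies equally to the paper's own construction (its claim of $C^4$ is off by one for the same reason you identify), so ``smooth'' in the statement should indeed be read as $C^3$, which is all that is used in the application.
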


\noindent{\bf Proof:}
	Denote $\tilde \partial U = \partial U \backslash \partial M$.
	Then $\tilde \partial U$ and $\partial V$ are disjoint compact submanifolds of the boundaryless Riemannian manifold $\text{int}M$.
	Observe that they are orientable and we choose the normal vector field pointing outside $U$ and $V$ respectively.
	Let $\tilde U$ and $\tilde V$ be $\varepsilon$-tubular neighborhoods of $\tilde \partial U$ and $\partial V$ respectively (as submanifolds of $\text{int}M$) and set $A_1=U \cup \tilde U$ and $A_2 = V \cup \tilde V$.
	We can choose $\varepsilon > 0 $ such that $\bar A_1 \cap \bar A_2 = \emptyset$ and such that $\tilde U$ and $\tilde V$ satisfy the conditions of Theorem \ref{eta em N til}.
	Let $A_0=\{x \in M;\text{dist}(x,U)>\varepsilon/2,\text{dist}(x,V) > \varepsilon/2\}$.
	Then $\{A_0,A_1,A_2\}$ is an open cover of $M$ and we consider a smooth partition of unity $\{\phi_0,\phi_1,\phi_2\}$ subordinated to $\{A_0,A_1,A_2\}$.
	Let $\tilde \eta_1:\tilde U \rightarrow \mathbb R$ and $\tilde \eta_2: \tilde V \rightarrow \mathbb R$ be the oriented distance to $\tilde \partial U$ and $\partial V$ respectively.
	Define $\eta_0:A_0 \rightarrow \mathbb R$ as the constant function $\eta_0 \equiv 1/2$, $\eta_1:A_1 \rightarrow \mathbb R$ by
	\[
	\eta_1(x) =
	\left\{
	\begin{array}{ccc}
	1 & \text{if} & x \in \bar U \\
	1- \tilde\eta_1^4(x) & \text{if} & x \in \tilde U\backslash \bar U
	\end{array}
	\right.
	\]
	and $\eta_2:A_2 \rightarrow \mathbb R$ by
	\[
	\eta_2(x) =
	\left\{
	\begin{array}{ccc}
	0 & \text{if} & x \in \bar V \\
	\tilde\eta_2^4(x) & \text{if} & x \in \tilde V \backslash \bar V.
	\end{array}
	\right.
	\]
	These functions are of class $C^4$.
	Define $\eta=\phi_0 \eta_0 + \phi_1 \eta_1 + \phi_2 \eta_2$.
	$\eta$ is also of class $C^4$ and it is equal to $\eta_2^4$ in an $\varepsilon/2$ neighborhood $A_3$ of $\tilde \partial V$ because $\phi_0$ and $\phi_1$ are zero there.
	Therefore
	\[
	\frac{\Vert \nabla^2 \eta \Vert^2}{\vert \eta \vert}\leq 30
	\]
	on $A_3\backslash \bar V$ due to Theorem \ref{eta em N til} and of course
	\[
	\frac{\Vert \nabla^2 \eta \Vert^2}{\vert \eta \vert}
	\]
	is bounded in the compact subset $M\backslash (A_3\cup V)$ because $\vert \eta \vert$ never vanishes there.
	Therefore
	\[
	\frac{\Vert \nabla^2 \eta \Vert^2}{\vert \eta \vert}
	\]
	is bounded on $M\backslash \bar V$.
$\qedsymbol $


\begin{remark}
	\label{laplacian and other derivatives}
	If $(e_1,\ldots,e_n)$ is an orthonormal moving frame on $M$, then Theorem \ref{vizinhanca tubular em fechados} implies that the quotients
	\[
	\frac{(\nabla^2 \eta(e_1,e_j))^2}{\vert \eta\vert}
	\]
	are bounded in $M\backslash \bar V$ for $i,j=1,\ldots,n$.
	This fact implies that
	\[
	\frac{\vert \Delta \eta \vert^2}{\vert \eta \vert}
	\]
	is bounded in $M\backslash \bar V$ as well.
	In particular if $M$ is a subset of $\mathbb R^n$ (with the canonical metric) and $(x_1,\ldots,x_n)$ is the canonical coordinate system of $\mathbb R^n$, then all partial derivatives
	\[
	\frac{\vert \eta_{x_ix_j}\vert^2}{\vert \eta \vert}
	\]
	are bounded in $M \backslash \bar V$.
\end{remark}

\begin{remark}
	\label{observacao derivada primeira}
	\[
	\frac{\Vert \nabla \eta \Vert^2}{\vert \eta\vert}
	\]
	is bounded in $M\backslash \bar V$.
	In fact, just notice that $\Vert \nabla \eta (p) \Vert^2= \sum_{i=1}^n ((\nabla \eta (e_i))(p))^2$ for an orthonormal basis $(e_1(p),\ldots,e_n(p))$ of $T_pM$ and make the same calculations that we made with the Hessian.
	In particular, if $M\subset \mathbb R^n$ and $(x_1,\ldots,x_n)$ are the canonical coordinates, then we have that
	\[
	\frac{\vert \eta_{x_i}\vert^2}{\vert \eta\vert}
	\]
	are bounded on $M\backslash \bar V$ for every $i=1,\ldots,n$.
\end{remark}

\subsection{Smoothing vertices}

Let $\partial \Omega$ be the boundary of the rectangle.
Let $A$ one of its vertices. Then the neighborhood of $A \in \partial \Omega$ is can be identified with the graph of $\varphi(t)= \vert t\vert$.
Let $\eta:\mathbb R \rightarrow \mathbb R$ be the standard mollifier with support $[-1,+1]$ and define $\eta_\varepsilon (t) = \frac{\eta(t/\varepsilon)}{\varepsilon}$.
If we apply the mollifier smoothing on $\varphi$, then we have the following result, which is enough for our purposes.

\begin{prp}\label{smooth}
	The function
	\[
	\varphi_\varepsilon (t):=\int_{-\infty}^{\infty} \eta_\varepsilon (t-s)\varphi(s)ds
	\]
	has the following properties:
	
	\begin{enumerate}
		\item $\varphi_\varepsilon$ is smooth. Moreover $\varphi_\varepsilon$ converges to $\varphi$ uniformly;
		\item $\varphi_\varepsilon(t)= \varphi (t)$ outside $[-\varepsilon, \varepsilon]$;
		\item $\varphi_\varepsilon(t)>\varphi (t) $ for $t \in (-\varepsilon,\varepsilon)$.
	\end{enumerate}
\end{prp}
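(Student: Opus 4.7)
The strategy is to verify the three properties separately, using standard convolution-with-mollifier arguments together with two features specific to $\varphi(t) = |t|$: it is globally $1$-Lipschitz, and it is linear on each half-line $\{t > 0\}$ and $\{t < 0\}$. Throughout the argument I would exploit the symmetry of the standard mollifier, i.e.\ $\eta(-t) = \eta(t)$, which implies $\int \eta_\varepsilon(u)\,u\,du = 0$ together with the normalization $\int \eta_\varepsilon = 1$.

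For property (1), smoothness of $\varphi_\varepsilon$ is the classical differentiation-under-the-integral argument: since $\eta_\varepsilon \in C_c^\infty(\mathbb{R})$ and $\varphi$ is locally integrable, every derivative of $\varphi_\varepsilon$ is obtained by transferring derivatives onto $\eta_\varepsilon$. Uniform convergence follows from the $1$-Lipschitz property of $\varphi$: since $\mathrm{supp}\,\eta_\varepsilon \subset [-\varepsilon, \varepsilon]$,
\[
|\varphi_\varepsilon(t) - \varphi(t)| \leq \int \eta_\varepsilon(t-s)\bigl||s|-|t|\bigr|\,ds \leq \int_{-\varepsilon}^{\varepsilon} \eta_\varepsilon(u)\,|u|\,du \leq \varepsilon,
\]
uniformly in $t \in \mathbb{R}$.

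For property (2), when $t > \varepsilon$ the integrand defining $\varphi_\varepsilon(t)$ is supported in $s \in [t-\varepsilon, t+\varepsilon] \subset (0, +\infty)$, so $\varphi(s) = s$ there. The change of variable $u = t-s$ together with the symmetry and normalization of $\eta_\varepsilon$ yields $\varphi_\varepsilon(t) = t = \varphi(t)$. The case $t < -\varepsilon$ is handled identically by reflection.

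The main obstacle is property (3): since $\varphi$ is only convex and not strictly convex, Jensen's inequality merely provides $\varphi_\varepsilon(t) \geq \varphi(t)$, not strict. The plan to recover strictness is to write, via the substitution $u = t-s$ and the symmetry identity $\int \eta_\varepsilon(u)\sgn(t)(u+t)\,du = \sgn(t)\,t = |t|$,
\[
\varphi_\varepsilon(t) - |t| = \int_{-\varepsilon}^{\varepsilon} \eta_\varepsilon(u) \bigl(|u+t| - \sgn(t)(u+t)\bigr)\,du
\]
for $t \neq 0$ with $|t| < \varepsilon$. The integrand is pointwise nonnegative by the elementary inequality $|x| \geq \sgn(t)\,x$, and vanishes exactly when $\sgn(u+t) = \sgn(t)$. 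When $|t| < \varepsilon$ the set $\{u \in (-\varepsilon, \varepsilon) : \sgn(u+t) \neq \sgn(t)\}$ is a nonempty open subinterval on which $\eta_\varepsilon$ is strictly positive, so the integral is strictly positive. The case $t = 0$ is immediate since $\varphi_\varepsilon(0) = \int \eta_\varepsilon(u)|u|\,du > 0 = |0|$.
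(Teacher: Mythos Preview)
Your proof is correct and follows essentially the same approach as the paper: for (2) you both exploit that $\varphi$ is linear on each half-line together with the symmetry and normalization of $\eta_\varepsilon$, and for (3) your integrand $|u+t|-\sgn(t)(u+t)$ is, after undoing the substitution $u=s-t$, exactly the paper's comparison $|s|>s$ on a set of positive measure where $\eta_\varepsilon(t-s)>0$. The only difference is cosmetic---you supply more detail on uniform convergence and separate out $t=0$, while the paper dispatches (1) as ``classical'' and handles $t\in[0,\varepsilon)$ in one line.
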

\noindent{\bf Proof:}
	
	Item (1) is a classical result;
	
	Item (2): Suppose that $t \in [\varepsilon,\infty)$ (the case $t \in (-\infty,\varepsilon]$ is analogous).
	Then
	\begin{eqnarray}
	\varphi_\varepsilon (t)
	& = & \int_{t-\varepsilon}^{t+\varepsilon} \eta_\varepsilon (t-s)\varphi(s)ds
	= \int_{t-\varepsilon}^{t+\varepsilon} \eta_\varepsilon (t-s) s ds \nonumber \\
	& = & \int_{t-\varepsilon}^{t+\varepsilon} \eta_\varepsilon (t-s) s ds
	= \int_{-\varepsilon}^{\varepsilon} \eta_\varepsilon (s) (t-s) ds = t. \nonumber
	\end{eqnarray}
	
	Item (3): Suppose that $t\in [0,\varepsilon)$ (the case $t \in (-\varepsilon,0]$ is analogous).
	Then
	\begin{eqnarray}
	\varphi_\varepsilon (t)
	& = & \int_{t-\varepsilon}^{t+\varepsilon} \eta_\varepsilon (t-s)\varphi(s)ds
	> \int_{t-\varepsilon}^{t+\varepsilon} \eta_\varepsilon (t-s) s ds = t \nonumber
	\end{eqnarray}
	where the strict inequality holds because $\mathrm{supp}\eta_\varepsilon=[-\varepsilon,\varepsilon]$ and $\varphi (s) > s $ in a set of positive measure.
$\qedsymbol $

Finally we can use the graph of $\varphi_\varepsilon$ as the boundary of domains that approximate $\Omega$ uniformly.

\medskip

\medskip




\end{document}